\newtheorem*{theorem0}{Theorem 4.7}
\newtheorem{thm}{Theorem}[section]
\newtheorem{lemma}[thm]{Lemma}
\newtheorem{prop}[thm]{Proposition}
\newtheorem{corollary}[thm]{Corollary}
\definecolor{ultragreen}{RGB}{24,120,50} 
\newtheorem{example}[thm]{\color{ultragreen}Example}
\newtheorem{conj}[thm]{Conjecture}
\newtheorem{defn}[thm]{Definition}
\newtheorem{rmk}[thm]{Remark}
\newtheorem{obs}[thm]{Observation}
\newtheorem{thmn}{Theorem}
\newtheorem*{rep@theorem}{\rep@title}
\newcommand{\newreptheorem}[2]{%
\newenvironment{rep#1}[1]{%
 \def\rep@title{#2 \ref{##1}$'$}%
 \begin{rep@theorem}}%
 {\end{rep@theorem}}}
\newcommand{\qbinom}{\genfrac{[}{]}{0pt}{}}
\newcommand{\Vol}{\mathbf{Vol}}
\newcommand{\target}{\color{red}\mathrel{\bigcirc\hspace{-2.9mm}\bullet}}
\newcommand{\dicircle}{\mathbin{\mathpalette\make@circled\searrow}}
\newcommand{\make@circled}[2]{%
  \ooalign{$\m@th#1\smallbigcirc{#1}$\cr\hidewidth$\m@th#1#2$\hidewidth\cr}%
}
\newcommand{\smallbigcirc}[1]{%
  \vcenter{\hbox{\scalebox{1.4}{$\m@th#1\bigcirc$}}}%
}
\numberwithin{thm}{section}
\newcommand{\close}{\circlearrowright\! }
\newcommand{\CLP}{\mathrm{P_{CL}}}
\newcommand{\CL}{\mathrm{CL}}
\newcommand{\HL}{\mathrm{CL}}
\newcommand{\rmm}{\mathfrak{M}_q}
\newcommand{\drm}{\mathfrak{M}_q}
\newcommand{\rank}{\mathfrak{R}}
\definecolor{coral}{RGB}{245, 93, 159}
\definecolor{magenta}{RGB}{164, 93, 245}
\definecolor{teal}{RGB}{28, 157, 186}
\definecolor{pea}{RGB}{118, 186, 28}
\newcommand{\definition}[1]{{\color{coral}\emph{#1}}}
\newcommand{\ezgi}[1]{\todo[inline,color=purple!30]{#1 \\ \hfill --- Ezgi}}
\newcommand{\yalim}[1]{\todo[inline,color=green!30]{#1 \\ \hfill --- Yalım}}
\title{Chainlink Polytopes and Ehrhart-Equivalence}
\author[E. Kantarcı Oğuz]{Ezgi Kantarcı Oğuz}
\address{Galatasaray University}\thanks{EKO was partially supported by Tübitak BİDEP 2218-121C385.}
\email{ezgikantarcioguz@gmail.com}
\author[C. Y. Özel]{Cem Yalım Özel}
\address{Boğaziçi University}
\email{yalim98@gmail.com}
\author[M. Ravichandran]{Mohan Ravichandran}
\address{Boğaziçi University}
\email{mohan.ravichandran@gmail.com}
\thanks{MR gratefully acknowledges financial support from the Bogazici Solidarity fund.}
\begin{document}
\maketitle

\begin{abstract}
We introduce a class of polytopes that we call chainlink polytopes and which allow us to construct infinite families of pairs of non isomorphic rational polytopes with the same Ehrhart quasi-polynomial. Our construction is based on circular fence posets, which admit a non-obvious and non-trivial symmetry in their rank sequences that turns out to be reflected in the polytope level. We introduce the related class of chainlink posets and show that they exhibit the same symmetry properties. We further prove an outstanding conjecture on the unimodality of circular rank polynomials.
\end{abstract}

\section{Introduction}

This paper is about a class of polytopes, that naturally arise in poset theory, specifically in the study of fence posets and related objects. They are easy to describe, pliable to study, possess certain unexpected properties and throw up several puzzles.These polytopes will be indexed by compositions; let $\bar{a} = (a_1, \ldots, a_s)$ be a composition of $n$ and let $l$ be a non-negative integer. The \definition{chainlink polytope} $\CL(\bar{a},l)$ with \definition{chain composition} $\bar{a}$ and \definition{link number} $l$ is defined to be the polytope:
\[\CL(\bar{a},l)= \{x \in \mathbb{R}^s \mid 0 \leq x_i \leq a_i,  \, x_{i} - x_{i+1 \,(\operatorname{mod} s)} \leq a_{i}-l, \, i \in [s]\}.\]
This is a polytope that naturally lies in $\mathbb{R}^s$ and has a maximum of $3s$ facets. When the link number $l$ is equal to zero, the second set of constraints become redundant and the polytope becomes a cuboid,
\[\CL(\bar{a},0)= [0, a_1] \times [0, a_2] \times \ldots \times [0, a_s].\]
When the link number is larger, new facets emerge. For an example, see Figure \ref{fig:ChainlinkFirst}.

\begin{figure}[ht]
\includegraphics[width = 0.6\linewidth]{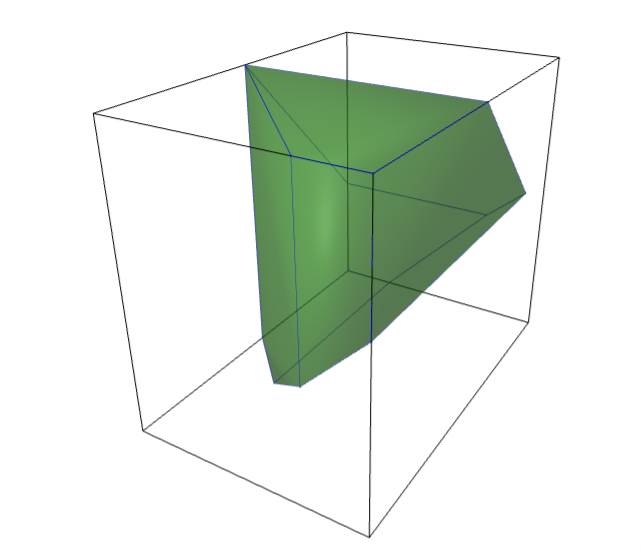}
\caption{The polytope $\CL(\bar{a}= (6, 4, 5), \, l = 2)$}
\label{fig:ChainlinkFirst}
\end{figure}

We will also work with certain special sections of these chainlink polytopes. For a positive real number $t$, we define \[\HL^t(\bar{a},l)=\HL(\bar{a},l) \cap \{x_1 + \ldots + x_s = t\}.\]

The polytopes $CL^t(\bar{a}, l)$ are non-empty for $t \in [0, n]$, where $n = a_1 + \ldots + a_s$. One of the main results in this paper is the following (unexpected) symmetry property of these sections of chainlink polytopes.

\begin{theorem0} \label{thm:vol} Let $\bar{a} = (a_1, \ldots, a_s)$ be a composition of $n$, let $l$ be a positive integer such that $2l \leq \operatorname{min} \{a_i\}_{i \in [s]}$ and let $t$ be a positive integer. Then complementary sections of the chainlink polytope have the same volume,
 \[|\HL^t(\bar{a},l)| = |\HL^{n-t}(\bar{a},l)|,\]
 where $|P|$ for a polytope denotes the relative volume. 
\end{theorem0}
There will be no ambiguity in the definition of the relative volume for us. All our polytopes will lie on hyperplanes of the form $\{x_1 + \ldots + x_s = t\}$ and we will work with the volume form that assigns volume $1$ to the polytope $\mathcal{P} = \operatorname{conv}\{0, e_1 - e_2, e_1 - e_3, \ldots, e_1 - e_s\}$.

This theorem is a special case of the following more general theorem. The terms used will be formally defined in the next section. 

\begin{thmn}  Let $\bar{a} = (a_1, \ldots, a_s)$ be a composition of $n$, let $l$ be a non-negative integer such that $2l \leq \operatorname{min} \{a_i\}_{i \in [s]}$ and let $t$ be a positive integer. Then complementary sections of the chainlink polytope have the same Ehrhart quasipolynomial,
 \[\operatorname{Ehr}\, \HL^t(\bar{a},l) = \,\operatorname{Ehr}\,\,\HL^{n-t}(\bar{a},l).\]
\end{thmn}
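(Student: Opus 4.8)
The plan is to reduce the statement, in three steps, to a reversal symmetry of the chainlink construction, which is where all the difficulty sits.

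\emph{Step 1: a unimodular equivalence swapping $t$ with $n-t$ at the price of reversing $\bar a$.} Write $\bar a^{\mathrm{rev}}=(a_s,a_{s-1},\dots,a_1)$ and define $\phi\colon\RR^s\to\RR^s$ by $\phi(x)_i=a_{\,s+1-i}-x_{\,s+1-i}$, i.e.\ complement each coordinate and then reverse the coordinate order. The linear part of $\phi$ is $-R$, where $R$ is the reversal permutation matrix, so $\phi$ is an affine map preserving $\ZZ^s$; it sends $\{x_1+\dots+x_s=t\}$ onto $\{x_1+\dots+x_s=n-t\}$; and a term-by-term check of the $3s$ defining inequalities shows $\phi\bigl(\CL(\bar a,l)\bigr)=\CL(\bar a^{\mathrm{rev}},l)$ (box inequalities are permuted among themselves, and so are the $s$ chain inequalities). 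Hence $\phi$ restricts to a unimodular equivalence $\CL^{t}(\bar a,l)\xrightarrow{\sim}\CL^{\,n-t}(\bar a^{\mathrm{rev}},l)$, so these two polytopes share the same Ehrhart quasipolynomial; assuming $0\le t\le n$ (otherwise both polytopes are empty and the statement is vacuous), the theorem becomes equivalent to the reversal-invariance statement $\operatorname{Ehr}\,\CL^{t}(\bar a,l)=\operatorname{Ehr}\,\CL^{t}(\bar a^{\mathrm{rev}},l)$ for every $t$ — which is the polytope manifestation of palindromicity of the associated circular rank polynomials.

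\emph{Step 2: from a reversal to adjacent swaps.} Permuting the entries of the chain composition does not change $\min_i a_i$, so the hypothesis $2l\le\min_i a_i$ survives any rearrangement; and $\CL(\bar a,l)$ is carried to $\CL(\sigma\bar a,l)$ by a coordinate permutation whenever $\sigma$ is a cyclic rotation. Since a reversal permutation is a product of transpositions of adjacent positions, the identity of Step 1 follows once we know that $\operatorname{Ehr}\,\CL^{t}$ is unchanged when two cyclically adjacent entries of $\bar a$ are interchanged.

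\emph{Step 3: the swap symmetry — where the difficulty lies.} This last statement is exactly the polytope shadow of the non-obvious symmetry of the rank sequences of circular fence posets. Indeed $m\cdot\CL^{t}(\bar a,l)=\CL^{mt}(m\bar a,ml)$ (scale every defining inequality by $m$), so $\operatorname{Ehr}\,\CL^{t}(\bar a,l)(m)$ is the coefficient of $q^{mt}$ in $\sum_{x\in\CL(m\bar a,ml)\cap\ZZ^s} q^{x_1+\dots+x_s}$, and this generating polynomial is a specialization of the rank-generating function of the chainlink poset attached to $(m\bar a,ml)$; under this dictionary ``interchange two adjacent chain entries'' becomes a transposition of adjacent segment lengths, and the required invariance is precisely the rank-sequence symmetry for chainlink/circular fence posets developed in the earlier sections, so transporting it back finishes the proof. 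I expect Step 3 to carry essentially all the weight: it is the only place where the hypothesis $2l\le\min_i a_i$ is genuinely used (for larger $l$ the extra facets break the symmetry), and proving the swap-invariance appears to need the explicit combinatorial bijection behind the circular fence poset symmetry rather than any soft polytopal argument. (As a cross-check one can also write $\sum q^{x_1+\dots+x_s}$ over $\CL(\bar a,l)\cap\ZZ^s$ as a transfer-matrix trace $\operatorname{tr}(Q_1A_1\cdots Q_sA_s)$, with $Q_i$ the diagonal $q$-weight matrices and $A_i$ the $\{0,1\}$ matrices recording the inequalities $x_i-x_{i+1}\le a_i-l$; inserting antidiagonal flip matrices between consecutive factors turns $q^{n}$ times its value at $1/q$ into the same trace for $\bar a^{\mathrm{rev}}$, which merely re-derives Step 1 and does not bypass Step 3.)
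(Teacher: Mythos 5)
Your Step 1 is correct and is a genuinely nice reformulation: the affine unimodular map $\phi(x)_i=a_{s+1-i}-x_{s+1-i}$ does carry $\CL^{t}(\bar a,l)$ onto $\CL^{\,n-t}(\bar a^{\mathrm{rev}},l)$, so the theorem is equivalent to reversal-invariance of $\operatorname{Ehr}\,\CL^{t}$ in the chain composition. The gap is in Steps 2--3: the statement you reduce to there, namely that $\operatorname{Ehr}\,\CL^{t}(\bar a,l)$ is unchanged when two cyclically adjacent entries of $\bar a$ are interchanged, is false. Take $l=1$ and compare $\bar a=(2,2,3,3)$ with its adjacent swap $(2,3,2,3)$ (same $n$, same minimum, so the hypothesis $2l\le\min_i a_i$ holds for both): direct enumeration gives $97$ lattice points in $\CL((2,2,3,3),1)$ but $98$ in $\CL((2,3,2,3),1)$; equivalently, with the paper's transfer matrices at $q=1$, $U=\left(\begin{smallmatrix}1&1\\0&1\end{smallmatrix}\right)$ and $B=\left(\begin{smallmatrix}2&1\\1&1\end{smallmatrix}\right)$, one has $\operatorname{tr}(B\cdot B\cdot UB\cdot UB)=97\neq 98=\operatorname{tr}(B\cdot UB\cdot B\cdot UB)$. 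Since the totals differ, some section $\CL^{t}$ has different lattice counts, so already the Ehrhart quasipolynomials evaluated at $1$ disagree. The Ehrhart data depends on the cyclic word $\bar a$ up to rotation and full reversal, but not up to arbitrary adjacent transpositions, so a reversal cannot be reached one swap at a time while preserving the invariant; writing the reversal as a product of adjacent transpositions does not help. Relatedly, your Step 3 misidentifies the needed input: the symmetry proved for circular fence/chainlink posets is palindromicity of the rank polynomial (the coefficient of $q^{t}$ equals that of $q^{\,n-t}$), which is not a statement about permuting the parts of the composition.

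For comparison, the paper's proof keeps the two ingredients you already have and skips the swap reduction entirely: using $m\,\CL(\bar a,l)=\CL(m\bar a,ml)$, the value $\operatorname{Ehr}\,\CL^{t}(\bar a,l)(m)$ is the coefficient of $q^{mt}$ in the rank polynomial of the chainlink poset $\CLP(m\bar a,ml)$, and that rank polynomial is shown to be palindromic by writing it as $\operatorname{tr}(U^{d_1}B\,U^{d_2}B\cdots U^{d_s}B)$ and running Cayley--Hamilton recurrences for $U$, $B$ and $BU$, whose traces and determinants are symmetric $q$-polynomials with known centers. Palindromicity applied to the dilate immediately gives $\operatorname{Ehr}\,\CL^{t}=\operatorname{Ehr}\,\CL^{\,n-t}$; combined with your Step 1 it also yields the reversal-invariance you were after, but that invariance has to be obtained globally (e.g.\ from palindromicity, or from a trace identity for reversed words in the two matrices $U,B$), not swap by swap.
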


To see why this is unexpected, consider the chainlink polytope $CL((6, 4, 5), 2)$ as above. The sections at $t = 4$ and $t = 11$ have the same volume, but are non-isomorphic. We plot the first one on the triangular lattice, the natural choice given that these lie on the hyperplanes $x + y + z = \operatorname{const}$. 

\vspace{0.1in}
\begin{center}
\begin{figure}[ht]
\usetikzlibrary{calc}
\newcommand*\rows{4}
\begin{tikzpicture}
    \foreach \row in {0, 1, ...,4} {
        \draw[gray] ($(\row*.5, {\row*0.5*sqrt(3)})$) -- ($(4-\row*.5,{ \row*0.5*sqrt(3)})$);
        \draw[gray] ($\row*(1, 0)$) -- ($(\rows/2,{\rows/2*sqrt(3)})+\row*(0.5,{-0.5*sqrt(3)})$);
        \draw[gray] ($\row*(1, 0)$) -- ($(0,0)+\row*(0.5,{0.5*sqrt(3)})$);
    }
        \fill[red] (1,0)  circle[radius=2pt];
    \fill[red] (3,0)  circle[radius=2pt];
    \fill[red] (3.75,0.43)  circle[radius=2pt];
    \fill[red] (2,3.46)  circle[radius=2pt];
    \fill[red] (1,1.73)  circle[radius=2pt];

    \draw[red, very thick] (1,0) -- (3, 0) -- (3.75, 0.43) -- (2, 3.46) -- (1, 1.73) -- (1, 0);
\end{tikzpicture} \quad \quad
\begin{tikzpicture}
    \foreach \row in {0, 1, ...,4} {
        \draw[gray] ($(2.75*2.18181818-.88+\row*.5, -{\row*0.5*sqrt(3)})$) -- ($(2.75*2.18181818-.88+4-\row*.5,-{ \row*0.5*sqrt(3)})$);
        \draw[gray] ($(\row+2.75*2.18181818-.88, 0)$) -- ($(+2.75*2.18181818-.88+\rows/2,-{\rows/2*sqrt(3)})+\row*(0.5,-{-0.5*sqrt(3)})$);
        \draw[gray] ($(\row+2.75*2.18181818-.88, 0)$) -- ($(2.75*2.18181818-.88,0)+\row*(0.5,-{0.5*sqrt(3)})$);
    }
    \fill[blue] ($(2.75*2.18181818+1.12, -{2*sqrt(3)})$) circle[radius=2pt];
    \fill[blue] ($(2.75*2.72727273+1.12,{2.75*1.57272727-1.71-2*sqrt(3)})$) circle[radius=2pt];
    \fill[blue] ($(2.75*1.63636364+1.12, {2.75*1.57272727-1.71-2*sqrt(3)})$) circle[radius=2pt];
    \fill[blue] ($(2.75*1.63636364+1.12, {2.75*1.88727273-1.71-2*sqrt(3)})$) circle[radius=2pt];
    \fill[blue] ($(2.7*2.18181818+1.12, {2.75*1.88727273-1.71-2*sqrt(3)})$) circle[radius=2pt];

    \draw[blue, very thick] ($(2.75*2.18181818+1.12, -{2*sqrt(3)})$)--
   ($(2.75*2.72727273+1.12,{2.75*1.57272727-1.71-2*sqrt(3)})$)-- 
 ($(2.7*2.18181818+1.12, {2.75*1.88727273-1.71-2*sqrt(3)})$)--
 ($(2.75*1.63636364+1.12, {2.75*1.88727273-1.71-2*sqrt(3)})$)--
($(2.75*1.63636364+1.12, {2.75*1.57272727-1.71-2*sqrt(3)})$)--
($(2.75*2.18181818+1.12, -{2*sqrt(3)})$);
\end{tikzpicture}
\caption{The sections \color{red}$CL^4(\bar{a}, l)$ \color{black}  
 and \color{blue} $CL^{11}(\bar{a}, l)$\color{black} where $\bar{a} = (6, 4, 5)$ and $l = 2$.}
\end{figure}
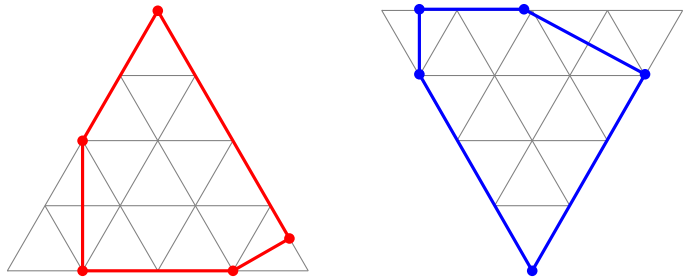
\end{center}

\begin{rmk} The theorem may fail when $2l > \operatorname{min} \{a_i\}_{i \in [s]}$. For instance, take our running example $\bar{a} = (6, 4, 5)$, but with $l = 3$ rather than $2$. Let $t = 7$ with complementary section corresponding to $n - t = 15 - 7 = 8$. In this case, we have that even the number of lattice points (the Ehrhart quasi polynomials evaluated at $1$) are different, 
\[\# \CL^t(\bar{a}, l) = 9, \qquad \# \CL^{n - t}(\bar{a}, l) = 10,\]
as are the volumes. We have that 
\[| \CL^t(\bar{a}, l)| = \dfrac{71}{12} \sqrt{3}, \qquad |\CL^{n - t}(\bar{a}, l)| = 6\sqrt{3}.\]

\end{rmk}

We do not have a conceptual explanation for why we may lose symmetry when $2l > \operatorname{min} \{a_i\}_{i \in [s]}$. However, in Section \ref{sec:3}, we will closely relate these polytopes with a natural class of posets, the so called \emph{circular fence posets} and this perspective will provide some insight into this phenomenon. 

\section{Background}
Where do these chainlink polytopes come from? At first sight, they (might perhaps) seem unmotivated, if (again perhaps) natural. We were led to these following the paper by the first and third authors \cite{main} on fence posets and in particular, a tricky problem that they had been unable to solve. We first recall the definition of fence posets. 
\begin{defn}
Given a composition $\bar{c} = (c_1, \ldots, c_k)$, the \definition{fence poset} is the poset on $n+1$ nodes, where $n = c_1 + \ldots + c_k + 1$ defined by the cover relations 
\[x_1 \prec x_2 \ldots \prec x_{c_1+1} \succ x_{c_1 + 2} \succ \ldots \succ x_{c_1 + c_2 + 1} \prec x_{c_1 + c_2 + 2} \prec \ldots.\]
\end{defn}
These posets arise in a number of contexts including cluster algebras, quiver representation theory and combinatorics. They also appeared in recent work of Morier-Genoud and Ovisenko \cite{morier2020continued}, where they introduced and studied a $q-$deformation of the rational numbers. In this same paper, the authors conjectured the following, which was proved by the first two authors.
\begin{thm}\label{thm:1}
The rank polynomials of fence posets are unimodal. 
\end{thm}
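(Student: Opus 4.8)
\section*{Proof proposal}

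The plan is to prove unimodality by induction on the number of segments of the composition, driven by a transfer-matrix recursion for the rank polynomial. Write $R(\bar c;q)=\sum_{I}q^{|I|}$, the order-ideal generating function of the fence $F_{\bar c}$, the sum ranging over down-closed sets $I$. Because a fence is a zigzag, an order ideal is exactly a $\{0,1\}$-labelling of the vertices that is nondecreasing along each up-run and nonincreasing along each down-run, so $R$ is computed by a transfer matrix whose state records the label of the boundary vertex shared by consecutive runs. Peeling off the last run of length $c$ then expresses the pair of \emph{conditional} polynomials $(A,B)$ for $\bar c$ (where $A$, resp.\ $B$, sums over ideals with that boundary vertex in, resp.\ out) in terms of the pair for the shortened composition via a $2\times 2$ transfer matrix whose entries are $q$-integers $1+q+\cdots+q^{m}$ times monomials. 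The base cases are single runs, whose rank polynomials are $q$-integers, and these are flatly unimodal.

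The central obstacle is that plain unimodality is not preserved by the operations occurring in this recursion. A sum of two unimodal polynomials, or a shifted sum, can easily fail to be unimodal, and products of unimodal polynomials are notoriously badly behaved. Consequently a naive induction collapses, and the real work is to isolate an inductive invariant that is strong enough to propagate itself yet still forces unimodality of $R=A+B$ at the end.

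Accordingly I would strengthen the hypothesis from ``$R$ is unimodal'' to a statement about the pair $(A,B)$: each of $A$ and $B$ is unimodal, the coefficientwise difference $A-B$ changes sign at most once, and the \emph{plateaus} of $A$ and $B$ (the index intervals on which the maximal coefficient is attained) are suitably nested. The crux is then a single closure lemma: the map $(A,B)\mapsto(A',B')$ induced by appending a run of length $c$ carries compatible pairs to compatible pairs. Proving this lemma is where the difficulty concentrates. Multiplication by a $q$-integer is an averaging/smoothing operation that widens and repositions a plateau, and one must show that after the ensuing shift-and-add the difference sequence of each output still changes sign exactly once; the cleanest route is a telescoping argument on successive coefficient differences, reducing everything to a finite case analysis on the relative positions of the two input plateaus.

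Finally I would dispose of the degenerate runs of length $1$, which collapse a vertex and are best handled as a separate reduction step of the recursion, verify the invariant on the base cases, and conclude that $R(\bar c;q)=A+B$ inherits unimodality from the compatible pair produced by the induction. The principal risk in this program is that the first guess for the invariant will not be closed under the recursion: tuning its precise form, and in particular the exact nesting condition imposed on the two plateaus, is the step I expect to iterate on most before the closure lemma goes through.
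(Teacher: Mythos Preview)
Your proposal is a research program, not a proof: the closure lemma that carries ``compatible pairs'' to ``compatible pairs'' under appending a run is the entire content of the argument, and you leave it unproven. You yourself flag that the invariant may need tuning before the lemma goes through; until it does, nothing has been shown. The difficulty you identify --- that unimodality is not closed under the sums, shifts, and convolutions with $q$-integers that arise here --- is precisely why this conjecture of Morier-Genoud and Ovsienko resisted direct attack, and there is no indication in your write-up that the specific invariant you propose (each of $A,B$ unimodal, $A-B$ single sign change, nested plateaus) actually survives even one step of the recursion.

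The route actually taken (in the work the paper cites for this theorem) is entirely different and does not attempt to close such a recursion. One introduces an auxiliary object, the \emph{circular} fence poset $\bar F(\bar c)$ obtained by identifying the two endpoints of the ordinary fence, and proves the a priori surprising fact that its rank polynomial is always \emph{symmetric}. Unimodality of the ordinary fence rank polynomial is then deduced from this circular symmetry together with recurrences linking circular and non-circular rank polynomials. The symmetry result is the engine; it replaces your hoped-for closure lemma with a concrete, provable statement, and its proof (whether the original inductive one or the later bijective one of Elizalde and Sagan) is itself intricate. That the successful argument had to detour through a new class of posets and an unexpected global symmetry is a strong signal that the direct transfer-matrix induction you sketch is genuinely hard to push through, not merely a matter of fine-tuning the plateau-nesting condition.
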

The main step the proof of this theorem involved the introduction of an ancillary class of posets, the so called \emph{circular fence posets}, and an unexpected property of these posets. 
\begin{defn}
Given an even length composition $\bar{c} = (c_1, \ldots, c_{2s})$, the circular fence poset $\bar{\mathcal{F}}(\bar{c})$ is the poset on $n$ nodes where $n = c_1 + \ldots + c_{2s}$, defined by the cover relations 
\[x_1 \prec x_2 \ldots \prec x_{c_1+1} \succ x_{c_1 + 2} \succ \ldots \succ x_{c_1 + c_2 + 1} \prec x_{c_1 + c_2 + 2} \prec \ldots \prec x_{1+\sum_{1}^{2s-1} c_i}  \succ  \ldots \succ x_{\sum_{1}^{2s} c_i} \succ x_1.\]
In other words, this is what we get by identifying the two end points a regular fence poset. 
\end{defn}

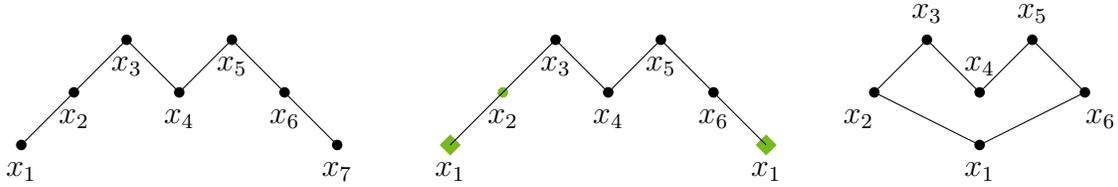
\begin{figure}[ht]
\begin{tikzpicture}[scale=.7]
\fill(0,0) circle(.1);
\fill(1,1) circle(.1);
\fill(2,2) circle(.1);
\fill(3,1) circle(.1);
\fill(4,2) circle(.1);
\fill(5,1) circle(.1);
\fill(6,0) circle(.1);
\draw (0,0)--(2,2)--(3,1)--(4,2)--(6,0);
\draw (0,-.5) node{$x_1$};
\draw (1,0.5) node{$x_2$};
\draw (2,1.5) node{$x_3$};
\draw (3,0.5) node{$x_4$};
\draw (4,1.5) node{$x_5$};
\draw (5,0.5) node{$x_6$};
\draw (6,-0.5) node{$x_7$};
\end{tikzpicture}\qquad 
\begin{tikzpicture}[scale=.7]
\fill[pea](-.2,0)--(0,-.2)--(.2,0)--(0,.2)--(-.2,0);
\fill[pea](1,1) circle(.1);
\fill(2,2) circle(.1);
\fill(3,1) circle(.1);
\fill(4,2) circle(.1);
\fill(5,1) circle(.1);
\fill[pea] (5.8,0)--(6,-.2)--(6.2,0)--(6,.2)--(5.8,0);
\draw (0,0)--(2,2)--(3,1)--(4,2)--(6,0);
\draw (0,-.5) node{$x_1$};
\draw (1,0.5) node{$x_2$};
\draw (2,1.5) node{$x_3$};
\draw (3,0.5) node{$x_4$};
\draw (4,1.5) node{$x_5$};
\draw (5,0.5) node{$x_6$};
\draw (6,-0.5) node{$x_1$};
\end{tikzpicture} 
\quad 
\begin{tikzpicture}[scale=.7]
\fill(1,1) circle(.1);
\fill(2,2) circle(.1);
\fill(3,1) circle(.1);
\fill(4,2) circle(.1);
\fill(5,1) circle(.1);
\fill(3,0) circle(.1);

\draw (3, 0)--(1, 1)--(2,2)--(3,1)--(4,2)--(5, 1)--(3, 0);
\draw (0.7,0.5) node{$x_2$};
\draw (2,2.5) node{$x_3$};
\draw (3,1.5) node{$x_4$};
\draw (4,2.5) node{$x_5$};
\draw (5.3,0.5) node{$x_6$};
\draw (3,-0.5) node{$x_1$};

\end{tikzpicture} 
\caption{The fence poset $F(2,1,1,2)$ (left) and two depictions of the circular fence poset $\bar{F}(2, 1, 1, 2)$(center and right). In the middle one, the two nodes marked $x_1$ are identified.}\label{fig:2112}
\end{figure}

In \cite{main}, the authors showed that circular fence posets satisfy an apriori unexpected property. 
\begin{thm}[\cite{main}]\label{thm:sym}
Rank polynomials of circular fence posets are symmetric. 
\end{thm}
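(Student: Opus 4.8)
Write $R_{\bar{c}}(q)=\sum_{I}q^{|I|}$ for the sum over order ideals $I$ of the circular fence poset $\bar{\mathcal{F}}(\bar{c})$, where $\bar{c}=(c_1,\dots,c_{2s})$ and $n=c_1+\dots+c_{2s}$; here ``symmetric'' means the palindrome identity $R_{\bar c}(q)=q^{n}R_{\bar c}(1/q)$. The plan is to evaluate $R_{\bar c}$ by a transfer matrix walking once around the cycle of $n$ elements, and then to exhibit a \emph{single} similarity transformation that exchanges the ``up'' and ``down'' transfer matrices; this identifies $R_{\bar c}(q)$ with $q^{n}R_{\bar c}(1/q)$ directly. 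For the transfer matrix, an ideal is recorded by its indicator vector on the $n$ cyclically arranged vertices, and downward closure is equivalent to the local condition that no up-edge $x\lessdot y$ has pattern $(x\notin I,\ y\in I)$ and no down-edge has the opposite pattern. With $W=\operatorname{diag}(1,q)$ weighting a vertex, $U=\left(\begin{smallmatrix}1&0\\1&1\end{smallmatrix}\right)$ the allowed-pattern matrix of an up-edge and $D=U^{\mathsf T}=\left(\begin{smallmatrix}1&1\\0&1\end{smallmatrix}\right)$ that of a down-edge, the standard bookkeeping gives
\[
R_{\bar c}(q)\;=\;\operatorname{tr}\!\bigl((WU)^{c_1}(WD)^{c_2}(WU)^{c_3}\cdots(WD)^{c_{2s}}\bigr),
\]
the $2s$ blocks alternating because the fence alternates up-runs and down-runs of lengths $c_1,c_2,\dots$, and the one $W$ per factor accounting for the $n$ vertices (the trace closing up the cycle).

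The first step I would take is to replace $q$ by $1/q$, which turns $W$ into $\operatorname{diag}(1,1/q)$, and then conjugate the trace by the swap matrix $J=\left(\begin{smallmatrix}0&1\\1&0\end{smallmatrix}\right)$ — which satisfies $J\operatorname{diag}(1,1/q)J=\tfrac1q W$, $JUJ=D$, $JDJ=U$ — pulling out a scalar $q^{-n}$ to get
\[
q^{n}R_{\bar c}(1/q)\;=\;\operatorname{tr}\!\bigl((WD)^{c_1}(WU)^{c_2}(WD)^{c_3}\cdots(WU)^{c_{2s}}\bigr),
\]
i.e.\ the same product with every $U$-block and $D$-block interchanged. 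The second step is to produce one matrix $S$ with $S(WU)S^{-1}=WD$ \emph{and} $S(WD)S^{-1}=WU$; conjugating the last trace by $S$ then turns it back into $R_{\bar c}(q)$, completing the argument. Such an $S$ must solve the over-determined system $S\cdot WU=WD\cdot S$ and $S\cdot WD=WU\cdot S$, which turns out to be consistent: $S=\left(\begin{smallmatrix}1-q&1\\ q&q-1\end{smallmatrix}\right)$ works (one checks both matrix equations by hand; note $S^2=(q^2-q+1)I$, so $S$ is conjugation by a scaled involution). All matrix identities live over $\mathbb{Q}(q)$ and are valid wherever $\det S=-(q^2-q+1)\neq 0$; since both sides of $R_{\bar c}(q)=q^{n}R_{\bar c}(1/q)$ are polynomials in $q$ agreeing at all but finitely many values, they are equal.

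The main obstacle is exactly the existence of that single intertwiner $S$. It is routine that $WU$ and $WD$ are conjugate — they share the characteristic polynomial $t^2-(1+q)t+q$ with distinct roots $1$ and $q$ — so \emph{some} conjugation sends $WU$ to $WD$ and, separately, \emph{some} conjugation sends $WD$ to $WU$; the genuine content is that one matrix does both, and checking that the combined linear system is solvable is the crux. Conceptually this is no accident: flipping every up-run of $\bar{\mathcal{F}}(\bar{c})$ to a down-run and vice versa realizes the dual poset $\bar{\mathcal{F}}(\bar c)^{\ast}$, whose order-ideal generating polynomial is the reciprocal $q^{n}R_{\bar c}(1/q)$, so the matrix statement ``had'' to be true and the proof merely makes the intertwiner explicit. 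A secondary point requiring care is the transfer-matrix bookkeeping itself — placing the weight matrices $W$ so that each of the $n$ cyclically arranged elements is counted exactly once and handling the wrap-around edge correctly — which is easy to pin down by testing small cases such as $\bar{\mathcal{F}}(1,1)$ (a $2$-chain, $R=1+q+q^2$) and $\bar{\mathcal{F}}(2,2)$ (the diamond, $R=1+q+2q^2+q^3+q^4$).
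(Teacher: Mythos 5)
Your proposal is correct, and I verified its two computational pillars: the cyclic transfer-matrix identity $R_{\bar c}(q)=\operatorname{tr}\bigl((WU)^{c_1}(WD)^{c_2}\cdots(WD)^{c_{2s}}\bigr)$ checks out on small cases (e.g.\ $\bar c=(2,1,1,2)$ gives $1+2q+3q^2+3q^3+3q^4+2q^5+q^6$), and the matrix $S=\left(\begin{smallmatrix}1-q&1\\ q&q-1\end{smallmatrix}\right)$ does satisfy both $S(WU)=(WD)S$ and $S(WD)=(WU)S$ with $\det S=-(q^2-q+1)\neq 0$, so the conjugation argument is valid over $\mathbb{Q}(q)$ and yields $R_{\bar c}(q)=q^nR_{\bar c}(1/q)$. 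This is, however, a genuinely different route from the paper's. The paper also expresses the rank polynomial as a trace of products of $2\times 2$ matrices (the rank matrices of oriented posets in Section \ref{MatFor}), but its symmetry mechanism is inductive: Lemma \ref{lem:characteristic} shows the traces and determinants of the building blocks are palindromic with prescribed centers, Lemma \ref{lem:charpol} propagates palindromicity through Cayley--Hamilton recurrences, and Theorem \ref{prop:symmetry} reduces everything to the symmetry of one small trace; Corollary \ref{cor:main} then gives the gate case $l=1$ directly, with arbitrary down-run lengths handled by the same technique using the down-step matrix $D$ of Section \ref{Sec:Unimodality}. You instead realize the substitution $q\mapsto 1/q$ as an explicit similarity: conjugation by $J$ swaps the up- and down-edge matrices and extracts $q^{-n}$, and the simultaneous intertwiner $S$ swaps them back, proving the palindromy in one stroke and treating all circular fence posets (arbitrary down-run lengths) uniformly; it also makes the duality with the dual poset conceptually transparent, which is a nice feature. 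What the paper's recurrence method buys in exchange is flexibility: the identical argument applies to the box matrices $B_{2\times l}$, hence to all chainlink posets with $l\geq 2$ --- exactly what the Ehrhart-equivalence result (Theorem \ref{thm:main}) needs --- and the same characteristic-polynomial identities drive the unimodality proof, whereas your intertwiner is tailored to the specific pair $(WU,WD)$ and an analogue for $U$ and $B_{2\times l}$ would require a separate construction.
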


Let us make a comment on why this result is unexpected. Given a composition $\bar{c} = (c_1, \ldots, c_{2s})$, let $\operatorname{shift}\bar{c}$ be the composition that is the cyclical shift of $\bar{c}$, that is $\operatorname{shift}\bar{c} = (c_{2s}, c_1, c_2, \ldots, c_{2s-1})$. A calculation shows that the symmetry of the rank polynomials of $\bar{F}(\bar{c})$ is equivalent to the statement that the posets $\bar{F}(\bar{c})$ and $\bar{F}(\operatorname{shift}\bar{c})$ have the same rank polynomial. It is also possible to see that this same rank symmetry may also be expressed as saying that the poset of lower ideals (our $\bar{F}(\bar{a})$) and the poset of upper ideals of the same fence poset have the same rank polynomal. However, except under very special cases, the two posets \emph{are not isomorphic}. For instance, take $\bar{c} = (2, 1, 1, 2)$ as in Figure \ref{fig:2112}. We have the following Hasse diagrams. The second fence can be seen as the vertical reflection of the first and we have labeled the elements appropriately. 

\begin{table}
\begin{tabular}{|c|c|c|}
\hline
Composition&Fence Poset & Hasse Diagram of Lattice of Lower Ideals\\
\hline 
\raisebox{2cm}{\begin{tabular}{c} $\bar{c}$\\ \\$(2, 1, 1, 2)$\end{tabular}} & \raisebox{.8cm}{\begin{tikzpicture}[scale=.7]
\fill(1,1) circle(.1);
\fill(2,2) circle(.1);
\fill(3,1) circle(.1);
\fill(4,2) circle(.1);
\fill(5,1) circle(.1);
\draw (0,0)--(2,2)--(3,1)--(4,2)--(6,0);
\draw (0,-.5) node{$1$};
\draw (1,0.5) node{$2$};
\draw (2,1.5) node{$3$};
\draw (3,0.5) node{$4$};
\draw (4,1.5) node{$5$};
\draw (5,0.5) node{$6$};
\draw (6,-0.5) node{$1$};
\fill[pea](5.8,0)--(6,-.2)--(6.2,0)--(6,.2)--(5.8,0);
\fill[pea](-.2,0)--(0,-.2)--(.2,0)--(0,.2)--(-.2,0);
\end{tikzpicture} }&
\begin{tikzpicture}[scale=1.1]
\fill(0,0) circle(.1);
\fill(1,1) circle(.1);
\fill(1,-1) circle(.1);
\fill(2,1) circle(.1);
\fill(2,0) circle(.1);
\fill(2,-1) circle(.1);
\fill(3,1) circle(.1);
\fill(3,0) circle(.1);
\fill(3,-1) circle(.1);
\fill(4,1) circle(.1);
\fill(4,0) circle(.1);
\fill(4,-1) circle(.1);
\fill(5,1) circle(.1);
\fill(5,-1) circle(.1);
\fill(6,0) circle(.1);
\draw (0,0)--(1,1);
\draw (0,0)--(1,-1);
\draw (1,1)--(2,1);
\draw (1,1)--(2,0);
\draw (1,1)--(2,-1);
\draw (1,-1)--(2,-1);
\draw (2,1)--(3,1);
\draw (2,0)--(3,0);
\draw (2,0)--(3,-1);
\draw (2,-1)--(3,1);
\draw (2,-1)--(3,-1);
\draw (3,1)--(4,1);
\draw (3,0)--(4,0);
\draw (3,0)--(4,-1);
\draw (3,-1)--(4,-1);
\draw (4,1)--(5,1);
\draw (4,0)--(5,-1);
\draw (4,0)--(5,1);
\draw (4,-1)--(5,-1);
\draw (5, 1)--(6, 0);
\draw (5, -1)--(6, 0);
\draw (0,0.3) node{$\varnothing$};
\draw (1,-1.3) node{$\{1\}$};
\draw (1,1.3) node{$\{4\}$};
\draw (2,-1.3) node{$\{1,4\}$};
\draw (2.2,0.4) node{$\{1,2\}$};
\draw (1.9,1.3) node{$\{1,6\}$};
\draw (3,-1.3) node{$\{1,2,6\}$};
\draw (3.3,0.4) node{$\{1,2,4\}$};
\draw (3,1.3) node{$\{1,4,6\}$};
\draw (4.3,-1.3) node{$\{1,2,4,6\}$};
\draw (4.9,0) node{$\{1,2,3,4\}$};
\draw (4.3,1.3) node{$\{1,4,5,6\}$};
\draw (6.1,-1.1) node{$\{1,2,3,4,6\}$};
\draw (6.1,1.1) node{$\{1,2,4,5,6\}$};
\draw (6.9,0.3) node{$\{1,2,3,4,5,6\}$};

\end{tikzpicture}\\
\hline
\raisebox{1.4cm}{
\begin{tabular}{c} $\operatorname{shift}\bar{c}$\\ \\$(2, 2, 1, 1)$\end{tabular} }
 & \raisebox{.8cm}{
 \begin{tikzpicture}[scale=0.7]
\fill(0,0) circle(.1);
\fill(1,1) circle(.1);
\fill(2,2) circle(.1);
\fill(3,1) circle(.1);
\fill(4,0) circle(.1);
\fill(5,1) circle(.1);
\fill(6,0) circle(.1);
\draw (0,0)--(2,2)--(4,0)--(5,1)--(6,0);
\draw (0,-.5) node{$3$};
\draw (1,0.5) node{$2$};
\draw (2,1.5) node{$1$};
\draw (3,0.5) node{$6$};
\draw (4,-0.5) node{$5$};
\draw (5,0.5) node{$4$};
\draw (6,-0.5) node{$3$};
\fill[pea](-.2,0)--(0,-.2)--(.2,0)--(0,.2)--(-.2,0);
\fill[pea](5.8,0)--(6,-.2)--(6.2,0)--(6,.2)--(5.8,0);
\end{tikzpicture}}&
\begin{tikzpicture}[scale=1.1]
\fill(0,0) circle(.1);
\fill(1,1) circle(.1);
\fill(1,-1) circle(.1);
\fill(2,1) circle(.1);
\fill(2,0) circle(.1);
\fill(2,-1) circle(.1);
\fill(3,1) circle(.1);
\fill(3,0) circle(.1);
\fill(3,-1) circle(.1);
\fill(4,1) circle(.1);
\fill(4,0) circle(.1);
\fill(4,-1) circle(.1);
\fill(5,1) circle(.1);
\fill(5,-1) circle(.1);
\fill(6,0) circle(.1);
\draw (0,0)--(1,1);
\draw (0,0)--(1,-1);
\draw (1,1)--(2,1);
\draw (1,1)--(2,0);
\draw (1,-1)--(2,0);
\draw (1,-1)--(2,-1);
\draw (2,1)--(3,1);
\draw (2,0)--(3,0);
\draw (2,-1)--(3,0);
\draw (2,-1)--(3,-1);
\draw (3,1)--(4,1);
\draw (3,0)--(4,0);
\draw (3,-1)--(4,-1);

\draw (3,-1)--(4,0);
\draw (3,1)--(4,-1);

\draw (4,1)--(5,1);
\draw (4,0)--(5,1);
\draw (4,-1)--(5,-1);
\draw (4,-1)--(5,1);

\draw (5, 1)--(6, 0);
\draw (5, -1)--(6, 0);

\draw (0,0.3) node{$\varnothing$};
\draw (1,-1.3) node{$\{5\}$};
\draw (1,1.3) node{$\{3\}$};
\draw (1.9,-1.3) node{$\{3,5\}$};
\draw (1.9,0.4) node{$\{5,6\}$};
\draw (2,1.3) node{$\{2,3\}$};
\draw (3,-1.3) node{$\{3,4,5\}$};
\draw (3.2,0.4) node{$\{3,5,6\}$};
\draw (3,1.3) node{$\{2,3,5\}$};
\draw (4.3,-1.3) node{$\{2,3,5,6\}$};
\draw (4.9,0) node{$\{3,4,5,6\}$};
\draw (4.3,1.3) node{$\{2,3,4,5\}$};
\draw (6.1,-1.1) node{$\{2,3,4,5,6\}$};
\draw (6.1,1.1) node{$\{1,2,3,5,6\}$};
\draw (6.9,0.3) node{$\{1,2,3,4,5,6\}$};

\end{tikzpicture}\\
\hline
\end{tabular}
\caption{Example showing that lattices of upper and lower ideals can be non-isomorphic (the inclusion in the Hasse diagrams given is in the direction left to right)}
\end{table}

A second, this time bijective proof of Theorem \ref{thm:sym} was given by Elizalde and Sagan in \cite{es22}. Interestingly, both proofs of this result are highly intricate and we felt it natural to seek a transparent proof of this basic result. We present such a proof in this paper, see Corollary \ref{cor:main}.

As mentioned above, in \cite{main},  the symmetry of the rank polynomials of circular fence posets was used to prove Theorem \ref{thm:1}, that rank polynomials of (regular) fence posets are unimodal. Generically, rank polynomials of circular fence posets seemed to be unimodal as well, though there are certain exceptions; a calculation shows that 
\[\overline{R}((1, 1, 1, 1);q) = 1 + 2q + q^2 + 2q^3 + q^4.\]
Extensive computer calculations however suggested the following conjecture. 
\begin{conj}[\cite{main}] \label{conj:unimodality}
The rank polynomial $\overline{R}(\bar{a};q)$ of a circular fence poset $\overline{F}(\bar{a})$ is unimodal except when $\bar{a} = (a, 1, a, 1)$ or $(1, a, 1, a)$ for some positive integer $a$. 
\end{conj}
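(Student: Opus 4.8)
The plan is to reduce the conjecture to the already-known unimodality of ordinary fence rank polynomials (Theorem~\ref{thm:1}), through a decomposition of the circular rank polynomial. First I would cut $\overline{F}(\bar c)$ at its seam to recover the ordinary fence $F = F(\bar c)$ on vertices $y_0, \dots, y_N$, $N = c_1 + \dots + c_{2s}$, where $y_0$ and $y_N$ are the two preimages of the glued node. An order ideal of $\overline{F}(\bar c)$ is exactly an order ideal $J$ of $F$ with $y_0 \in J \iff y_N \in J$, and it has cardinality $|J|$ when $y_0, y_N \notin J$ and $|J| - 1$ when $y_0, y_N \in J$ (the glued node being counted once). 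Splitting over the two cases and recording each piece as the rank polynomial of the ordinary fence left after the forced deletions gives
\[
  \overline{R}(\bar c; q) = q\, R\bigl(F_1; q\bigr) + R\bigl(F_2; q\bigr),
\]
where $F_1 = F(c_1 - 1, c_2, \dots, c_{2s-1}, c_{2s} - 1)$ and $F_2 = F(c_2 - 1, c_3, \dots, c_{2s-2}, c_{2s-1} - 1)$ (reading vanishing entries degenerately), and $F_2$ is obtained from $F_1$ by deleting its two outermost runs. By Theorem~\ref{thm:1} both $R(F_1; q)$ and $R(F_2; q)$ are unimodal, and by Theorem~\ref{thm:sym} --- equivalently, by the Ehrhart-quasipolynomial form of Theorem~4.7 at dilation~$1$ for the chainlink polytope attached to $\overline{F}(\bar c)$ --- the left-hand side is symmetric about $N/2$.

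It remains to prove that the sum of the two unimodal polynomials $q\, R(F_1; q)$ and $R(F_2; q)$ is again unimodal. Using the symmetry about $N/2$, this is equivalent to that sum being weakly increasing on $\{0, 1, \dots, \lfloor N/2 \rfloor\}$. Since $q\, R(F_1; q)$ is weakly increasing up to its mode and $R(F_2; q)$ is unimodal of strictly smaller degree, the sum can fail to increase only at an index where $R(F_2; q)$ has already peaked and is descending, and there one must check that the step-up of $q\, R(F_1; q)$ is at least the step-down of $R(F_2; q)$. So the crux is a coefficientwise comparison of consecutive differences of two fence rank polynomials, with $F_2 \subseteq F_1$. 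I would attack it by opening the proof of Theorem~\ref{thm:1} to extract lower bounds for the increments of $R(F_1; q)$ below its mode and matching upper bounds for the increments of $R(F_2; q)$ above its mode; the chainlink-polytope model --- in which these increments record how fast the number of lattice points of the sections $\CL^t$ grows with $t$ --- should make the comparison more transparent, and is where the relative-volume symmetry of Theorem~4.7 and Brunn--Minkowski-type concavity of slice volumes enter. One then expects the inequality to hold for every composition except the cyclic rotations of $(a, 1, a, 1)$.

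The exceptional family would be treated by hand. For $\bar c = (a, 1, a, 1)$ the decomposition specialises to
\[
  \overline{R}(\bar c; q) = q\, R\bigl(F(a-1, 1, a); q\bigr) + \bigl(1 + q + \dots + q^{a}\bigr),
\]
and here the plateau $R(F_2; q) = 1 + q + \dots + q^{a}$ steps down by $1$ just past degree $a$, precisely where the increment of $q\, R(F(a-1, 1, a); q)$ vanishes, so the sum dips at the centre $N/2 = a + 1$; this is the bimodal phenomenon exemplified by $\overline{R}((1,1,1,1); q) = 1 + 2q + q^2 + 2q^3 + q^4$ and $\overline{R}((2,1,2,1); q) = 1 + 2q + 3q^2 + 2q^3 + 3q^4 + 2q^5 + q^6$, while $(1, a, 1, a)$ has the same rank polynomial by Theorem~\ref{thm:sym}. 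The main obstacle is the coefficient comparison of the middle step: Theorem~\ref{thm:1} delivers unimodality but no quantitative control of \emph{where} a fence rank polynomial peaks or how steeply it climbs, and it is exactly this finer information --- together with the coupling between the climbs of $R(F_1)$ and the descents of $R(F_2)$ forced by the inclusion $F_2 \subseteq F_1$ --- that has to separate the generic unimodal behaviour from the $(a, 1, a, 1)$ obstruction. I expect the chainlink-polytope viewpoint, where a mode is the fattest lattice slice of a convex body, to be the natural environment for that estimate.
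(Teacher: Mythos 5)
There is a genuine gap, and it sits exactly at the point you flag as ``the crux.'' Your cut-at-the-seam decomposition $\overline{R}(\bar c;q)=q\,R(F_1;q)+R(F_2;q)$ is fine (it is essentially the reduction already exploited in \cite{main}), and the symmetry input is available; but everything then hinges on the coefficientwise inequality that the step-up of $q\,R(F_1;q)$ below the centre dominates the step-down of $R(F_2;q)$, and this is never proved --- it is only described as something to be extracted ``by opening the proof of Theorem~\ref{thm:1}.'' That extraction is not available: Theorem~\ref{thm:1} gives unimodality of fence rank polynomials but no control on where the mode sits or how large the consecutive differences are, and the needed inequality is \emph{false} for the exceptional family, so any argument must detect precisely when it fails; in the remaining hard cases (compositions with all parts in $\{1,2\}$, e.g.\ $(2,1,1,2,1,1)$ or all-ones) the relevant increments are $0$ or $1$ and the comparison is maximally delicate. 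The appeal to Brunn--Minkowski-type concavity of slice volumes does not help either: the quantities being compared are lattice-point counts of sections at dilation $1$, not volumes, and the very existence of the bimodal examples shows these counts need not be (log-)concave, so no such convexity principle can close the gap. In short, your route is the one \cite{main} followed to get necessary conditions for non-unimodality (reducing to compositions of the form \eqref{comp}), and it is exactly where that route stalled.

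For comparison, the paper closes the conjecture by a different mechanism: working with the $2\times 2$ transfer matrices $U$ and $D$, the Cayley--Hamilton-type identity $DUD=DU+UD-U+D^3-D^2$ yields the rank-polynomial recurrence (Id~1$'$), from which Proposition~\ref{prop:simplify} gives an induction step ``$\overline{R}(a-1,1,b-1,X)$ unimodal $\Rightarrow$ $\overline{R}(a,1,b,X)$ unimodal'' (using symmetry to handle the middle coefficients, and reducing the residual term to unimodality of $\overline{R}(a+b+2,X)$, which falls under the previously known cases). The proof of Theorem~\ref{unimodality} then finishes with a short case analysis on compositions whose parts are all $1$ or $2$. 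If you want to salvage your plan, you would need a quantitative strengthening of Theorem~\ref{thm:1} (location of the mode and lower bounds on increments of fence rank polynomials), which is substantially harder than the recurrence route and is not supplied by anything in the paper.
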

In this same paper, the authors were able to use the close connection between circular fences posets and regular fence posets to show that if $\bar{R}(\bar{a})$ is not unimodal, then the composition $\bar{a}$ must be of the form 
\begin{align}\label{comp}
\bar{a} = (a_1, \overbrace{1}^{b_1}, a_2, \overbrace{1}^{b_2}, \ldots, a_s, \overbrace{1}^{b_s}),
\end{align}
where any two entries larger than $1$ are seperated by at least one $1$. In other words $b_i$ are at least $1$.

While the authors were able to prove certain additional necessary conditions for non-unimodality, they were unable to settle the conjecture. For the purposes of this paper, we will mainly focus on the case where each $b_i=1$, so that we get a composition of the form:
\begin{align}\label{comp2}
\bar{a} = (a_1, 1, a_2, 1, \ldots, a_s, 1).
\end{align}
It is interesting that such compositions also play an important role in the bijective proof of symmetry of Elizalde and Sagan in \cite{es22}, where the authors refer to such circular fence posets as \definition{gate posets}. 

It turns out that ideals of circular fence posets coming from gate posets are precisely the lattice points in the chainlink polytope $\CL(\bar{a}, 1)$, where $\bar{a} = (a_1, a_2, \ldots, a_s)$. Further, the number of rank $k$ ideals is precisely the number of lattice points in $\CL^k(\bar{a}, 1)$. The symmetry of the rank polynomials of gates may then be written as 
 \[\#\CL^k(\bar{a}, 1) = \#\CL^{n-k}(\bar{a},1),\]
 where $n = a_1 + a_2 + \ldots + a_s + s$ and $\#P$ for a polytope $P$ is the number of lattice points contained in $P$. We will see in Proposition \ref{prop:vertices} that rank the polytopes $\CL^k(\bar{a}, 1)$ while not necessarily integral, are always half integral, in particular, rational. 
 
 We were naturally led to investigate whether the syntactic generalization 
 \begin{align}\label{stmt:Ehr}\operatorname{Ehr}\, \CL^k(\bar{a}, 1) = \operatorname{Ehr}\, \CL^{n-k}(\bar{a},1),
 \end{align}
 is true as well. Well, it is! And this is the content of the main theorem in this paper. The equality of Ehrhart quasipolynomials yields as a corollary the equality of volumes of these polytopes, see Theorem \ref{thm:main}.

Proving Theorem \ref{thm:main} needed several new ideas. Denote the Ehrhart polynomial $\operatorname{Ehr}\, \CL^k(\bar{a}, 1)$ by $f_k$. This polynomial evaluated at $1$ counts the number of ideals of size $k$ in a certain circular fence poset. When evaluated at other integers, say $f_k(m)$, we will show that the value can again be interpreted as the number of lower ideals of size $mk$ in a certain poset, which we call a \definition{chainlink poset}.  These posets share a familial resemblence to circular fence posets : They are introduced in Section \ref{sec:4}, where we also discuss the connections to circular fence posets. An example of a chainlink poset is given below.

\begin{figure}[H]
 \begin{tikzpicture}[scale=.7]
\fill(0,0) circle(.1);
\fill(1,1) circle(.1);
\fill(2,2) circle(.1);
\fill(3,1) circle(.1);
\fill(2,0) circle(.1);

\fill(4,2) circle(.1);
\fill(5,3) circle(.1);
\fill(6,4) circle(.1);
\fill(-1,-1) circle(.1);

\fill(6,2) circle(.1);
\fill(7, 3) circle(.1);
\draw (1,1)--(2,0)--(3,1);

\draw (-1, -1)--(0,0)--(2,2)--(3,1)--(4,2)--(5, 3)--(6, 4)--(7, 3)--(6, 2)--(5, 3);
\draw (-1,-1.5) node{$1$};

\draw (0,-.5) node{$2$};
\draw (1,0.5) node{$3$};
\draw (2,1.5) node{$4$};

\draw (3,0.5) node{$6$};
\draw (2,-0.5) node{$5$};

\draw (4,1.5) node{$7$};
\draw (5,2.5) node{$8$};
\draw (6,3.5) node{$9$};

\draw (6,1.5) node{$1$};
\draw (7,2.5) node{$2$};

\draw[magenta] (0,0) circle (6pt);
\draw[magenta] (0,0) circle (8pt);

\draw[magenta] (7,3) circle (6pt);
\draw[magenta] (7,3) circle (8pt);

\fill[pea](-1.2,-1)--(-1,-1.2)--(-.8,-1)--(-1,-.8)--(-1.2,-1);
\fill[pea](5.8,2)--(6,1.8)--(6.2,2)--(6,2.2)--(5.8,2);

\end{tikzpicture} 
\caption{A chainlink poset: $C((4, 5), 2)$}
\end{figure}
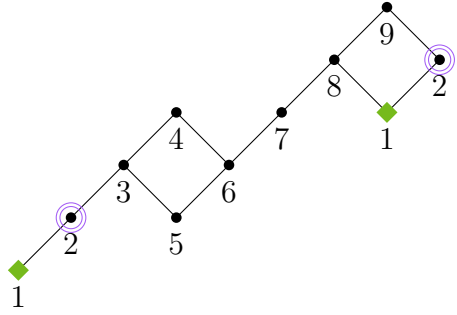

In this figure, cover relations are represented as usual by upward sloping lines : We have for instance that $5 \prec 3$ and $6 \prec 7$. When two vertices have the same label, this means that they are identified. The rank polynomial of the above chainlink poset $C((5, 4), 2)$ is $1 + 2q + 3q^2 + 3q^3 +3q^4 + 3q^5 + 3q^6 + 3q^7 + 2q^8 + q^9$. Note that this polynomial is symmetric. 

We will show that all chainlink posets have symmetric rank polynomials. The strategies for showing symmetry for circular fence posets in \cite{main, es22} do not carry over and we needed to approach the problem differently. The new ingredient is a linear algebraic approach coming from the theory of oriented posets (see  \cite{ops}) that arguably yields a transparent proof. We note that this yields a new (third) proof of rank symmetry for circular fences as well. 

This approach has as its starting point the following basic feature of fence posets: They can be built up by gluing chains in an iterative manner. We review in Section \ref{MatFor} how the rank polynomials of fence (and chainlink) posets can be computed by multiplying certain $2 \times 2$ matrices: The entries of these matrices are certain polynomials that  encode refined order relations. 

Using this approach has led to another felicitous consequence : We discovered new recurrences, that we could then use to prove Conjecture \ref{conj:unimodality}. We include a proof of this in Section \ref{Sec:Unimodality}. 

\section{Chainlink polytopes}\label{sec:3}
We repeat here the description of the chainlink polytopes. 
\begin{defn}
Let $\bar{a} = (a_1, \ldots, a_s)$ be a composition of $n$ and let $l \geq 0$. The \definition{chainlink polytope} $\CL(\bar{a}, l)$ is defined as 
\[\CL(\bar{a},l)= \{x \in \mathbb{R}^s \mid 0 \leq x_i \leq a_i, \, i \in [s], \, x_{i} - x_{i \,(\operatorname{mod} s)+1} \leq a_{i}-l, \, i \in [s]\}.\]
Given $t \in \mathbb{R}_{\geq 0}$, the $t$-section of the chainlink polytope is the polytope defined as 
\[\CL^t(\bar{a},l)=\CL(\bar{a},l) \cap \{x_1 + \ldots + x_s = t\}.\]
\end{defn}
We note here a basic integrality property of these polytopes. 
\begin{prop}\label{prop:vertices}
 Let $\bar{a} = (a_1, \ldots, a_s)$ be a composition of $n$, let $l, t \in \mathbb{Z}_{\geq 0}$. Then 
 \begin{itemize}
 \item The vertices of $\CL(\bar{a}, l)$ are integral.
\item Assume that $2l \leq \operatorname{min} \bar{a}$. Then for every integer $t$, the vertices of $\CL^t(\bar{a}, l)$ are either integral or half integral. 
 \end{itemize}
\end{prop}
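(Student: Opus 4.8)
The plan is to analyze the two statements separately, both via the standard fact that a vertex of a polytope defined by linear inequalities is the unique solution of the subsystem obtained by turning some $d$ (here $d = s$, resp. $d = s-1$ after restricting to the hyperplane) of the defining inequalities into equalities, and that such a vertex is determined by inverting the corresponding coefficient matrix. So everything reduces to controlling the determinants (and, in the second part, the cofactors) of the matrices that can arise.

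For the first bullet, a vertex $x$ of $\CL(\bar a, l)$ is cut out by $s$ of the $3s$ facet-defining inequalities made tight. The inequalities are of three types: $x_i = 0$, $x_i = a_i$, and $x_i - x_{i+1} = a_i - l$ (indices mod $s$). If some tight constraint is of the first two types, it fixes $x_i$ to an integer, and we may delete coordinate $i$; the remaining constraints of the third type then form a system whose coefficient matrix is a submatrix of the circulant "difference" matrix, which is totally unimodular (it is the incidence-type matrix of a directed graph, each row having one $+1$ and at most one $-1$). Hence all such vertices are integral. The only remaining case is when all $s$ tight constraints are of the third type, i.e. $x_i - x_{i+1} = a_i - l$ for every $i \in [s]$; summing these around the cycle forces $0 = \sum_i (a_i - l) = n - sl$, so this can only happen on a measure-zero locus, and when it does the system is rank $s-1$, not $s$, so it defines no vertex. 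First I would write this out carefully, being explicit that "at most one $-1$ per row" after deleting columns is exactly what keeps the matrix totally unimodular.

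For the second bullet, restrict to the hyperplane $H = \{x_1 + \ldots + x_s = t\}$; a vertex of $\CL^t(\bar a, l)$ is cut out by $H$ together with $s-1$ of the facet inequalities made tight. Again sort the $s-1$ tight facet constraints into the box-type ones (which fix coordinates to integers) and the difference-type ones. After using the box constraints to eliminate their coordinates, one is left with a square system consisting of: one row of all $1$'s (from $H$, with right-hand side an integer since the eliminated coordinates are integers), together with some difference-type rows $e_i - e_{i+1}$. The claim is that the determinant of any such matrix is $\pm 1$ or $\pm 2$, and correspondingly the unique solution is integral or half-integral. This is where the hypothesis $2l \le \min \bar a$ must enter — as the Remark shows, half-integrality genuinely fails without it — and this determinant/cofactor estimate is the main obstacle. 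The key combinatorial point I expect to use: the difference-type constraints correspond to arcs $i \to i+1$ on the cycle $\mathbb{Z}/s$; a collection of such arcs is either a proper subset of the cycle (a union of paths), in which case the corresponding rows together with the all-ones row form a unimodular system (determinant $\pm 1$, giving an integral vertex), or it is the \emph{entire} cycle (all $s$ arcs), in which case one of those rows is redundant with the all-ones row modulo the others — wait, more precisely the $s$ difference rows sum to $0$, so with the all-ones row we can have at most $s-1$ independent difference rows anyway; the determinant $2$ case arises from a different source. Here I would instead track it through Proposition-style reasoning: solve the path-systems explicitly (telescoping the equalities $x_i - x_{i+1} = a_i - l$ along each maximal run of tight difference constraints), observe that each coordinate is then pinned to a value of the form (integer) or (integer)$/2$ once the single linear relation from $H$ is imposed, and check that the possible denominator is exactly $2$ because at most the number of "free blocks" minus one linear equation is involved and the block-sums have integer or half-integer parity controlled by $2l \le \min a_i$. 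I would verify the half-integrality bound on small cases ($s = 2, 3$) first to pin down precisely which configurations produce the denominator $2$, then state the general claim as: the relevant coefficient matrix, after eliminating box-fixed coordinates, is (the all-ones row stacked on a union-of-paths arc-incidence matrix), whose determinant divides $2$; combined with integrality of all right-hand sides (using $2l \le \min a_i$ to handle the tight difference constraints $x_i - x_{i+1} = a_i - l$), Cramer's rule gives the result.
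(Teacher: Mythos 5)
Your argument for the first bullet is sound and in fact cleaner than the paper's: the full constraint matrix of $\CL(\bar a,l)$ is the directed-cycle incidence matrix (rows $e_i-e_{i+1}$) stacked with $\pm$identity rows, hence totally unimodular, so with integral right-hand sides every vertex is integral; the degenerate all-differences case is rank $s-1$ and contributes nothing. (The paper instead deduces integrality from Lemma \ref{vertexlemma}, which classifies vertex coordinates as lying in $\{0,l,a_i-l,a_i\}$ and needs $2l\le\min a_i$; your route avoids that hypothesis, which matches how the first bullet is actually stated.)

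The second bullet, however, has a genuine gap. Your central claim --- that after eliminating box-fixed coordinates the determinant of (all-ones row stacked on a union-of-paths arc matrix) divides $2$ --- is false. Take $s=3$ with tight constraints $x_1+x_2+x_3=t$, $x_1-x_2=a_1-l$, $x_2-x_3=a_2-l$: the coefficient matrix $\left(\begin{smallmatrix}1&1&1\\1&-1&0\\0&1&-1\end{smallmatrix}\right)$ has determinant $3$, and this is exactly the configuration producing the vertex $(8/3,5/3,2/3)$ of $\CL^5((3,3,3),2)$ in the paper's own remark. So the denominator is not controlled by the shape of the matrix alone; you must use $2l\le\min a_i$ to show that whenever two cyclically adjacent difference constraints are both tight, the coordinates involved are already forced to integers. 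Concretely: $x_i-x_{i+2}=(a_i-l)+(a_{i+1}-l)\ge a_i$ together with $x_i\le a_i$ and $x_{i+2}\ge 0$ pins $x_i=a_i$, $x_{i+2}=0$, $x_{i+1}=l$. You say the hypothesis ``must enter'' at this point but never supply this step, and without it the determinant bound you rely on is simply wrong. The paper takes a different route that packages this observation geometrically: Lemma \ref{vertexlemma} shows (using exactly the inequality above, plus a perturbation argument for the boundary case $2l=\min a_i$) that every edge of $\CL(\bar a,l)$ is parallel to some $e_i$ or $e_i+e_{i+1}$; a vertex of the slice is then an edge point $v+\alpha e_i$ or $v+\beta(e_i+e_{i+1})$ with $v$ integral, and the integrality of the coordinate sum forces $\alpha\in\mathbb Z$ or $\beta\in\tfrac12\mathbb Z$. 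Your approach can be repaired, but only by inserting the adjacent-tight-constraints analysis before invoking any determinant bound.
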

We prove this following \ref{vertexlemma}. 
Some remarks are in order. 
\begin{enumerate}
\item Half-integral vertices can indeed occur. For instance, the polytope $\CL^2((2, 2), 1)$, is given by 
\[0 \leq x_1 \leq 2, \quad 0 \leq x_2 \leq 2, \quad x_1 - x_2 \leq  1, \quad x_1 + x_2 = 2,\]
 has dimension $1$ and has as its' two vertices $(3/2, 1/2)$ and $(1/2, 3/2)$.
\item The condition on $l$ is necessary. Take for instance the polytope $\CL^5((3, 3, 3), 2)$. One may check that the point $(8/3, 5/3, 2/3)$ is a vertex of the polytope. 
\item It is possible to show that if the vertices of $\CL^t(\bar{a}, l)$ are all integral or half integral for every integer $l$, then we must have that $2l \leq \operatorname{min} \{a_i\}_{i \in [s]}$. 
\end{enumerate}

These polytopes are closely related to a class of posets we call \emph{Chainlink Posets} which we describe in Section~\ref{sec:4}.

\subsection{ Connections to other polytopes [Ezgi, Mohan]}
There are two well known polytopes associated to any poset, namely the order and chain polytopes. Recall that given a poset $\mathcal{P}$ on $[n]$, the order polytope is defined as 
\[\mathcal{O}(\mathcal{P}) := \{ x \in [0, 1]^n \mid \forall i \prec_{\mathcal{P}} j, \, \, x_i \leq x_j\}.\]
Recall that a chain in a poset is a totally ordered subset; let us denote the set of all chains in the poset $\mathcal{P}$ by $\operatorname{Ch}(\mathcal{P})$. The Chain polytope is defined as
\[\mathcal{C}(\mathcal{P}) := \{ x \in [0, 1]^n \mid \forall C \in \operatorname{Ch}(\mathcal{P}), \, \, \sum_{i \in C}  x_j \leq 1\}.\]
These polytopes are apriori quite distinct from each other. The vertices of the order polytope are easily seen to be in bijection with the set of upper ideals (upper closed subsets) of the poset $\mathcal{P}$; each upper ideal gives the vertex $a_I$, where 
\[a_{I}(i) = \begin{cases} 
1 & i \in I,\\
0 & \text{otherwise}
\end{cases}\]
The vertices of the chain polytope on the other hand are indexed by the anti-chains (subsets where any two distinct elements are incomparable). Each antichain $J$ yields the vertex $b_J$, where
\[b_{J}(j) = \begin{cases} 
1 & j \in J,\\
0 & \text{otherwise}
\end{cases}\]
Despite all this, one has the following delightful theorem of Stanley \cite{stanley1986two}. 
\begin{thm}[Stanley]
The order and chain polytopes of any poset have the same volume. 
\end{thm}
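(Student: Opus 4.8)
The plan is to exhibit an explicit piecewise-linear bijection between the order polytope $\mathcal{O}(\mathcal{P})$ and the chain polytope $\mathcal{C}(\mathcal{P})$ that is volume-preserving because it is unimodular on each piece. This is Stanley's original argument via the \emph{transfer map}. First I would fix a linear extension of $\mathcal{P}$ so as to be able to talk about maximal chains reaching down to a given element. Define $\varphi : \mathcal{O}(\mathcal{P}) \to \RR^n$ by
\[\varphi(x)_i = x_i - \max\{x_j \mid j \lessdot_{\mathcal{P}} i\},\]
with the convention that the maximum over the empty set is $0$ (so $\varphi(x)_i = x_i$ when $i$ is minimal). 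I would then check that $\varphi$ maps $\mathcal{O}(\mathcal{P})$ into $\mathcal{C}(\mathcal{P})$: each coordinate is nonnegative since $x$ is order-preserving, and for any chain $i_1 \prec i_2 \prec \cdots \prec i_k$ in $\mathcal{P}$ one telescopes $\sum_m \varphi(x)_{i_m} \leq x_{i_k} \leq 1$ (here one must be slightly careful: a chain in $\mathcal{P}$ need not be a saturated chain, but the inequality $\varphi(x)_{i_{m+1}} \leq x_{i_{m+1}} - x_{i_m}$ still holds because $i_m$ lies below some cover of $i_{m+1}$, or directly $\max\{x_j \mid j \lessdot i_{m+1}\} \geq x_{i_m}$ fails in general — so the cleaner route is to observe $\varphi(x)_{i_{m+1}} = x_{i_{m+1}} - \max\{x_j : j \lessdot i_{m+1}\}$ and reduce a general chain to a saturated refinement, along which the telescoping is immediate and the chain-sum constraint for the original chain is dominated).

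Next I would construct the inverse explicitly. Define $\psi : \mathcal{C}(\mathcal{P}) \to \RR^n$ by
\[\psi(y)_i = \max\Big\{ \sum_{m} y_{i_m} \ \Big|\ i_1 \prec i_2 \prec \cdots \prec i_k = i \text{ a chain in } \mathcal{P}\Big\}.\]
One checks $\psi(y) \in \mathcal{O}(\mathcal{P})$: order-preservation is immediate because any chain ending at $i$ extends to one ending at any $j \succ i$, and $\psi(y)_i \leq 1$ is exactly the chain-polytope constraint. Then $\varphi \circ \psi = \mathrm{id}$ and $\psi \circ \varphi = \mathrm{id}$ follow by unwinding the definitions and an induction on the linear extension. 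The key structural point is that both $\varphi$ and $\psi$ are piecewise-linear, and on each linearity region they are given by integer matrices that are \emph{unimodular}: $\varphi$ is (in an appropriate coordinate ordering by linear extension) unitriangular with $\pm 1$ entries, since $\varphi(x)_i = x_i - x_{j(i)}$ for a single index $j(i)$ selected on that region. A unimodular affine map preserves Lebesgue volume, and the linearity regions of $\varphi$ partition $\mathcal{O}(\mathcal{P})$ up to a measure-zero set (the loci where two candidate maxima tie), so $\Vol \mathcal{O}(\mathcal{P}) = \Vol \mathcal{C}(\mathcal{P})$.

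The main obstacle — and the only place real care is needed — is verifying that $\varphi$ is genuinely a bijection onto $\mathcal{C}(\mathcal{P})$ with the stated piecewise-linear inverse, rather than merely a volume-nonincreasing map; in particular one must confirm that the pieces on which $\varphi$ is linear are full-dimensional polytopes whose interiors are disjoint and whose union is all of $\mathcal{O}(\mathcal{P})$, so that no volume is lost or double-counted. Everything else (nonnegativity, the chain inequalities, the order inequalities, unitriangularity) is a short direct check. An alternative, if one wants to sidestep the piecewise analysis, is to prove instead that $\mathcal{O}(\mathcal{P})$ and $\mathcal{C}(\mathcal{P})$ have the same Ehrhart polynomial — equivalently the same number of lattice points after every dilation — by showing $\varphi$ restricts to a bijection $\frac{1}{N}\ZZ^n \cap \mathcal{O}(\mathcal{P}) \to \frac{1}{N}\ZZ^n \cap \mathcal{C}(\mathcal{P})$ for every $N$, which is cleaner combinatorially since $\varphi$ and $\psi$ manifestly preserve the property of having all coordinates in $\frac{1}{N}\ZZ$; the equality of leading Ehrhart coefficients then gives the equality of volumes. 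I would present the transfer-map proof as the main line and remark on the Ehrhart refinement.
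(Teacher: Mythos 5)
Your proposal is correct: it is Stanley's transfer-map argument, and all the checks you list (nonnegativity, the chain inequalities via telescoping, the explicit inverse $\psi$, piecewise unitriangularity) go through. One small point: the worry in your parenthetical is unfounded. For a general (not necessarily saturated) chain $i_1 \prec \cdots \prec i_k$, the inequality $\max\{x_j : j \lessdot i_{m+1}\} \geq x_{i_m}$ \emph{does} hold, because $i_m \prec i_{m+1}$ forces $i_m \preceq j$ for some $j \lessdot i_{m+1}$ and $x$ is order-preserving; so the direct telescoping works and no detour through a saturated refinement is needed (though that detour is also valid, using nonnegativity of the coordinates of $\varphi(x)$).

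Your route differs in emphasis from the one the paper sketches. The paper computes $|\mathcal{O}(\mathcal{P})| = \#\mathcal{L}(\mathcal{P})/n!$ by decomposing the order polytope into the simplices $\mathcal{S}(L)$ indexed by linear extensions, and then invokes the fact that the chain polytope admits a unimodular triangulation indexed by the same set, so both volumes equal $\#\mathcal{L}(\mathcal{P})/n!$. You instead build a single global piecewise-unimodular bijection $\varphi$ and avoid exhibiting the triangulation of $\mathcal{C}(\mathcal{P})$ explicitly. The two arguments are really two faces of the same proof — the transfer map is precisely what carries the canonical triangulation of $\mathcal{O}(\mathcal{P})$ onto a triangulation of $\mathcal{C}(\mathcal{P})$ — but each presentation buys something: the triangulation count gives the clean closed form $\#\mathcal{L}(\mathcal{P})/n!$ for both volumes, while your transfer map restricts to a bijection on $\frac{1}{N}\mathbb{Z}^n$-points of every dilate and hence yields the stronger statement that $\mathcal{O}(\mathcal{P})$ and $\mathcal{C}(\mathcal{P})$ are Ehrhart-equivalent, which is exactly the kind of refinement this paper cares about. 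The one place you rightly flag as needing care — that the linearity regions of $\varphi$ are full-dimensional, have disjoint interiors, and cover $\mathcal{O}(\mathcal{P})$ — is handled cleanly by noting that these regions are unions of the simplices $\mathcal{S}(L)$ (on the interior of each $\mathcal{S}(L)$ the coordinates are totally ordered, so all the argmaxes are determined), which ties your argument back to the paper's decomposition.
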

It is easy to get a combinatorial expression for the volume of the order polytopes. Any total order $L$ on $\mathcal{P}$ yields a simplex $\mathcal{S}(L)$ 
\[\mathcal{S}(L) =  \{ x \in [0, 1]^n \mid \forall i \prec_{L} j, \, \, x_i \leq x_j\},\]
and all these simplices are relatively disjoint. Consequently, if we let $\mathcal{L}(\mathcal{P})$ be the set of all linear extensions of $\mathcal{P}$, we have that
\[|\mathcal{O}(\mathcal{P})| = \dfrac{\# \mathcal{L}(\mathcal{P})}{n!}.\]
Stanley's proof exhibits a unimodular triangulation of the chain polytope as well, indexed by the linear extensions of $\mathcal{P}$.

\subsection{Polyhedral models for fence posets}
The first attempt at getting a polyhedral perspective on fence posets would be to consider the order polytope of the fence poset (call the fence poset $\mathcal{F}$ and the order polytope $\mathcal{O}(\mathcal{F})$ and note that the number of rank $k$ ideals is the number of lattice points in 
\[\mathcal{O}^k(\mathcal{F}) = \mathcal{O}(\mathcal{F}) \cap \{x_1 + \ldots + x_s = k\}.\]
However, though we have that $\mathcal{O}^k(\mathcal{F})$ and $\mathcal{O}^{n-k}(\mathcal{F})$ have the same number of lattice points, it is \emph{not true} that they have the same volume. 

 \begin{rmk}\label{rem:order}
 Consider the gate poset for the composition $\bar{c} = (3,1,2,1,1,1)$, let $P = \mathcal{O}(\bar{\mathcal{F}}(\bar{c}))$ be the order polytope and let $P^k = P \cap \{x_1 + \ldots + x_n = k\}$. Then the  lattice points in the second dilates of $P^{4}$ and the complementary section $P^{5}$ number respectively $84$ and $83$, i.e. we do not have equality of Ehrhart polynomials. This indicates the subtlety involved in this problem. 
 \end{rmk}

 So far, we have proposed a polytopal model for \emph{gates}, which are circular fences coming from compositions of the form $(c_1, 1, \ldots, c_s, 1)$, i.e. where all the down steps have size $1$. As noted in the paper in Remark \ref{rem:order}, the order polytope for the gate poset \emph{does not} obey symmetry, which is why we worked with chainlink polytopes instead. 
 
What about general circular fences, those coming from compositions of the form $(c_1, d_1, \ldots, c_s, d_s)$ with differing lengths of down steps $d_1, \ldots, d_s$? A natural proposal is as follows. The polytope  will consist of all real tuples $(x_1, y_1, \ldots, x_s, y_s)$ such that 
\[0 \leq x_i \leq a_i + 1, \quad 0 \leq y_i \leq b_i - 1, \quad (b_i-1)(x_i - a_i) \leq y_i , \quad y_i \leq (b_i- 1)x_{i+1}.\]
where the indices are taken cyclically. 

When all the down steps are $1$, all save the first set of inequalities become trivial, but if agree to combine the last two set of inequalities, we get $x_i - a_i \leq x_{i+1}$, which are the defining equations in the chainlink polytope with link number $1$. 

Unfortunately, these polytopes do not have the symmetry that chainlink polytopes have. We do not know if there is a way of defining polyhedral models for general circular fence posets so that this symmetry does hold.

\section{chainlink Posets and Rank Symmetry}\label{sec:4}

Let $\HL(\bar{a},l)$ be a chainlink polytope with  $2l\leq \mathrm{min}_i(a_i)$. Consider the integer points that lie inside the polytope.  When $l=1$, these points correspond to ideals of the circular fence poset $F(a_1-1,1,a_2-1,1,\ldots,a_s-1,1)$, where the rank of the ideal corresponds to the sum of the coordinates of the point. For general $l$, the integer points can be interpreted as ideals of a poset $F_l(\bar{a})$ formed by adding extra edges to the Hasse diagram of $F(a_1-1,1,a_2-1,1,\ldots,a_s-1,1)$ as shown in Figure~\ref{fig:posetexample}. More precisely, we can define chainlink posets as follows:
\begin{defn} Let $\bar{a}$  be a composition, and $l$ be a positive integer satisfying $2l\leq \operatorname{min}_i{a_i}$ as in the case for chainlink polytopes. The \definition{chainlink poset} $\CLP(\bar{a},l)$ is given by points $x_{i,j}$ for $1\leq i \leq \ell(\bar{a})$ and $0\leq j\leq a_i$  with the generating relations $x_{i,0}\succeq x_{i,1} \succeq \cdots \succeq x_{i,a_i}$ and $x_{i,a_i-l}\geq x_{i+1,l}$ for each $i$ where $i+1$ is calculated cyclically.
\end{defn}  

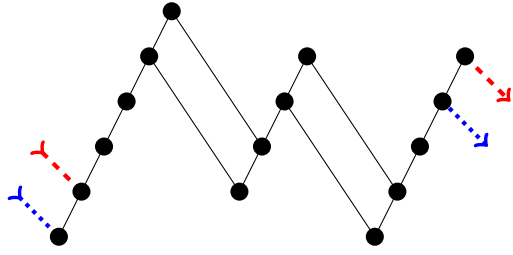
\begin{figure}
    \centering
    \begin{tikzpicture}[scale=.6]
		\node  (1) at (-16.5, 1) {};
		\node  (2) at (-16, 2) {};
		\node  (3) at (-17, 0) {};
		\node  (4) at (-15.5, 3) {};
		\node  (5) at (-15, 4) {};
		\node  (6) at (-14.5, 5) {};
		\node  (7) at (-13, 1) {};
		\node  (8) at (-12.5, 2) {};
		\node  (9) at (-12, 3) {};
		\node  (10) at (-11.5, 4) {};
		\node  (11) at (-10, 0) {};
		\node  (12) at (-9.5, 1) {};
		\node  (13) at (-9, 2) {};
		\node  (14) at (-8.5, 3) {};
		\node  (15) at (-8, 4) {};
		 \draw[->,red,dashed,ultra thick] (15.center) to (-7,3);
		\draw[>-,red,dashed,ultra thick] (-17.5,2)--(-16.5,1);
		\draw[->,blue,dotted, ultra thick] (14.center) to (-7.5,2);
		\draw[>-,blue,dotted,ultra thick] (-18,1)--(-17,0);
		\fill(-16.5,1) circle(.2); 
	    \fill(-16,2) circle(.2); 
		\fill(-17,0) circle(.2); 
		\fill(-15.5,3) circle(.2); 
	    \fill(-15,4) circle(.2); 
		\fill(-14.5,5) circle(.2); 
		\fill(-13,1) circle(.2); 
	    \fill(-12.5,2) circle(.2); 
		\fill(-12,3) circle(.2); 
		\fill(-11.5,4) circle(.2); 
	    \fill(-10,0) circle(.2); 
		\fill(-8,4) circle(.2); 
		\fill(-9.5,1) circle(.2); 
	    \fill(-9,2) circle(.2); 
		\fill(-8.5,3) circle(.2); 
		\draw (3.center) to (6.center);
		\draw (6.center) to (8.center);
		\draw (5.center) to (7.center);
		\draw (7.center) to (10.center);
		\draw (10.center) to (12.center);
		\draw (9.center) to (11.center);
		\draw (11.center) to (15.center);
    \end{tikzpicture}
    \caption{The chainlink poset with $\bar{a}=(6,4,5)$ and $l=2$, the two top right nodes are connected to the two bottom left nodes.}
    \label{fig:posetexample}
\end{figure}

We will use $\HL(a,l)^t$ to denote the slice of the polytope with respect to the hyperplane $x_1 + \ldots + x_s = t$. Note that this slice can be non-empty only when $t\in[0,n]$. Furthermore, the number of integer points in $\HL^t(a,l)$ is given by the coefficient of $q^t$ in the rank polynomial of $\CLP(a,l)$. 

 The fact that 
\[\# \HL^t(\bar{a},1) = \# \HL^{n-t}(\bar{a},1)\]
where  $\# P$ for a polytope $P$ is the number of lattice points in $P$ is as a result equivalent to the symmetry of the rank polynomial of  $\CLP(a,1)$ which was recently proved using an inductive argument by Kantarcı Oğuz and Ravichandran and then bijectively by Elizalde and Sagan. 

The connection between integer points of the polytope and rank polynomial of the corresponding poset still holds if we multiply all by a number $k$. This allows us to describe the coefficients of the Ehrhart quasi polynomial of slices of the chainlink polytope in terms of coefficients of rank polynomials of some chainlink posets. That means a general statement about the symmetry of rank polynomials of all chainlink posets can be used to prove the main theorem stated in the introduction, which is precisely what we aim to do in the next few sections.

\subsection{Matrix Formulation}\label{MatFor}

An oriented poset $ {P\!\nearrow}=(P,x_L,x_R)$ consists of a poset $P$ with two specialized vertices $x_L$ and $x_R$ which can be thought as the target (left) vertex $\target\,$  and the source (right) vertex ${\color{blue}\rightarrow}$. One can think of an oriented poset as a poset with an upwards arrow coming out of the source vertex $x_R$. One can combine oriented matrices by linking the arrow of one poset with target of another via $x_R \preceq y_L$ ($x_R\nearrow y_L$) to get $ ({P}\!\nearrow {Q})\!\nearrow$\footnote{Linking via  $x_R \succeq y_L$ is also an option, see \cite{ops}.  }. 

The effect of this operation on the rank polynomial can be calculated easily by $2\times2$ matrices. A \definition{rank matrix} of an oriented poset ${P\!\nearrow}$ is defined as follows:

    $$	\displaystyle \drm( {P}\!\nearrow):=\begin{bmatrix} \rank( {P};w)|_{x_R\in I} & \rank( {P};w)|_{x_R\notin I}\\ \rank( {P};w)|_{\substack{x_R\in I\\x_L \notin I}} & \rank( {P};w)|_{\substack{x_R\notin I\\x_L \notin I}}
			\end{bmatrix}$$

The entries are partial rank polynomials, where we are restricting to the ideals of the poset $P$ satisfying the given constraints. We also use the notation $\close( {P}\searrow)$ (resp. $\close( {P}\nearrow)$ to denote the structure obtained by adding the relation $x_R \succeq x_L$ (resp. $x_R \preceq x_L$) . On the rank matrix level, this corresponds to taking the trace. See Table~\ref{table:tableofmoves} for precise formulas and examples of these operations.

\begin{table}
    \centering
    \begin{tabular}[t]{|c|c|c|}
\hline
         & Formula & Example \\
         \hline &&\\

    $ {P}\nearrow {Q}$
        &\begin{tabular}{c}

 $\drm(( {P}\nearrow {Q})\!\nearrow)=\drm( {P}\!\nearrow) \cdot \drm( {Q}\!\nearrow)$
 \end{tabular}&\begin{tabular}{c}
 \begin{tikzpicture}[scale=.45]
\draw (0,0)--(1,1)--(3,-1);
\fill[white] (0,0) circle(.2) ;
\fill[red] (0,0) circle(.1) ;
\draw[red] (0,0) circle(.2);
\fill (1,1) circle(.1)  ;
\fill (2,0) circle(.1)  ;
\fill[blue] (3,-1) circle(.15)  ;
\draw[->, blue,dotted, thick] (3,-1)--(3.5,-.5);
\node at (1.5,-1.5) {$\scriptstyle{ {P}\!\nearrow}$\quad,};
\end{tikzpicture}\quad \begin{tikzpicture}[scale=.45]
\fill[white] (0,1) circle(.2) ;
\draw (0,-.5)--(1,.5);
\fill[white] (0,-.5) circle(.2) ;
\fill[red] (0,-.5) circle(.1) ;
\draw[red] (0,-.5) circle(.2);
\fill[blue] (1,.5) circle(.15)  ;
\draw[->, blue,dotted,thick] (1,.5)--(1.5,1);
\node at (.5,-1.5) {$\scriptstyle{ {Q}\!\nearrow}$};
\end{tikzpicture}$\quad$ \raisebox{.2cm}{$\rightarrow$} $\quad$\begin{tikzpicture}[scale=.45]
\draw (0,0)--(1,1)--(3,-1)--(5,1);
\fill[white] (0,0) circle(.2) ;
\fill[red] (0,0) circle(.1) ;
\draw[red] (0,0) circle(.2);
\fill (1,1) circle(.1)  ;
\fill(2,0) circle(.1);
\fill(3,-1) circle(.1);
\fill (4,0) circle(.1) ;
\fill[blue] (5,1) circle(.15)  ;
\draw[->, blue,dotted, thick] (5,1)--(5.5,1.5);
\node at (3,-1.5) {$\scriptstyle{( {P}\nearrow {Q})\!\nearrow}$};
\end{tikzpicture} \end{tabular}\\
\hline &&\\
 $\close\,( {P}\nearrow)$ &$\rank( \close( {P}\nearrow);w)=\operatorname{tr}(\rmm({ {P}\!\nearrow}))$ &\begin{tabular}{c}
        \begin{tikzpicture}[scale=.45]
\draw (0,0)--(1,1)--(3,-1);
\fill[white] (0,0) circle(.2) ;
\fill[red] (0,0) circle(.1) ;
\draw[red] (0,0) circle(.2);
\fill (1,1) circle(.1)  ;
\fill (2,0) circle(.1)  ;
\fill[blue] (3,-1) circle(.15)  ;
\draw[->, blue,dotted, thick] (3,-1)--(3.5,-.5);
\node at (1.5,-1.5) {$\scriptstyle{ {P}\!\nearrow}$};
\end{tikzpicture} $\quad$ \raisebox{.2cm}{$\rightarrow$} $\quad$   \begin{tikzpicture}[scale=.45]
\draw (0,0)--(1,1)--(3,-1)--(0,0);
\fill (0,0) circle(.1) ;
\fill (1,1) circle(.1)  ;
\fill (2,0) circle(.1)  ;
\fill (3,-1) circle(.1);
\node at (1.5,-1.5) {$\scriptstyle{\close( {P}\!\nearrow)}$};
\end{tikzpicture}\end{tabular} \\
\hline
    \end{tabular}
    \caption{The moves are shown through examples}
    \label{table:tableofmoves}
\end{table}

In particular, consider the case where $P$ is formed of a single node equal to both $x_R$ and $x_L$. We call this oriented poset an \definition{up step} and denote the corresponding matrix by $U$.

    $$	\displaystyle U:=\drm(\bullet\!\nearrow):=\begin{bmatrix}q&1\\0&1
			\end{bmatrix}.$$

Note that combining $k+1$ such posets gives us a chain of length $k$ with $x_L$ corresponding to the minimal element, and $x_L$ to the maximal. In the matrix level we have:

$$ \drm(C_k\!\nearrow)=U^{k-1}.$$

Let $ {B_{a\times b}}\!\nearrow$ denote the $ab$-element oriented box poset given by the direct product of two chains $C_{a-1}$ and $C_{b-1}$ with the left vertex given by $(a-1,0)$ and the right vertex is given by $(0,b-1)$ ($B_{3\times4}\!\nearrow$ is shown in Figure~\ref{fig:boxexample}). 

\begin{figure}
    \centering
    \rotatebox{-30}{
    \begin{tikzpicture}[scale=.6]
		\node  (2) at (-16, 2) {};
		\node  (4) at (-15.5, 3) {};
		\node  (5) at (-15, 4) {};
		\node  (6) at (-14.5, 5) {};
		\node  (7) at (-13, 1) {};
		\node  (8) at (-12.5, 2) {};
		\node  (9) at (-12, 3) {};
		\node  (10) at (-11.5, 4) {};
		\draw (2.center) to (6.center);
		\draw (4.center) to (8.center);
		\draw (2.center) to (7.center);
		\draw (7.center) to (10.center);
		\draw (10.center) to (6.center);
		\draw (9.center) to (5.center);
		\draw (-13,4.5) to (-14.5,1.5);
		\fill(-15.5,3) circle(.15); 
	    \fill(-15,4) circle(.15); 
		\fill(-14.5,5) circle(.15); 
		\fill(-13,1) circle(.15); 
	    \fill(-12.5,2) circle(.15); 
		\fill(-12,3) circle(.15); 
		\fill[blue](-11.5,4) circle(.22); 
	    \fill(-14.5,1.5) circle(.15); 
		\fill(-14,2.5) circle(.15); 
		\fill(-13.5,3.5) circle(.15); 
	    \fill(-13,4.5) circle(.15); 
	\fill[white] (-16,2) circle(.3) ;
\fill[red] (-16,2) circle(.15) ;
\draw[red] (-16,2) circle(.3);
\draw[->, blue, thick] (-11.5, 4)--(-11, 5);
    \end{tikzpicture}}
    \caption{The box poset $B_{3\times 4}\!\nearrow$.}
    \label{fig:boxexample}
\end{figure}
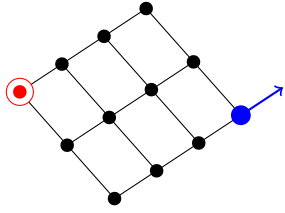

The rank matrix of a box poset is given as follows:
    $$	\displaystyle \drm(B_{a\times b}\!\nearrow)=\begin{bmatrix} q^b\qbinom{a+b-1}{b}_q & \qbinom{a+b-1}{b-1}_q  \\  q^b\qbinom{a+b-2}{b}_q  & \qbinom{a+b-2}{b-1}_q \end{bmatrix} = \begin{bmatrix} \qbinom{a+b}{b}_q -\qbinom{a+b-1}{b-1}_q & \qbinom{a+b}{b}_q- q^b\qbinom{a+b-1}{b}_q  \\  q^b\qbinom{a+b-2}{b}_q  & \qbinom{a+b-2}{b-1}_q 
			\end{bmatrix}$$

Next we will see that any given chainlink poset can be realized by combining copies of the up step, and the box poset $B_{2\times l}\!\nearrow$.

\begin{prop}  Consider the chainlink poset $\CLP(\bar{a},l)$ with $2l\leq \operatorname{min}_i(a_i)$. Let $\bar{d}$ be the weak composition formed by taking $d_i=a_i-2l$.  The rank polynomial of $\CLP(\bar{a},l)$ is given by:

\begin{equation} \label{eq:matrixconnection}
    \rank (\CLP(\bar{a},l);q)= \operatorname{tr}(U^{d_1}\cdot B \cdot U^{d_2} \cdot B \cdot \,\cdots\,\cdot U^{d_1} \cdot B)
\end{equation} where $B$ denotes the rank matrix $\drm(B_{2\times l}\!\nearrow)$.

\end{prop}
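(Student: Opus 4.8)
The plan is to build the chainlink poset $\CLP(\bar a, l)$ from the primitive oriented pieces (up steps and $B_{2\times l}$ boxes) and track the effect of each gluing on the rank matrix, using the fact that linking via $\nearrow$ multiplies rank matrices and that closing up (identifying the final source with the initial target) takes the trace. First I would give the correct decomposition: in $\CLP(\bar a,l)$ each block of indices $\{x_{i,0},\dots,x_{i,a_i}\}$ is a chain of length $a_i$, and the ``link'' relation $x_{i,a_i-l}\succeq x_{i+1,l}$ ties block $i$ to block $i+1$. I want to read each block-plus-link as: a chain of length $l$ from $x_{i+1,0}$ down to $x_{i+1,l}$, glued (via $\nearrow$) to the portion of block $i$ between $x_{i,a_i-l}$ and $x_{i,0}$ — these two chains of length $l$ together with the cover relation $x_{i,a_i-l}\succeq x_{i+1,l}$ form exactly the box $B_{2\times l}$ (two chains $C_l$ with left vertex $x_{i,a_i-l}$ at the ``bottom'' of one chain and right vertex $x_{i+1,l}$, matching the description of $B_{a\times b}\nearrow$ with $a=b=l$... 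I need to double-check the indices against the stated convention $B_{a\times b}$ has left vertex $(a-1,0)$, right vertex $(0,b-1)$). The middle of block $i$, namely the subchain from $x_{i,l}$ down to $x_{i,a_i-l}$, has $a_i-2l=d_i$ cover relations and contributes $U^{d_i}$.

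The key steps in order: (1) Fix an orientation on each primitive piece. For the box $B_{2\times l}\nearrow$ the rank matrix $B=\drm(B_{2\times l}\nearrow)$ is given in the displayed formula; for the up step, $\drm(\bullet\nearrow)=U$ and $\drm(C_k\nearrow)=U^{k-1}$, so a chain contributing $d_i$ new cover relations corresponds to $U^{d_i}$. (2) Show that concatenating, along the cycle, the pieces $U^{d_1}, B, U^{d_2}, B, \dots, U^{d_s}, B$ via the $\nearrow$-linking operation reconstructs, as an oriented poset, the ``cut-open'' chainlink poset, i.e. $\CLP(\bar a, l)$ with the single link relation between block $s$ and block $1$ not yet imposed; here I invoke the multiplicativity $\drm((P\nearrow Q)\nearrow)=\drm(P\nearrow)\cdot\drm(Q\nearrow)$ repeatedly to get that the rank matrix of the cut-open poset is $U^{d_1}B\,U^{d_2}B\cdots U^{d_s}B$ (with the convention in \eqref{eq:matrixconnection} that the last factor's $U^{d_1}$ is a typo for $U^{d_s}$, or equivalently one may cyclically relabel). (3) Impose the last link relation $x_{s,a_s-l}\succeq x_{1,l}$, which is exactly the $\close(\cdot\nearrow)$ operation identifying source with target; by the table this replaces the rank matrix product by its trace, yielding $\rank(\CLP(\bar a,l);q)=\operatorname{tr}(U^{d_1}B\,U^{d_2}B\cdots U^{d_s}B)$.

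I would carry out step (2) by induction on $s$ (or on the number of blocks already assembled), checking the base case — a single block with its link, realized as $U^{d_1}$ glued to $B$ — directly from the definitions of $\nearrow$-linking and the box poset, and being careful that the specialized left/right vertices chosen at each stage are compatible so that the next $\nearrow$-link attaches at the right place (the source of the assembled piece must be $x_{i,0}$-type vertex ready to receive the next box's chain, and the target must be the vertex that will eventually close up). The main obstacle I anticipate is precisely this bookkeeping of which vertex plays the role of $x_L$ and which plays $x_R$ at each intermediate stage, and verifying that the box $B_{2\times l}$ with the stated left/right vertex choice really does encode the pair (tail of chain $i$) $\cup$ (head of chain $i+1$) $\cup$ (the single link cover) — in particular that no spurious order relations are introduced and that $B_{2\times l}$'s two ``free'' corners are exactly $x_{i,0}$ and $x_{i+1,l}$ as required. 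Once the combinatorial identification of the oriented posets is pinned down, the matrix identity is immediate from Table~\ref{table:tableofmoves}, and the hypothesis $2l\le\min_i a_i$ is used only to guarantee $d_i=a_i-2l\ge 0$ so that $U^{d_i}$ makes sense.
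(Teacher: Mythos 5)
Your overall strategy coincides with the paper's: cut the cycle open, realize the resulting oriented poset as an alternating concatenation of up-step chains and copies of $B_{2\times l}\!\nearrow$, apply the multiplicativity $\drm((P\nearrow Q)\!\nearrow)=\drm(P\!\nearrow)\cdot\drm(Q\!\nearrow)$ along the chain, and close up with a trace. You also correctly note that the final factor $U^{d_1}$ in the displayed formula should read $U^{d_s}$, and the matrix-algebra half of your argument is exactly what the paper does.

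The genuine gap sits in the one step you defer as ``the main obstacle'': the identification of each junction with the box $B_{2\times l}$. As you set it up --- two $l$-element chains joined by the \emph{single} generating relation $x_{i,a_i-l}\succeq x_{i+1,l}$ --- the claim is false for $l\geq 2$. Writing the upper chain as $u_1\succ\dots\succ u_l$ and the lower as $v_1\succ\dots\succ v_l$, the box $B_{2\times l}=C_1\times C_{l-1}$ has $u_j\succ v_k$ exactly when $j\leq k$, i.e.\ it is generated by the $l$ parallel covers $u_j\gtrdot v_j$; a single generating relation $u_a\succ v_b$ instead produces the rectangle of comparabilities $\{j\leq a\}\times\{k\geq b\}$, which for $l=2$ gives a $4$-chain or a tree but never the $2\times 2$ grid. (Your bookkeeping of which segment of chain $i$ enters the box is also off --- the piece between $x_{i,0}$ and $x_{i,a_i-l}$ has $a_i-l+1$ elements, not $l$ --- but that is secondary.) The poset the proposition is actually about, the one drawn in Figure~\ref{fig:posetexample}, joins the top $l$ elements of chain $i$ to the bottom $l$ elements of chain $i+1$ by $l$ parallel covers, and only then does the junction become $B_{2\times l}$ with left vertex the $l$-th-from-top element of chain $i$ and right vertex the $l$-th-from-bottom element of chain $i+1$, leaving $d_{i+1}$ interior up-steps before the next box. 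So the verification you postpone is not mere index-chasing: carried out against the single-relation reading it fails, and the proof only closes once the junction is read off the figure (equivalently, once the definition is taken to include all $l$ link relations). With that correction your induction and the final trace step go through and reproduce the paper's argument.
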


\begin{proof} The description of $\CLP(\bar{a},l);q$ is given by taking chains and connecting the $l$ maxima of a chain of the $l$ minima of the next. The condition on $l$ assures that these are always different vertices. Each pair of $2l$ vertices has the shape of the poset $(B_{2\times l}\!\nearrow$ where the right endpoint of $i$th one is connected to the  left endpoint of the $i+1$st one from below, with $d_i$ points in between. This is the poset given by $\close(B_{2\times l}\!\nearrow  C_{d_1-1}\!\nearrow B_{2\times l}\!\nearrow U C_1-{d_2-1}\!\nearrow\cdots B_{2\times l}\!\nearrow U C_{d_s-1}\!\nearrow)$. As the rank polynomial can be calculating by taking the trace of the corresponding matrices, and $\rmm(C_{a-1}\!\nearrow)=U^a$ the result follows.

\end{proof}

\begin{example} The chainlink poset given in Figure~\ref{fig:posetexample} with $\bar{a}=(6,4,5)$ and $l=2$ can be formed by combining $2\times 2$ boxes with up steps and then taking the closure: $\close(B_{2\times 2}\!\nearrow \cdot \bullet\!\nearrow \cdot  \bullet\!\nearrow \cdot B_{2\times 2}\! \nearrow \cdot B_{2\times l}\!\nearrow \cdot  \bullet\!\nearrow)$. The corresponding rank polynomial is given by:

\begin{eqnarray*}
  &&\operatorname{tr} \left( \begin{bmatrix} q^2 [3]_q & [3]_q\\ q^2 & [2]_q\end{bmatrix} \cdot \begin{bmatrix} q&1\\0&1 \end{bmatrix} \cdot \begin{bmatrix} q&1\\0&1 \end{bmatrix} \cdot \begin{bmatrix} q^2 [3]_q & [3]_q\\ q^2 & [2]_q\end{bmatrix} \cdot \begin{bmatrix} q^2 [3]_q & [3]_q\\ q^2 & [2]_q\end{bmatrix} \cdot \begin{bmatrix} q&1\\0&1 \end{bmatrix} \right)\\
  && =1+3q+6q^2+9q^3+12q^4+14q^5+16q^6+17q^7\\
  && \hspace{4cm} +17q^8+16q^9+14q^{10}+12q^{11}+9q^{12} +6q^{13}+3q^{14}+q^{15}.
\end{eqnarray*}

\end{example}
Note that the rank polynomial given in this instance is symmetric. Next, we will show that this is always the case.

\subsection{Recurrence Relations and Rank Symmetry}

One advantage of building posets via matices is that the characteristic equations of matrices give us recurrence relations in the rank polynomial level. For example, consider the characteristic polynomial of $U$. Plugging $U$ in the place of $x$ gives us the following identity:
\begin{equation*}
 U^2 =(q+1) U + q,
\end{equation*}
 Note that the coefficient of $U$ is symmetric around $q^{1/2}$ and $q$ is trivially symmetric around $q$.

\begin{lemma}\label{lem:characteristic}
Let $B= \drm(B_{a\times b}\!\nearrow)$ for some fixed $a,b$. The characteristic polynomials of $B$ as well as well as $BU$ have coefficients that are symmetric polynomials in $q$. In particular, the trace and determinant of $B$ and $BU$ are symmetric about $ab/2$, $ab$, $(ab+1)/2$ and $ab+1$ respectively.
\end{lemma}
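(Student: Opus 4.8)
The plan is to compute the characteristic polynomials of $B$ and $BU$ explicitly and verify the claimed symmetries directly, using the $q$-binomial formulas already recorded for $\drm(B_{a\times b}\!\nearrow)$. For a $2\times 2$ matrix $M$ the characteristic polynomial is $x^2 - \operatorname{tr}(M)\, x + \det(M)$, so the statement reduces entirely to understanding $\operatorname{tr}(B)$, $\det(B)$, $\operatorname{tr}(BU)$, $\det(BU)$ as Laurent/ordinary polynomials in $q$ and checking each is palindromic about the stated center. Here I will use the convention that a polynomial $p(q) = \sum c_i q^i$ is \emph{symmetric about $c$} (for $c$ possibly a half-integer) if $c_i = c_{2c-i}$ for all $i$, equivalently $q^{2c} p(1/q) = p(q)$.

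First I would handle $B$ itself. From the displayed formula,
\[
\operatorname{tr}(B) = q^b \qbinom{a+b-1}{b}_q + \qbinom{a+b-2}{b-1}_q,
\]
and I would show each summand is symmetric about $ab/2$: the Gaussian binomial $\qbinom{m}{k}_q$ is symmetric about $k(m-k)/2$, so $\qbinom{a+b-1}{b}_q$ is symmetric about $b(a-1)/2$ and multiplying by $q^b$ shifts the center to $b(a-1)/2 + b = b(a+1)/2$; meanwhile $\qbinom{a+b-2}{b-1}_q$ is symmetric about $(b-1)(a-1)/2$. These two centers are generally different, so symmetry of the sum is \emph{not} automatic — this is the first genuine point to check. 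I would instead use the second form of the matrix, where $\operatorname{tr}(B) = \qbinom{a+b}{b}_q - \qbinom{a+b-1}{b-1}_q + \qbinom{a+b-2}{b-1}_q$; here $\qbinom{a+b}{b}_q$ is symmetric about $ab/2$, and I would show the difference $\qbinom{a+b-1}{b-1}_q - \qbinom{a+b-2}{b-1}_q = q^{a}\qbinom{a+b-2}{b-2}_q$ (via the standard Pascal recurrence $\qbinom{n}{k}_q = \qbinom{n-1}{k-1}_q + q^k \qbinom{n-1}{k}_q$ applied appropriately), and that $q^a \qbinom{a+b-2}{b-2}_q$ is symmetric about $ab/2$ as well (its center is $a + (b-2)a/2 = ab/2$). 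So $\operatorname{tr}(B)$ is a difference of two polynomials each symmetric about $ab/2$, hence symmetric about $ab/2$. For $\det(B)$ I would note that the box poset $B_{a\times b}$ has a rank-symmetry of its own: I expect $\det(B) = q^{ab}$ or a similarly clean monomial, which I would verify from $\det\begin{bmatrix} q^b\qbinom{a+b-1}{b}_q & \qbinom{a+b-1}{b-1}_q \\ q^b\qbinom{a+b-2}{b}_q & \qbinom{a+b-2}{b-1}_q\end{bmatrix}$ by expanding and collapsing Vandermonde-type $q$-binomial identities; a monomial is trivially symmetric about its own exponent, giving the claim about $\det B$ being symmetric about $ab$.

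For $BU$ I would compute the product directly. Writing $B = \begin{bmatrix} \beta_{11} & \beta_{12} \\ \beta_{21} & \beta_{22}\end{bmatrix}$ and $U = \begin{bmatrix} q & 1 \\ 0 & 1\end{bmatrix}$, we get $BU = \begin{bmatrix} q\beta_{11} & \beta_{11}+\beta_{12} \\ q\beta_{21} & \beta_{21}+\beta_{22}\end{bmatrix}$, so $\operatorname{tr}(BU) = q\beta_{11} + \beta_{21} + \beta_{22} = q\beta_{11} + \operatorname{tr}(B) - \beta_{11}\cdot\!\big|_{\text{no}} $ — more precisely $\operatorname{tr}(BU) = q\,\beta_{11} + \beta_{22} + \beta_{21}$, and $\det(BU) = \det(B)\det(U) = q\det(B)$. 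Since $\det(B)$ is (conjecturally, to be verified) a monomial symmetric about $ab$, $q\det(B)$ is a monomial symmetric about $ab+1$. For $\operatorname{tr}(BU)$, the claim is symmetry about $(ab+1)/2$; I would show $q\beta_{11}$, $\beta_{22}$, $\beta_{21}$ are each symmetric about $(ab+1)/2$, or group them as before into differences of things with that center. Using $\beta_{11} = q^b\qbinom{a+b-1}{b}_q$ (center $b(a+1)/2$, so $q\beta_{11}$ has center $b(a+1)/2 + 1 = (ab+b+2)/2$) — this equals $(ab+1)/2$ only when $b=1$, so again the individual centers do not match and I must use cancellation. The cleanest route is: once $\operatorname{tr}(B)$ and $\det(B)$ are known to be symmetric about $ab/2$ and $ab$, observe that the eigenvalues $\lambda_1,\lambda_2$ of $B$ satisfy $\lambda_1\lambda_2 = q^{ab}$ (monomial) and $\lambda_1 + \lambda_2$ symmetric about $ab/2$; this is exactly the statement that the map $\lambda \mapsto q^{ab}/\lambda$ swaps the eigenvalues, i.e. the substitution $q \mapsto 1/q$ followed by scaling is an involution on the pair. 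Then $\operatorname{tr}(BU) = \operatorname{tr}(UB)$ and I can track how this eigenvalue involution acts on the extra factor $U$; since $q^{2}\cdot U(1/q)$ relates back to $U(q)$ up to conjugation/scaling in a controlled way, the product inherits symmetry about $(ab+1)/2$. I would spell this out via the explicit two-term formula rather than eigenvalues if the eigenvalue bookkeeping gets delicate.

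The main obstacle I anticipate is precisely the fact that the natural summands of $\operatorname{tr}(B)$ and $\operatorname{tr}(BU)$ are symmetric about \emph{different} centers, so the result is not termwise obvious and requires using $q$-Pascal / $q$-Vandermonde identities to rewrite the traces as combinations of $q$-binomials all sharing the target center (equivalently, establishing the clean monomial value of $\det(B)$, which makes the eigenvalue-involution argument go through). Once the $\det(B)$ computation is pinned down — I expect the two off-diagonal and two diagonal $q$-binomial products to telescope into a single power of $q$ — the symmetry of the characteristic polynomial coefficients, and hence of $\operatorname{tr}$ and $\det$ of $B$ and $BU$ about $ab/2$, $ab$, $(ab+1)/2$, $ab+1$ respectively, follows formally. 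A secondary, purely bookkeeping, hazard is keeping the half-integer centers straight (parity of $ab$), which I would manage by consistently phrasing symmetry as the identity $q^{2c}p(1/q) = p(q)$ so that no case distinction on the parity of $ab$ is needed.
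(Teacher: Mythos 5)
Your treatment of $\operatorname{tr}(B)$ is correct and is essentially the paper's argument: the paper likewise regroups the trace via the $q$-Pascal recurrence into $\qbinom{a+b}{b}_q - q^a\qbinom{a+b-2}{a}_q$, a difference of two polynomials each palindromic about $ab/2$ (your $q^a\qbinom{a+b-2}{b-2}_q$ is the same term). The genuine gap is the determinant and everything you build on it. You expect $\det(B)=q^{ab}$, but this is false: for $a=b=2$ one has $B=\left[\begin{smallmatrix} q^2(1+q+q^2) & 1+q+q^2 \\ q^2 & 1+q\end{smallmatrix}\right]$ and $\det(B)=q^3(1+q+q^2)$, not a monomial. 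Consequently the eigenvalue-involution argument you propose for $\operatorname{tr}(BU)$, which hinges on $\lambda_1\lambda_2=q^{ab}$, does not get off the ground. What is true, and what the paper uses (after a column operation on $B$), is that
\[
\det(B)=\pm\left(q^b\qbinom{a+b}{b}_q\qbinom{a+b-2}{b}_q-q^b\Bigl(\qbinom{a+b-1}{b}_q\Bigr)^2\right),
\]
a difference of two products of $q$-binomials each individually symmetric about $ab$ (check: $b+\tfrac{ab}{2}+\tfrac{b(a-2)}{2}=ab$ and $b+b(a-1)=ab$). Symmetry of $\det(B)$ about $ab$ follows with no monomial claim, and then $\det(BU)=q\det(B)$ is symmetric about $ab+1$ exactly as you say.

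For $\operatorname{tr}(BU)$, the fallback you only gesture at ("the explicit two-term formula") is in fact the whole proof, and it is short: with $\beta_{21}=q^b\qbinom{a+b-2}{b}_q$ and $\beta_{22}=\qbinom{a+b-2}{b-1}_q$, the $q$-Pascal identity gives $\beta_{21}+\beta_{22}=\qbinom{a+b-1}{b}_q$, whence
\[
\operatorname{tr}(BU)=q^{b+1}\qbinom{a+b-1}{b}_q+\qbinom{a+b-1}{b}_q=(q^{b+1}+1)\qbinom{a+b-1}{b}_q,
\]
a product of palindromic factors with centers $(b+1)/2$ and $b(a-1)/2$ summing to $(ab+1)/2$. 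You correctly identified that the summands of the traces are not individually centered where you need them, but the cancellation you need is achieved by these Pascal collapses, not by the eigenvalue bookkeeping; as written, your proposal establishes the $B$ half of the lemma and leaves the $\det(B)$ and $BU$ halves resting on a false premise.
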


\begin{proof} As the matrix $B$ satisfies 
\begin{eqnarray*}\displaystyle \drm(B_{a\times b}\!\nearrow)=\begin{bmatrix} q^b\qbinom{a+b-1}{b}_q & \qbinom{a+b-1}{b-1}_q  \\  q^b\qbinom{a+b-2}{b}_q  & \qbinom{a+b-2}{b-1}_q \end{bmatrix} = \begin{bmatrix} \qbinom{a+b}{b}_q -\qbinom{a+b-1}{b-1}_q & \qbinom{a+b-1}{b-1}_q  \\ \qbinom{a+b-1}{b}_q-\qbinom{a+b-2}{b-1}_q  & \qbinom{a+b-2}{b-1}_q 
			\end{bmatrix},
\end{eqnarray*}

we will define another matrix $B'$ to simplify some of our calculations:
\begin{eqnarray*}
 \displaystyle B'&:=& B \begin{bmatrix} 1&-1\\1&0\end{bmatrix} =     \begin{bmatrix}
    \qbinom{a+b}{b}_q &-q^b\qbinom{a+b-1}{b}_q\\
    \qbinom{a+b-1}{b}_q &-q^b\qbinom{a+b-2}{b}_q
    \end{bmatrix}.   
\end{eqnarray*}

Consider the trace of $B$.
\begin{eqnarray*} {tr}(B) &=& q^b \qbinom{a+b-1}{b}_q + \qbinom{a+b-2}{b-1}_q\\
&=&\qbinom{a+b}{b}_q- \qbinom{a+b-1}{b-1}_q +\qbinom{a+b-1}{b-1}_q - q^a\qbinom{a+b-2}{a}_q\\
&=&\qbinom{a+b}{b} - q^a\qbinom{a+b-2}{a}_q.
\end{eqnarray*}
As both polynomals on the right are symmetric about $ab/2$ the result follows. 
Similarly, we can show that the determinant gives us a symmetric polynoial around about $ab$:
\[\operatorname{det}(B) = -\operatorname{det}(B')=q^b\qbinom{a+b}{b}_q   \qbinom{a+b-2}{b}_q - q^b\left(\qbinom{a+b-1}{b}_q  \right)^2. \]

Now let us do the same verification for the trace and determinant of the matrix $BU$:
$$ BU= \begin{bmatrix}  q^{b+1}\qbinom{a+b-1}{b}_q & q^b\qbinom{a+b-1}{b}_q + \qbinom{a+b-1}{b-1}_q \\  q^{b+1} \qbinom{a+b-2}{b}_q &  q^b\qbinom{a+b-2}{b}_q  + \qbinom{a+b-2}{b-1}_q\end{bmatrix}=\begin{bmatrix}  q^{b+1}\qbinom{a+b-1}{b}_q & \qbinom{a+b}{b}_q \\  q^{b+1} \qbinom{a+b-2}{b}_q &  \qbinom{a+b-1}{b}_q \end{bmatrix}.
$$

One can verify that the trace of this matrix is symmetric around $(ba+1)/2$, whereas the determinant is symmetric around $(ba+1)$:
\begin{eqnarray*}
\operatorname{tr}(BU)&=&(q^{b+1}+1) \qbinom{a+b-1}{b}_q,\\
\operatorname{det}(BU)&=&q^{b+1} \left(\qbinom{a+b-1}{b}_q\right)^2- q^{b+1} \qbinom{a+b-2}{b}_q\qbinom{a+b}{b}_q.
\end{eqnarray*}
\end{proof}

\begin{lemma} \label{lem:charpol} Let $X$ be a matrix whose trace is given by a symmetric polynomial on $q$ with center $C$.
Then we have the following:
\begin{enumerate}
    \item If $\mathrm{tr}(BX)$ is symmetric around $C+(ab)/2$, then for all $k$, $\mathrm{tr}(B^k X)$ is symmetric with center $C+k(ab)/2$.
    \item If $\mathrm{tr}(U X)$ is symmetric around $C+1/2$, then for all $k$, $\mathrm{tr}(U ^kX)$ is symmetric with center $C+k/2$.
    \item If $\mathrm{tr}(B U X)$ is symmetric around $C+(ab+1)/2$, then for all $k$, $\mathrm{tr}((B U)^kX)$ is symmetric with center $C+k(ab+1)/2$.
\end{enumerate}
\end{lemma}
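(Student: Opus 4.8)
The plan is to reduce everything to the behaviour of two scalar sequences via the Cayley--Hamilton theorem. For a $2\times 2$ matrix $M$ we have $M^2 = \operatorname{tr}(M)\,M - \det(M)\,I$, and iterating this shows that for every $k\geq 0$ we can write $M^k = \alpha_k\,M + \beta_k\,I$, where $\alpha_k$ and $\beta_k$ are the polynomials in $\operatorname{tr}(M)$ and $\det(M)$ determined by the common recurrence $z_{k+2} = \operatorname{tr}(M)\,z_{k+1} - \det(M)\,z_k$ with $\alpha_0 = 0$, $\alpha_1 = 1$ and $\beta_0 = 1$, $\beta_1 = 0$. Multiplying by $X$ and taking traces gives the single identity
\[
\operatorname{tr}(M^k X) = \alpha_k\,\operatorname{tr}(MX) + \beta_k\,\operatorname{tr}(X),
\]
which I will apply with $M = B$, $M = U$ and $M = BU$ to get parts (1), (2) and (3) respectively.

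Next I would record the symmetry data, using the convention that a nonzero polynomial $f$ is \emph{symmetric with centre $\gamma$} when $q^{2\gamma}f(q^{-1}) = f(q)$ as Laurent polynomials, while the zero polynomial is declared symmetric with every centre. Under this convention a sum of polynomials symmetric with the same centre $\gamma$ is symmetric with centre $\gamma$, and a product of polynomials symmetric with centres $\gamma_1$ and $\gamma_2$ is symmetric with centre $\gamma_1+\gamma_2$. Writing $c = ab/2$, $c = 1/2$ and $c = (ab+1)/2$ in the three cases, Lemma~\ref{lem:characteristic} (together with the trivial facts $\operatorname{tr}(U) = q+1$, $\det(U) = q$) says precisely that $\operatorname{tr}(M)$ is symmetric with centre $c$ and $\det(M)$ is symmetric with centre $2c$.

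The heart of the argument is then an induction on $k$ showing that $\alpha_k$ is symmetric with centre $(k-1)c$ and $\beta_k$ is symmetric with centre $kc$. The base values $\alpha_0 = 0$, $\alpha_1 = 1$, $\beta_0 = 1$, $\beta_1 = 0$ are immediate, and the inductive step is forced by the recurrence: $\operatorname{tr}(M)\alpha_{k+1}$ has centre $c + kc$, $\det(M)\alpha_k$ has centre $2c + (k-1)c$, these coincide, so $\alpha_{k+2}$ is symmetric with centre $(k+1)c$; the computation for $\beta$ is identical. Feeding the hypotheses into the trace identity finishes the proof: assuming $\operatorname{tr}(MX)$ is symmetric with centre $C+c$, the term $\alpha_k\operatorname{tr}(MX)$ is symmetric with centre $(k-1)c + (C+c) = C+kc$, while $\beta_k\operatorname{tr}(X)$ is symmetric with centre $kc + C$; the two summands share the centre $C+kc$, hence so does $\operatorname{tr}(M^k X)$.

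I do not expect a genuine obstacle here: once the Cayley--Hamilton reduction is made, the rest is bookkeeping. The only points needing a little care are that the convention about the zero polynomial is kept in force so that the degenerate initial values $\alpha_0 = 0$ and $\beta_1 = 0$ do not break the induction, and that the centres of the two terms in each recurrence step really do agree --- which hinges on the centre of $\det M$ being exactly twice that of $\operatorname{tr} M$, a fact supplied by Lemma~\ref{lem:characteristic}.
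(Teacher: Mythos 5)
Your proof is correct and follows essentially the same route as the paper: both rest on the Cayley--Hamilton identity $M^2=\operatorname{tr}(M)\,M-\det(M)\,I$, the resulting three-term linear recurrence for $\operatorname{tr}(M^kX)$, and an induction on centres that works because Lemma~\ref{lem:characteristic} makes the centre of $\det(M)$ exactly twice that of $\operatorname{tr}(M)$. Your only departure is organizational --- you induct on the coefficient sequences $\alpha_k,\beta_k$ in $M^k=\alpha_kM+\beta_kI$ rather than directly on $\operatorname{tr}(M^kX)$ --- which is equivalent, and your explicit handling of the zero polynomial and of where the hypothesis on $\operatorname{tr}(MX)$ enters is a welcome bit of extra care.
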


\begin{proof} For the first claim, consider the characteristic polynomial of $B$.
$${\displaystyle B^{2}=\operatorname {tr} (B) B-\det(B).}$$
Substitution into the trace equation gives the following identity for any $k\geq 2$:
$$\mathrm{tr}(B^kX)=\mathrm{tr} (B)\mathrm{tr}( B^{k-1}X)
-\det(B)\mathrm{tr}(B^{k-2}X)$$
We have shown in Lemma~\ref{lem:characteristic} $\mathrm{tr}(B)$ is a symmetric polynomial with center $ab/2$ and  $\mathrm{det}(B)$ is symmetric with center $ab$, the result follows by induction. The other claims follow similarly as $\mathrm{tr}(U)$, $\det(U)$ and $\mathrm{tr}(BU)$ and $\mathrm{det}(BU)$ are symmetric polynomials with centers of symmetry given by $1/2$, $1$, $(ab+1)/2$ and $ab+1$ respectively.
\end{proof}

\begin{thm}\label{prop:symmetry} Let $(d_1,d_2,\ldots,d_s)$ be a weak composition and 
$B_{a\times b}\!\nearrow$ be an oriented box poset where $B$ denotes the rank matrix $\drm(B_{a\times b}\!\nearrow)$. Then the following polynomial is symmetric.
\begin{equation*}
    \operatorname{tr}(U^{d_1} \cdot B \cdot U^{d_2} \cdot B \cdots U^{d_1} \cdot B).
\end{equation*}
\end{thm}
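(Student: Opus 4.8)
The plan is to reduce the symmetry of the big trace $\operatorname{tr}(U^{d_1} B U^{d_2} B \cdots U^{d_s} B)$ to repeated applications of Lemma~\ref{lem:charpol}, peeling off one factor $U^{d_i}B$ at a time. The key observation is that each block $U^{d_i}B$ can be rewritten as $U^{d_i - 1}(UB)$ when $d_i \geq 1$, or is just $B$ when $d_i = 0$, so the whole word is a product of the two ``elementary'' matrices $U$ and $BU$ (plus possibly one leftover $B$ at the end which we can absorb). The symmetry centers behave additively: a factor $U$ shifts the center by $1/2$, a factor $BU$ shifts it by $(ab+1)/2$, a factor $B$ shifts it by $ab/2$. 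Concretely, set $C_0 = 0$ (the $1\times 1$ ``empty word'' having trace a symmetric polynomial, namely a constant, with center $0$), and build the word up from the right, letting $X_j$ be the partial product of the last $j$ elementary factors. The induction hypothesis is that $\operatorname{tr}(X_j)$ is symmetric with center equal to the appropriate accumulated half-integer shift.

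First I would set up the bookkeeping carefully. Write the target word as $W = M_1 M_2 \cdots M_r$ where each $M_i \in \{U, B, BU\}$, chosen so that $\sum_i (\text{shift of } M_i)$ equals the expected center of symmetry (which, since the sum of all coordinates ranges over $[0,n]$ with $n = \sum a_i = \sum(d_i + 2l)$ and $ab = 2l$, should work out to $n/2$). Then I would induct on $r$: the base case $r = 0$ is the empty product with trace $1$, symmetric about $0$. For the inductive step, I have $\operatorname{tr}(M_2\cdots M_r)$ symmetric with center $C$ by hypothesis; I want $\operatorname{tr}(M_1 M_2 \cdots M_r)$ symmetric with center $C + (\text{shift of }M_1)$. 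This is exactly what Lemma~\ref{lem:charpol} delivers, \emph{provided} I can check the ``seed'' hypothesis of that lemma: that $\operatorname{tr}(M_1 \cdot (M_2\cdots M_r))$ — i.e. one extra factor of $M_1$ — is symmetric with the shifted center. But wait: Lemma~\ref{lem:charpol} as stated needs $\operatorname{tr}(B X)$ (or $\operatorname{tr}(UX)$, $\operatorname{tr}(BUX)$) symmetric as its \emph{input}, not its conclusion. So the cleaner route is to apply the lemma with $X = M_{?}\cdots M_r$ being a block that is itself already a power, which suggests grouping the word by consecutive equal elementary factors.

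Here is the refined approach. Group $W$ into maximal runs: $W = P_1^{e_1} P_2^{e_2}\cdots P_m^{e_m}$ where consecutive $P_k$ differ and each $P_k\in\{U, BU, B\}$ — actually using only $U$ and $B$ is enough if I keep $U$ and $B$ as the generators and note $U^{d_i} B$ directly. Take $X = P_2^{e_2}\cdots P_m^{e_m}$, assumed inductively to have symmetric trace with known center $C$. I must verify $\operatorname{tr}(P_1 X)$ is symmetric with center $C + (\text{shift})$; since $P_1$ is either $B$ or $U$, and $X$'s leading factor is the \emph{other} generator, $\operatorname{tr}(P_1 X)$ is a trace of the form $\operatorname{tr}(B U^{e_2} \cdots)$ or $\operatorname{tr}(U B^{e_2}\cdots)$, and I can recursively peel there too — so really this is a double induction (on number of runs, then on run length within, the latter handled wholesale by Lemma~\ref{lem:charpol}). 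The genuinely new computational content needed beyond Lemma~\ref{lem:characteristic} is the single ``crossing'' identity: that $\operatorname{tr}(BUX)$ and $\operatorname{tr}(UBX)$ are symmetric with the combined center whenever $\operatorname{tr}(X)$ is symmetric (with appropriate center) — equivalently that $\operatorname{tr}(BU \cdot X)$ has center $C + (ab+1)/2$, which is again a case of Lemma~\ref{lem:charpol}(3). So in fact the whole thing closes up: strip $U$'s from the left using (2), then when you hit a $B$, strip $B$'s using (1), and the $BU$/(3) clause handles the seed verification where a $U$-run meets a $B$-run.

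The main obstacle I anticipate is purely organizational: getting the induction to ``bottom out'' correctly, i.e. making sure that each time I invoke Lemma~\ref{lem:charpol} its hypothesis (one application of the relevant generator gives the claimed shifted center) is something I have actually established rather than assumed. The safest fix is to prove a slightly stronger statement by induction on the length of the word: \emph{for every word $V$ in the alphabet $\{U, B\}$, $\operatorname{tr}(V)$ is symmetric with center $\tfrac{1}{2}(\#_U V) + \tfrac{ab}{2}(\#_B V)$}, where $\#_U, \#_B$ count occurrences. To prove this for $V = P_1 V'$ with $|V| = r$, split on whether $P_1 = U$ or $B$; if $P_1 = U$ and $V'$ starts with $U$ too, use (2) to reduce run length; if $P_1 = U$ and $V'$ starts with $B$ (or is empty), I need $\operatorname{tr}(U V')$ symmetric with the right center — here I use that $\operatorname{tr}(U V') = \operatorname{tr}(V' U)$, and $V'U$ ends in $U$; cyclic invariance of trace lets me rotate so that I am always peeling from a run of length $\geq 2$ or crossing exactly one generator boundary, where clause (3) (the $BU$ case) applies directly. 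Once this induction is correctly stated, each step is a one-line appeal to Lemma~\ref{lem:characteristic} and Lemma~\ref{lem:charpol}, and specializing to $V = U^{d_1} B U^{d_2} B \cdots U^{d_s} B$ with $ab = 2l$ gives center $\tfrac{1}{2}\sum d_i + l\cdot s = \tfrac{1}{2}\sum(d_i + 2l) = n/2$, proving the theorem.
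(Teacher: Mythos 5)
Your proposal is correct and follows essentially the same route as the paper: reduce the trace of the general word to the base cases $\operatorname{tr}(U\cdot B)$, $\operatorname{tr}(B)$, $\operatorname{tr}(U)$ and $\operatorname{tr}(I)$ by repeatedly invoking the three parts of Lemma~\ref{lem:charpol}, with Lemma~\ref{lem:characteristic} supplying the seeds. Your extra bookkeeping about how the induction ``bottoms out'' (induction on word length, cyclicity of the trace, the $BU$ clause handling the boundary between a $U$-run and a $B$-run) is exactly the detail the paper leaves implicit, so you have simply made the same argument more explicit.
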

\begin{proof} We can use Lemma~\ref{lem:charpol} to simplify the statement of our theorem:
\begin{itemize} \setlength\itemsep{0em}
    \item By Lemma~\ref{lem:charpol} (2), it is enough to consider the cases where each $d_i=0$ or $1$.
    \item If however some $d_i$=0, we get consecutive copies of $B$. By  Lemma~\ref{lem:charpol} (1), these cases can be simplified, so that we can assume all $d_i$ are equal to $1$, leaving us with $\operatorname{tr}( (U\cdot B)^k)$ for some $k$.
    \item By Lemma~\ref{lem:charpol} (3), this can be further reduced to the symmetry of $\operatorname{tr}(U\cdot B)$ and $\operatorname{tr}(I)$.
\end{itemize}
As we have already shown that the trace of $B\cdot U$ is symmetric in Lemma~\ref{lem:characteristic} and the trace of the identity matrix is just a constant, we are done.
\end{proof}

Note that when $l=1$, we recover the rank symmetry of gate posets. 

\begin{corollary} \label{cor:main} The rank polynomial of any chainlink poset is symmetric.
\end{corollary}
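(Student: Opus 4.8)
The plan is to derive Corollary~\ref{cor:main} directly from the structural results already assembled. By the Proposition on the matrix formulation (equation~\eqref{eq:matrixconnection}), for any chainlink poset $\CLP(\bar a,l)$ with $2l\le\min_i(a_i)$, setting $d_i=a_i-2l$ we have
\[
\rank(\CLP(\bar a,l);q)=\operatorname{tr}\bigl(U^{d_1}\cdot B\cdot U^{d_2}\cdot B\cdots U^{d_s}\cdot B\bigr),
\]
where $B=\drm(B_{2\times l}\!\nearrow)$, i.e. the box parameters are $a=2$, $b=l$. So the rank polynomial of an arbitrary chainlink poset is exactly an expression of the form covered by Theorem~\ref{prop:symmetry}, with that specific choice of $B$. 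Hence symmetry of the rank polynomial is an immediate instance of Theorem~\ref{prop:symmetry}.

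Concretely, the corollary's proof is one sentence: \emph{apply Theorem~\ref{prop:symmetry} with $(d_1,\dots,d_s)=(a_1-2l,\dots,a_s-2l)$ and box poset $B_{2\times l}\!\nearrow$.} The only things one should double-check are bookkeeping: that $\bar d$ is a legitimate weak composition (all $d_i\ge 0$, which is exactly the hypothesis $2l\le\min_i a_i$, so that $U^{d_i}$ makes sense — recall $U^0=I$), and that the degenerate small cases ($s=1$, or $l=0$ where $B$ becomes $U^{-1}\cdot(\text{something})$ — actually when $l=0$ the poset is a disjoint arrangement and one should note the statement is understood for $l\ge 1$, matching the definition of $\CLP$) are handled or excluded by the standing conventions. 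None of this requires new computation.

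I do not anticipate a genuine obstacle here, since all the work has been front-loaded into Lemma~\ref{lem:characteristic}, Lemma~\ref{lem:charpol}, and Theorem~\ref{prop:symmetry}. If I wanted to make the corollary self-contained I would simply restate the reduction chain from the proof of Theorem~\ref{prop:symmetry} — kill the $U^{d_i}$ with $d_i\ge 2$ and the repeated $B$'s via the Cayley--Hamilton recurrences, reducing to $\operatorname{tr}((UB)^k)$, then to $\operatorname{tr}(UB)$ and $\operatorname{tr}(I)$, both symmetric — but this just duplicates what is already proved. The one remark worth appending (and the excerpt already flags it) is that the $l=1$ specialization recovers the known rank symmetry of gate posets, hence also, via the polytope dictionary of Section~\ref{sec:4}, the identity $\#\CL^k(\bar a,1)=\#\CL^{n-k}(\bar a,1)$.
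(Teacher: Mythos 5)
Your proof is correct and is exactly the paper's (implicit) argument: the corollary follows immediately by combining the matrix formulation of Proposition~\eqref{eq:matrixconnection} (with $d_i=a_i-2l$ and $B=\drm(B_{2\times l}\!\nearrow)$) with Theorem~\ref{prop:symmetry}. The bookkeeping remarks you add (that $2l\le\min_i a_i$ guarantees $d_i\ge 0$) are consistent with the paper's standing hypotheses and require no further work.
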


Not we can use this machinery to prove our main theorem.
\begin{thm}\label{thm:main} Let $\bar{a}$ be a composition of $n$, let $l$ be a positive integer such that $2l \leq \operatorname{min} \{a_i\}_{i \in [s]}$ and let $t$ be a positive real number. Then complementary sections of the chainlink polytope have the same volume,
 \[|\HL^t(\bar{a},l)| = |\HL^{n-t}(\bar{a},l)|,\]
 where $|P|$ for a polytope denotes the relative volume. In fact, their quasi-Ehrhart poynomials are identical.
\end{thm}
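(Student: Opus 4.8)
The plan is to reduce the volume/Ehrhart statement for $\HL^t(\bar a,l)$ to the rank‐symmetry statement already proved in Corollary~\ref{cor:main} (equivalently Theorem~\ref{prop:symmetry}) via the dilation trick sketched in Section~\ref{sec:4}. First I would fix notation: write $\operatorname{Ehr}\HL^t(\bar a,l)(m)$ for the number of lattice points in the $m$-th dilate $m\cdot\HL^t(\bar a,l)$. The key observation is that $m\cdot\HL^t(\bar a,l)$ is the section at height $mt$ of the rescaled polytope $\HL(m\bar a, ml)$, since the defining inequalities $0\le x_i\le a_i$ and $x_i-x_{i+1}\le a_i-l$ scale homogeneously to $0\le y_i\le ma_i$, $y_i-y_{i+1}\le ma_i-ml$. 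Hence
\[
\operatorname{Ehr}\HL^t(\bar a,l)(m)=\#\HL^{mt}(m\bar a,\,ml).
\]
Note $2(ml)=m\cdot 2l\le m\cdot\min_i a_i=\min_i(ma_i)$, so the hypothesis is preserved under this rescaling, and $m\bar a$ is a composition of $mn$.

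Next I would invoke the poset interpretation: by the discussion following Definition~\ref{...} of chainlink posets (and the statement "the number of integer points in $\HL^t(a,l)$ is given by the coefficient of $q^t$ in the rank polynomial of $\CLP(a,l)$"), we have $\#\HL^{mt}(m\bar a,ml)=[q^{mt}]\,\rank(\CLP(m\bar a,ml);q)$. Now apply Corollary~\ref{cor:main}: the rank polynomial $\rank(\CLP(m\bar a,ml);q)$ is symmetric, and its degree is $mn$ (the sum of the $ma_i$), so
\[
[q^{mt}]\,\rank(\CLP(m\bar a,ml);q)=[q^{mn-mt}]\,\rank(\CLP(m\bar a,ml);q)=\#\HL^{m(n-t)}(m\bar a,ml)=\operatorname{Ehr}\HL^{\,n-t}(\bar a,l)(m).
\]
Since two quasi-polynomials agreeing at all positive integers $m$ are equal as quasi-polynomials, this gives $\operatorname{Ehr}\HL^t(\bar a,l)=\operatorname{Ehr}\HL^{\,n-t}(\bar a,l)$, and comparing leading terms yields the equality of relative volumes. (For the volume statement alone one does not even need integrality of $t$: the volume is the leading coefficient times $(\dim)!$, and one can take a sequence of rational $t$ or argue by continuity/density.)

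The main obstacle — and the place I would spend the most care — is justifying the identity $\operatorname{Ehr}\HL^t(\bar a,l)(m)=[q^{mt}]\rank(\CLP(m\bar a,ml);q)$ rigorously, i.e. that lattice points of $m\cdot\HL^t(\bar a,l)$ really are in bijection with size-$mt$ lower ideals of $\CLP(m\bar a,ml)$ with the grading given by coordinate-sum. This needs a clean description of the bijection between lattice points of $\HL(\bar b, k)$ and ideals of $\CLP(\bar b,k)$ (sending $x$ to the ideal whose $i$-th "column" has exactly $x_i$ elements removed from the top, using that $x_{i,a_i-l}\succeq x_{i+1,l}$ corresponds precisely to $x_i-x_{i+1}\le a_i-l$), together with the verification that this bijection is rank-preserving and that the half-integral vertices (Proposition~\ref{prop:vertices}) cause no trouble because we only ever count lattice points of \emph{integer} dilates $m\cdot\HL^t$, where $mt$ is an integer. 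Once that dictionary is in place, everything else is bookkeeping: the scaling of the inequalities, the preservation of $2l\le\min a_i$, and the transfer of symmetry from Corollary~\ref{cor:main}.
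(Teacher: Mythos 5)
Your proposal is correct and follows essentially the same route as the paper: dilate the section, identify $m\cdot\HL^t(\bar a,l)$ with $\HL^{mt}(m\bar a,ml)$, translate lattice-point counts into coefficients of the rank polynomial of $\CLP(m\bar a,ml)$, and invoke the rank symmetry of Corollary~\ref{cor:main}. The paper's proof is just a terser version of this argument; your added care about the lattice-point/ideal dictionary and the preservation of the hypothesis $2l\le\min_i a_i$ under dilation is exactly the bookkeeping the paper leaves implicit.
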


\begin{proof} Consider the polytope $k\HL(\bar{a},l)=\HL(k\bar{a},kl)$. The number of integer points in $\HL^t(k\bar{a},kl)$ are given by the coefficient of $q^t$ in the rank polynomial of the corresponding chainlink poset $\CLP(k\bar{a},kl)$. As by Corollary~\ref{cor:main}, the rank polynomial of any chainlink poset is symmetric, the number of integer points in $\HL^t(k\bar{a},kl)$ is the same as the number of integer points in $\HL^{n-t}(k\bar{a},kl)$ for any $k$. As a consequence, they have the same quasi-Ehrhart polynomial.
\end{proof}

\section{Unimodality and Multimodality }\label{Sec:Unimodality}

\subsection{Unimodality}

The recurrence relations from characteristic matrices have other applications as well. In this subsection, we prove the following result.

\begin{thm}\label{unimodality} Rank polynomials of circular fence posets $\bar{F}(\bar{a})$ are unimodal except when $\bar{a} = (a, 1, a, 1)$ or $(1, a, 1, a)$ for some positive integer $a$. 
\end{thm}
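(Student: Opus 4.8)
The plan is to reduce the unimodality of the circular rank polynomial $\overline{R}(\bar{a};q)$ to a statement about traces of products of the matrices $U$ and $B = \drm(B_{2\times l}\!\nearrow)$ built in the previous section, and then to exploit the recurrence relations coming from the characteristic polynomials of these $2\times 2$ matrices. First I would recall from Section~\ref{sec:4} that every circular fence poset whose composition could possibly fail to be unimodal has the shape \eqref{comp}, so that (after the reduction in the excerpt) it suffices to treat compositions of the form \eqref{comp2}, i.e.\ the gate posets, whose rank polynomial is $\operatorname{tr}\big(U^{d_1} B U^{d_2} B \cdots U^{d_s} B\big)$ with $B = \drm(B_{2\times 1}\!\nearrow) = \left[\begin{smallmatrix} q & 1 \\ q & 1\end{smallmatrix}\right]$ and $d_i = a_i - 2$. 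Since Theorem~\ref{prop:symmetry} already gives symmetry of this polynomial, unimodality is equivalent to unimodality of the ``first half'' of its coefficient sequence, and I would set up the argument so that multimodality forces a very rigid structure on $(d_1,\dots,d_s)$.

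The key technical step is to turn the matrix identities $U^2 = (q+1)U + q$ and $B^2 = \operatorname{tr}(B)\,B - \det(B) = (q+1)B - 0\cdot I$ (note $\det B = 0$ when $l=1$) into recurrences on the rank polynomials themselves, exactly as in Lemma~\ref{lem:charpol}. Because $\det B = 0$, strings of consecutive $B$'s collapse cheaply, and a run $U^{d}$ can be rewritten, via the $U$-recurrence, in terms of $U$ and $U^0 = I$; iterating, I expect to obtain a clean closed expression for $\operatorname{tr}((UB)^k)$ and hence, after also absorbing the $U^{d_i}$'s, a formula for $\overline{R}$ as an explicit non-negative combination of shifted copies of a small number of ``building block'' polynomials, each of which is palindromic and unimodal with a controlled centre of symmetry. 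A sum of palindromic unimodal polynomials with a \emph{common} centre of symmetry is again unimodal; so the only way to lose unimodality is for the centres to disagree, and tracking those centres (they are the half-integers $C + k(ab+1)/2$, etc., appearing in Lemma~\ref{lem:charpol}) pins down precisely which $(d_1,\dots,d_s)$ are dangerous. I would then show by a direct computation that the centres line up unless essentially all $d_i = 0$ with $s = 2$, which is exactly the exceptional family $\bar{a} = (a,1,a,1)$ or $(1,a,1,a)$, and conversely exhibit the known non-unimodal polynomial $\overline{R}((1,1,1,1);q) = 1 + 2q + q^2 + 2q^3 + q^4$ and its $a$-analogues to confirm sharpness.

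The main obstacle I anticipate is the combinatorial bookkeeping in the middle step: making the ``sum of palindromic unimodal pieces with matching centres'' argument genuinely rigorous requires either (i) an explicit, manifestly non-negative formula for the coefficients of each building block (e.g.\ in terms of $q$-binomials or Gaussian polynomials, whose unimodality is classical by Sylvester), together with a proof that the shifts induced by the $U^{d_i}$ factors preserve the common centre, or (ii) a more clever induction on $s$ and on $\sum d_i$ that peels off one factor at a time while maintaining unimodality as an invariant — and the subtlety is that peeling off a single $U$ or $B$ need not preserve unimodality on its own, so one must carry a slightly stronger inductive hypothesis (e.g.\ about the sign pattern of successive differences of the coefficient vector, or a ``$\ell$-positivity'' type statement in the sense of \cite{main}). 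I would aim for approach (i) when $l = 1$ since $\det B = 0$ makes the algebra tractable, and fall back on the stronger-hypothesis induction (ii) to handle the boundary cases and the general $b_i \geq 1$ shapes from \eqref{comp}. Finally, I would double-check the exceptional cases and the edge cases $s \le 2$ by hand, since the matrix-product formalism degenerates there and the genuinely multimodal examples all live in that corner.
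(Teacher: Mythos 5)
Your proposal has several genuine gaps. First, the reduction to gate posets is not available: the result you import from \cite{main} only narrows the non-unimodal candidates to compositions of the form \eqref{comp} with $b_i\geq 1$, and when some run of $1$'s has even length (e.g.\ $(2,1,1,2,1,1)$) no cyclic shift puts the composition into the gate form \eqref{comp2}, so the $\operatorname{tr}(U^{d_1}B\cdots)$ presentation does not apply to all remaining cases. This is exactly why the paper's proof abandons the box matrix and works instead with the down-step matrix $D$ and the presentation $\overline{R}(\bar a;q)=\operatorname{tr}(D^{a_1}U^{a_2}\cdots D^{a_{s-1}}U^{a_s})$, which is valid for every circular fence. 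Second, your matrix for $\drm(B_{2\times 1}\!\nearrow)$ is incorrect: the correct matrix is $\begin{bmatrix} q+q^2 & 1\\ q & 1\end{bmatrix}$, whose determinant is $q^{2}$ (in particular nonzero), so consecutive $B$'s do not ``collapse cheaply'' and the recurrence $\operatorname{tr}(B^kX)=\operatorname{tr}(B)\operatorname{tr}(B^{k-1}X)-\det(B)\operatorname{tr}(B^{k-2}X)$ keeps its signed second term. Third, your description of the exceptional family is off: $(a,1,a,1)$ corresponds to $s=2$ with $d_1=d_2=a-1$ arbitrary, not to ``all $d_i=0$''.

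The deeper problem is the step you yourself flag as the main obstacle. The characteristic-polynomial recurrences inevitably produce \emph{signed} combinations, and there is no reason to expect $\overline{R}$ to be a non-negative sum of palindromic unimodal blocks with a common centre --- indeed, if such a positive concentric decomposition existed uniformly, the exceptional family could not exist at all. The paper gets around this with a mechanism absent from your plan: it proves the identity $DUD=DU+UD-U+D^3-D^2$ and, in Proposition~\ref{prop:simplify}, exploits the fact that all the polynomials involved are palindromic, so that unimodality of a symmetric polynomial of degree $2n$ reduces to the single inequality $[q^n]\geq[q^{n-1}]$; the symmetric summands appearing with either sign contribute \emph{equally} to $[q^n]$ and $[q^{n-1}]$ and therefore cancel in the difference, leaving an inequality that follows from unimodality of a strictly shorter composition (handled by the known cases: odd length, two adjacent parts $>1$, or $a,1,b$ with $|a-b|>1$). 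This ``check only the middle coefficient and let symmetry absorb the negative terms'' idea, together with the final case analysis over compositions with all parts in $\{1,2\}$, is what actually closes the argument; without it, or an explicit manifestly positive formula that you have not supplied, the proposal does not go through.
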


We define the matrix for a \definition{down step} denoted by $D$ as follows:

    $$	\displaystyle D:=\begin{bmatrix}1+q&-q\\1&0
			\end{bmatrix}.$$

The following lemma is an easy consequence of the work in \cite{ops}. The interested reader is referred there to learn about how down steps fit into the framework of oriented posets.

\begin{lemma} Let $DC_n$ denote a decreasing chain, an $n$-element chain poset oriented by taking the maximal vertex as the target and the minimal vertex as the source. Then we have,
$$ \drm(DC_n\!\nearrow)=D^{n-1}\cdot U.$$
\end{lemma}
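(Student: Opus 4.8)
The plan is to verify the identity by computing both sides explicitly and matching them entrywise. First I would compute the rank matrix $\drm(DC_n\!\nearrow)$ directly from the definition. Label the elements of the $n$-element chain $v_1 \prec v_2 \prec \cdots \prec v_n$, so that the source is $x_R = v_1$ (the minimum) and the target is $x_L = v_n$ (the maximum). The lower ideals are exactly the initial segments $I_j = \{v_1,\ldots,v_j\}$ for $0 \le j \le n$, each contributing $q^{|I_j|} = q^j$ to the rank polynomial. The membership conditions then cut out index ranges: $x_R \in I_j$ means $j \ge 1$, $x_R \notin I_j$ means $j = 0$, and $x_L \notin I_j$ means $j \le n-1$. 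Summing $q^j$ over the appropriate ranges gives
\[
\drm(DC_n\!\nearrow)=\begin{bmatrix} q[n]_q & 1 \\ q[n-1]_q & 1\end{bmatrix},
\]
where the $(1,1)$ entry sums $j$ from $1$ to $n$, the $(2,1)$ entry sums $j$ from $1$ to $n-1$, and both right-hand entries come from the single ideal $I_0 = \varnothing$ (with the convention $[0]_q = 0$ handling the $n=1$ corner case, where $x_R$ and $x_L$ coincide).

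Next I would establish that $D^{n-1}U$ equals this same matrix, by induction on $n$. The base case $n=1$ reads $D^0 U = U = \drm(DC_1\!\nearrow)$, which holds since for a one-element chain source and target coincide. For the inductive step, I would left-multiply the claimed form by $D$ and simplify:
\[
D\begin{bmatrix} q[n]_q & 1 \\ q[n-1]_q & 1\end{bmatrix}=\begin{bmatrix} 1+q & -q \\ 1 & 0\end{bmatrix}\begin{bmatrix} q[n]_q & 1 \\ q[n-1]_q & 1\end{bmatrix}=\begin{bmatrix} q[n+1]_q & 1 \\ q[n]_q & 1\end{bmatrix}.
\]
The $(1,1)$ entry uses the identity $(1+q)[n]_q - q[n-1]_q = [n]_q + q^n = [n+1]_q$, and the $(1,2)$ entry uses $(1+q)-q = 1$; the bottom row is immediate. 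This is precisely the claimed form with $n$ replaced by $n+1$, closing the induction.

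Comparing the two computations yields $\drm(DC_n\!\nearrow) = D^{n-1}U$, as desired. I expect no serious obstacle here: the content is elementary, and the only points demanding care are getting the membership conditions and summation ranges right in the direct enumeration, and the $q$-integer bookkeeping $[n+1]_q = [n]_q + q^n$ in the induction. Conceptually, the reason the induction telescopes so cleanly is that $D$ has characteristic polynomial $(\lambda-1)(\lambda-q)$, i.e.\ eigenvalues $1$ and $q$; one could alternatively diagonalize $D$ and read off $D^{n-1}$ in closed form before multiplying by $U$, but the one-step induction above is the shortest route.
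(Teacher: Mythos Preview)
Your proof is correct. The direct computation of $\drm(DC_n\!\nearrow)$ from the definition is accurate, the base case is handled properly (including the degenerate situation where $x_R=x_L$ when $n=1$), and the inductive step checks out entrywise; the identity $(1+q)[n]_q - q[n-1]_q = [n+1]_q$ is exactly the right telescoping.

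As for comparison: the paper does not actually prove this lemma. It simply states that the result ``is an easy consequence of the work in \cite{ops}'' and moves on. Your argument is therefore more than the paper provides here---a self-contained verification rather than a pointer to an external reference. The approach you take (compute the rank matrix explicitly, then match it to $D^{n-1}U$ by induction on $n$) is the natural elementary route and requires nothing from the cited source.
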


That means the above theorem may be restated as follows:
\begin{reptheorem}{unimodality}
For any composition $\bar{a}$ with an even number of parts the following polynomial is unimodal except when $\bar{a} = (a, 1, a, 1)$ or $(1, a, 1, a)$ for some positive inteİger $a$:
$$\overline{R}(\bar{a},q)=\operatorname{tr}(D^{a_1}U^{a_2}D^{a_3}U^{a_4}\cdots D^{a_{s-1}}U^{a_s})=\operatorname{tr}(U^{a_1}D^{a_2}U^{a_3}D^{a_4}\cdots U^{a_{s-1}}D^{a_s}).$$
\end{reptheorem}

In \cite{main} where this was first stated as a conjecture, it was shown that if the size of the composition is odd the resulting polynomial is indeed unimodal. It was further shown that unimodality holds for even size compositions containing two adjacent parts larger than $1$ or three  adjacant parts $a,1,b$ with $|a-b|>1$. One can also easily show unimodality by direct calculation for the cases of $\bar{a}=(a,b)$ and $\bar{a}=(a,1,a+b,1)$ with $a,b\geq 1$. In this section, we will settle the outstanding cases using recurrence identities similar to those described in the previous section. Our first identity will be the following.

\begin{equation} \tag{Id 1}
    DUD = DU + UD - U + D^3 - D^2 .
\end{equation}

On the rank polynomial level, (Id 1) translates to the following:

\begin{align*} \tag{Id 1$'$}
\overline{R}((a, 1, b, X);q)=& \overline{R}((a-1, 1, b, X);q) + \overline{R}((a, 1, b-1, X);q) \\ -&  \overline{R}((a-1, 1, b-1, X);q) +  \overline{R}((a+b+1,X);q) \\-& \overline{R}((a+b, X);q).
\end{align*}

Here $X$ can be any odd-length composition Note that we allow $a=1$ or $b=1$, in which case, with the assumption that a zero part means the parts to the left and right combine.

\begin{prop}\label{prop:simplify}
For an odd-length sequence $X=(x_1,x_2,\ldots,x_k)$ of positive integers suppose that  $a,b \geq 1$ and $\bar{R}(a-1, 1, b-1, X)$ is unimodal. If $a>1$ or $\ell(X)>1$ with $x_1>1$ pr  $b\geq x_2$  then $\bar{R}(a, 1, b, X)$ is also unimodal.
\end{prop}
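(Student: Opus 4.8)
The plan is to run an induction on the size of $X$ (equivalently on $\ell(X)$), using Identity 1$'$ as the engine and the two auxiliary facts available to us: first, that rank polynomials of compositions $(a+b+1, X)$ and $(a+b, X)$ of \emph{shorter} effective length can be assumed unimodal by an outer induction (or handled by the base cases catalogued in \cite{main}), and second, the symmetry of $\bar R(\bar a; q)$ from Theorem~\ref{thm:sym}, which pins down the centers of symmetry of all the terms appearing in Id~1$'$. The idea is that Id~1$'$ writes $\bar R((a,1,b,X);q)$ as a sum of five rank polynomials, four of which are each symmetric (hence unimodal iff they have nonnegative coefficients in the right pattern) and the fifth, $\bar R((a-1,1,b-1,X);q)$, is unimodal by the standing hypothesis of the Proposition. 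A sum of symmetric unimodal polynomials with the same center is again symmetric and unimodal; the subtlety is the minus signs.

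First I would record the centers of symmetry. By Theorem~\ref{thm:sym} each $\bar R$ is symmetric, and its center is half the number of nodes of the circular fence, i.e. half the sum of the composition. So $\bar R((a,1,b,X);q)$ has center $(a+b+1+|X|)/2$, while $\bar R((a-1,1,b,X);q)$ and $\bar R((a,1,b-1,X);q)$ have center $(a+b+|X|)/2$, and $\bar R((a-1,1,b-1,X);q)$ and $\bar R((a+b,X);q)$ have center $(a+b-1+|X|)/2$, and finally $\bar R((a+b+1,X);q)$ has center $(a+b+1+|X|)/2$ — the same as the left-hand side. The natural regrouping is therefore
\[
\bar R((a,1,b,X)) = \Big[\bar R((a+b+1,X)) - \bar R((a+b,X))\Big] + \Big[\bar R((a-1,1,b,X)) - \bar R((a-1,1,b-1,X))\Big] + \Big[\bar R((a,1,b-1,X)) - \bar R((a-1,1,b-1,X))\Big] + \bar R((a-1,1,b-1,X)),
\]
wait — one must be careful that $\bar R((a-1,1,b-1,X))$ appears with coefficient $-1$, not $-2$, so the honest grouping keeps one copy negative; I would instead pair $-\bar R((a+b,X))$ with the positive term of one higher center and show each bracket is a symmetric unimodal polynomial with \emph{nonnegative} coefficients, using the matrix identities $DU^{c}D - DU^{c-1}D = \ldots$ expansions (i.e.\ $D U^{k} D - D U^{k-1} D$) to see that consecutive differences of these traces are themselves traces of nonnegative-coefficient matrix words, hence have nonnegative coefficients.

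The key steps in order: (1) fix the centers as above and verify via Theorem~\ref{thm:sym} that the grouping respects symmetry; (2) show $\bar R((a+b+1,X);q) - \bar R((a+b,X);q)$ has nonnegative coefficients and is symmetric about $(a+b+1+|X|)/2$ — this is a ``single down-step added'' statement and should follow from $\rmm$-level identity $D^{c+1} - D^{c} = (D-I)D^{c}$ composed with the rest of the word, together with an induction on $\ell(X)$ for unimodality of the shorter pieces; (3) likewise show $\bar R((a,1,b,X);q) - \bar R((a-1,1,b,X);q)$ and the $b$-analogue are nonnegative and correctly centered — here the hypotheses ``$a>1$'' or ``$\ell(X)>1$ with $x_1>1$'' or ``$b\ge x_2$'' enter, because they are exactly the conditions guaranteeing that the relevant difference does not dip below zero at the tails (the excluded families $(a,1,a,1)$ being precisely where such a difference fails); (4) assemble: the left side is a sum of a symmetric unimodal polynomial (the hypothesis term $\bar R((a-1,1,b-1,X))$) and several symmetric nonnegative polynomials with the \emph{same} center, so the sum is symmetric and unimodal.

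The main obstacle I expect is step (3): controlling the sign of the differences like $\bar R((a,1,b,X)) - \bar R((a-1,1,b,X))$ near the ends of the coefficient sequence. Nonnegativity of these differences is essentially a combinatorial monotonicity statement (adding a node to a circular fence does not decrease rank-counts in the appropriate range), but making it precise at the tails is exactly where the exceptional compositions $(a,1,a,1)$ obstruct things, so the case analysis driven by the three alternative hypotheses ``$a>1$'', ``$x_1>1$'', ``$b\ge x_2$'' will be the delicate part. I would handle it by passing to the matrix formulation, writing each difference as $\operatorname{tr}$ of a matrix word in $U,D$ with one factor replaced by $D-I$ or $U-I$, and then using the explicit nonnegativity of entries of products of $U$'s and $D$'s (after a suitable change of basis, e.g. conjugating so that $D$ becomes a nonnegative matrix) to read off nonnegativity of coefficients; the boundary conditions translate into requiring at least one genuine $U$-block or a long enough $D$-block adjacent to the modified factor, which is precisely what the hypotheses supply.
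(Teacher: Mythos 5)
Your starting point (Id~1$'$ plus bookkeeping of the centers of symmetry) is the same as the paper's, but the execution has a genuine gap. The decisive ingredient you are missing is the reduction, inherited from \cite{main}, that for these rank polynomials the coefficient sequence is already known to be nondecreasing up to the penultimate index before the center, so that unimodality of the symmetric polynomial $\bar R((a,1,b,X);q)$ of degree $2n$ is equivalent to the \emph{single} central inequality $[q^n]\ge[q^{n-1}]$. Once the problem is reduced to comparing these two coefficients, Id~1$'$ finishes it almost for free: the terms $\bar R((a-1,1,b,X))$, $\bar R((a,1,b-1,X))$ and $\bar R((a+b,X))$ are symmetric about the half-integer $n-\tfrac12$, so each contributes $0$ to $[q^n]-[q^{n-1}]$; the term $-\bar R((a-1,1,b-1,X))$ contributes $[q^{n-1}]-[q^n]\ge 0$ by the unimodality hypothesis (peak at $n-1$); and all that remains is the central non-dip of $\bar R((a+b+1,X))$, which is exactly where the alternatives $a>1$, $x_1>1$, $b\ge x_2$ are used --- they force $(a+b+1,X)$ into one of the classes already known to be unimodal (a two-part composition, two adjacent parts $>1$, or a trio $c,1,d$ with $|c-d|\ge 2$). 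You have misplaced the role of these hypotheses, assigning them to a tail-positivity statement instead.

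Without that reduction, your plan does not go through as stated. Your brackets do not have matching centers: $\bar R((a+b+1,X))$ is centered at $n$ while $\bar R((a+b,X))$ is centered at $n-\tfrac12$, so their difference is not symmetric and the ``sum of symmetric unimodal polynomials with the same center'' principle does not apply to it; the same problem afflicts the pairing of $\bar R((a-1,1,b,X))$ with $\bar R((a-1,1,b-1,X))$. Moreover, the coefficientwise nonnegativity you want for differences such as $\bar R((a+b+1,X))-\bar R((a+b,X))$ or $\bar R((a,1,b,X))-\bar R((a-1,1,b,X))$ is a much stronger monotonicity claim than anything the argument needs, and your proposed justification (conjugating so that words in $U$, $D$, and $D-I$ become entrywise nonnegative) is speculative --- $D$ has a negative entry and no such positivity is established anywhere in the paper. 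The fix is not to strengthen the positivity claims but to weaken the goal: prove only the central inequality, let the half-integer-centered terms cancel by symmetry, and spend the hypotheses on the unimodality of $\bar R((a+b+1,X))$.
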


\begin{proof}
As unimodality holds for compositions of odd size, we can focus on the case where $2n = |(a, 1, b, X)|$ for some $n$. We can further assume that $|a-b|$ is at most $1$, as otherwise again, we know we have unimodality.

Let $\overline{R}(a-1, 1, b-1, X)$ be unimodal. Then it has a peak at $n-1$. Thus, 
\[[q^{n}]\left( -\overline{R}(a-1, 1, b-1, X) \right)\leq [q^{n-1}] \left(-\overline{R}(a-1, 1, b-1, X)\right).\]
We also have the following by the symmetry of rank polynomials:
\[[q^{n}] \left( \overline{R}(a-1, 1, b, X) + \overline{R}(a, 1, b-1, X)\right) = [q^{n-1}]\left(  \overline{R}(a-1, 1, b, X) + \overline{R}(a, 1, b-1, X)\right),\]

By (Id 1$'$), all that is left to show is that:
\[[q^{n}]\left( \overline{R}(a+b+1, X) - \overline{R}(a+b, X)\right) \geq [q^{n-1}]\left( \overline{R}(a+b+1, X) - \overline{R}(a+b, X)\right).\]
By the symmetry negative terms are equal and this is equivalent to showing that $\overline{R}(a+b+2,X)$ is unimodal. We consider the following cases:
\begin{itemize} \setlength\itemsep{0em}
    \item If $X$ has one part only, $a>1$ and $b=1$, then we end up with a $2$ part composition that is unimodal.
    \item Otherwise if $x_1>1$ then we have two consecutive pieces greater than $1$ as $a+b+1\geq 1$ which gives unimodality.
    \item If $x_1=1$ and $b\geq x_2$ we get a trio $a+b+1, 1,x_2$ with difference between $a+b+1$ and $x_2$ at least $2$, which gives us unimodality.
    \item Finally, if $a>1$ even if $b=x_2-1$, difference between $a+b+1$ and $x_2$ at least $2$ so we have unimodality.
\end{itemize}
\end{proof}

\begin{proof}[Proof of Theorem~\ref{unimodality}] As the oter cases are already resolved, we will focus our attention to the case of $\bar{a}$ having at least $6$ parts. By Proposition~\ref{prop:simplify} and the preceeding work, it is sufficient to show unimodality when all parts of $\bar{a}$ are $2$ or $1$. We can further suppose we have no consecutive $2,2$ or $2,1,2$ as the former is not unimodal, and the latter can be simplified. Then $\bar{a}$ either contains consecutive parts $2,1,1,2$ or $1,1,1,2$, or it consists entirely of $1$s.

If $\bar{a}$ contains consecutive parts $2,1,1,2$ then  $\bar{a}=(2,1,1,2,1,1,X)$ for some $X$ by our assumptions. As $\overline{R}((1,3,1,1,X);q)$ is unimodal, so is $\overline{R}(\bar{a};q)$.

If $\bar{a}$ contains consecutive parts $1,1,1,2$ then either $\bar{a}=(1,1,1,2,1,1)$ or$\bar{a}=(X,b,1,1,1,2,1,1)$ for some $X$ and for some $b \in \{1,2\}$ by our assumptions. The former case can be directly calculated. For the latter case, we can use Proposition~\ref{prop:simplify} with the three $1$s in the middle. As $\overline{R}((X,b+3,1,1);q)$ is unimodal, so is $\overline{R}(\bar{a};q)$.

That only leaves the case where $\bar{a}$ contains $1$'s only. Again it is easy to show $\overline{R}((1,1,1,1,1,1);q)$ is unimodal by direct calculation. Otherwise  $\bar{a}=(1,1,1,1,1,1,1,X)$ for some $X$. As $\overline{R}((3,1,1,X);q)$ is unimodal, so is $\overline{R}(\bar{a};q)$ by Proposition~\ref{prop:simplify}.
\end{proof}

\subsection{Multimodality}

Though the rank polynomials corresponding to chainlink posets are almost always unimodal, one can observe multimodality in a slightly more general setting. consider \definition{stretched chainlink polytopes}, obtained by adding the same number of parts of size $l$ between the $a_i$. In other words, the $k$-stretch of $\HL(\bar{a},l)$ is given by $\HL((a_1,l^k,a_2,l^k,\ldots,a_s,l^k),l)$. One can also define the corresponding \definition{stretched chainlink posets}. The $1$-stretched version of $\CLP((6,4,5),2)$ from Figure~\ref{fig:posetexample} is depicted  in Figure~\ref{fig:stretchedposetexample}.

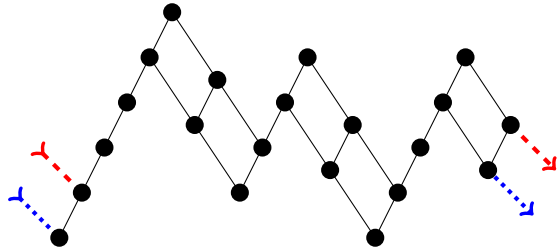
\begin{figure}
    \centering
    \begin{tikzpicture}[scale=.6]
		\node  (1) at (-16.5, 1) {};
		\node  (2) at (-16, 2) {};
		\node  (3) at (-17, 0) {};
		\node  (4) at (-15.5, 3) {};
		\node  (5) at (-15, 4) {};
		\node  (6) at (-14.5, 5) {};
		\node  (7) at (-13, 1) {};
		\node  (8) at (-12.5, 2) {};
		\node  (9) at (-12, 3) {};
		\node  (10) at (-11.5, 4) {};
		\node  (11) at (-10, 0) {};
		\node  (12) at (-9.5, 1) {};
		\node  (13) at (-9, 2) {};
		\node  (14) at (-8.5, 3) {};
		\node  (15) at (-8, 4) {};
		 \draw[->,red,dashed,ultra thick] (-7,2.5) to (-6,1.5);
		\draw[>-,red,dashed,ultra thick] (-17.5,2)--(-16.5,1);
		\draw[->,blue,dotted, ultra thick] (-7.5,1.5) to (-6.5,.5);
		\draw[>-,blue,dotted,ultra thick] (-18,1)--(-17,0);
		\fill(-16.5,1) circle(.2); 
	    \fill(-16,2) circle(.2); 
		\fill(-17,0) circle(.2); 
		\fill(-10.5, 2.5) circle(.2);
		\fill(-11, 1.5) circle(.2);
		\fill(-13.5,3.5) circle(.2);
		\fill(-14, 2.5) circle(.2);
		\fill(-15.5,3) circle(.2); 
	    \fill(-15,4) circle(.2); 
		\fill(-14.5,5) circle(.2); 
		\fill(-13,1) circle(.2); 
	    \fill(-12.5,2) circle(.2); 
		\fill(-12,3) circle(.2); 
		\fill(-11.5,4) circle(.2); 
	    \fill(-10,0) circle(.2); 
		\fill(-8,4) circle(.2); 
		\fill(-9.5,1) circle(.2); 
	    \fill(-9,2) circle(.2); 
		\fill(-8.5,3) circle(.2); 
		\fill(-7,2.5) circle(.2); 
		\fill(-7.5,1.5) circle(.2); 
		\draw (3.center) to (6.center);
		\draw (6.center) to (8.center);
		\draw (5.center) to (7.center);
		\draw (7.center) to (10.center);
		\draw (10.center) to (12.center);
		\draw (9.center) to (11.center);
		\draw (11.center) to (15.center);
		\draw (-8,4)--(-7,2.5)--(-7.5,1.5) (-8.5,3)--(-7.5,1.5) (-10.5, 2.5)--(-11,1.5) (-14,2.5)--(-13.5,3.5);
    \end{tikzpicture}
    \caption{The $1$-streched chainlink poset with $\bar{a}=(6,4,5)$ and $l=2$.}
    \label{fig:stretchedposetexample}
\end{figure}

The following corollary follows from Theorem~\ref{prop:symmetry} 

\begin{corollary} The rank polynomial of any stretched chainlink poset is symmetric.
\end{corollary}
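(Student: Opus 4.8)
The plan is to reduce the statement for stretched chainlink posets directly to Theorem~\ref{prop:symmetry}, in exactly the same way that Corollary~\ref{cor:main} follows from that theorem. First I would recall the matrix model from Equation~\eqref{eq:matrixconnection}: the rank polynomial of $\CLP(\bar{a},l)$ with $2l\leq\min_i a_i$ is $\operatorname{tr}(U^{d_1}\cdot B\cdot U^{d_2}\cdot B\cdots U^{d_s}\cdot B)$ where $d_i=a_i-2l$ and $B=\drm(B_{2\times l}\!\nearrow)$. The only new input is to identify what the matrix product looks like for a stretched chainlink poset $\CLP((a_1,l^k,a_2,l^k,\ldots,a_s,l^k),l)$.

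The key observation is that inserting $k$ parts equal to $l$ between consecutive $a_i$ means, in the matrix product, inserting $k$ factors corresponding to chains of $l-2l$\dots{}\,—\,wait, more carefully: a part of size $l$ contributes $d=l-2l<0$, so I should not naively apply the $d_i=a_i-2l$ formula to the inserted parts. Instead I would argue structurally: a stretched chainlink poset is built, as in the proof of Proposition~\ref{eq:matrixconnection}, by taking chains and gluing the $l$ maxima of one to the $l$ minima of the next; the inserted parts of size $l$ simply contribute additional box posets $B_{2\times l}\!\nearrow$ glued end to end with \emph{no} up-step chain between them (since $a_i=l$ forces $d_i=-l$, i.e.\ the boxes overlap/abut directly). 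Concretely, each inserted part of size $l$ adds one extra factor of $B$ to the product, so the rank polynomial of the $k$-stretch is $\operatorname{tr}(U^{d_1}\cdot B^{k+1}\cdot U^{d_2}\cdot B^{k+1}\cdots U^{d_s}\cdot B^{k+1})$ with $d_i=a_i-2l\geq 0$. One should double-check this count on the running example in Figure~\ref{fig:stretchedposetexample} (the $1$-stretch of $\CLP((6,4,5),2)$): this should match $\operatorname{tr}(U^{2}B^{2}\,U^{0}B^{2}\,U^{1}B^{2})$, i.e.\ the boxes come in consecutive pairs.

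Once the matrix expression is in hand, Theorem~\ref{prop:symmetry} applies verbatim: that theorem asserts that $\operatorname{tr}(U^{e_1}\cdot B\cdot U^{e_2}\cdot B\cdots U^{e_m}\cdot B)$ is symmetric for \emph{any} weak composition $(e_1,\ldots,e_m)$ and any box matrix $B$. Writing $B^{k+1}$ as $B\cdot U^{0}\cdots U^{0}\cdot B$ (inserting zero exponents, which the theorem allows since it takes weak compositions) exhibits the stretched rank polynomial in exactly the form covered by Theorem~\ref{prop:symmetry}, so symmetry follows immediately. Alternatively, and perhaps more cleanly, one can invoke Lemma~\ref{lem:charpol}(1) to absorb the powers $B^{k+1}$: it already tells us that consecutive copies of $B$ preserve the symmetry-plus-center structure, so the reduction to the $d_i\in\{0,1\}$, single-$B$ case in the proof of Theorem~\ref{prop:symmetry} goes through unchanged.

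The main obstacle is entirely at the first step: correctly bookkeeping how the stretching operation translates into the matrix product, in particular justifying that an inserted part of size exactly $l$ contributes precisely one extra $B$-factor and no $U$-factor, and that the condition $2l\leq\min_i a_i$ (needed for the $B_{2\times l}$ decomposition to be valid and for the glued vertices to be distinct) is preserved under stretching — which it is, since the inserted parts equal $l$ and $2l\leq l$ fails, so one must instead observe that these parts play the role of the "overlap" rather than a genuine segment, exactly as in the $d_i=0$ collapsing argument. After that the proof is a one-line appeal to Theorem~\ref{prop:symmetry}, and I would keep it short: "This is immediate from Theorem~\ref{prop:symmetry}, since the rank polynomial of the $k$-stretch of $\CLP(\bar{a},l)$ is $\operatorname{tr}(U^{d_1}B^{k+1}U^{d_2}B^{k+1}\cdots U^{d_s}B^{k+1})$ with $d_i=a_i-2l$, which has the form covered by that theorem upon inserting zero exponents between the copies of $B$."
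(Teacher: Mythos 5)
Your overall strategy --- identify the matrix product realizing the stretched poset and then quote Theorem~\ref{prop:symmetry} --- is the right one, and it is evidently the route the paper intends (it offers no written proof beyond ``follows from Theorem~\ref{prop:symmetry}''). But the matrix identity you commit to is wrong, and it fails at exactly the step you yourself flagged as the main obstacle. An inserted part of size $l$ does \emph{not} contribute an extra disjoint copy of $B_{2\times l}\!\nearrow$ glued corner-to-corner to its neighbours (which is what an extra factor of $B$ in the product means, and which would add $2l$ new elements per insertion): it contributes only $l$ new elements, and these sit as a new \emph{middle column} of the existing link. Concretely, in the $k$-stretch the $l$ maxima of the chain for $a_i$, the $k$ inserted $l$-element chains, and the $l$ minima of the chain for $a_{i+1}$ assemble into a single $(k+2)\times l$ grid, i.e.\ the box poset $B_{(k+2)\times l}\!\nearrow$. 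Your expression $\operatorname{tr}(U^{d_1}B^{k+1}\cdots U^{d_s}B^{k+1})$ is therefore the rank polynomial of a different poset, one with $n+2skl$ elements rather than the $n+skl$ elements of the stretched chainlink poset; it is indeed a symmetric polynomial, but of the wrong degree, so it does not prove the corollary. The sanity check you proposed but did not carry out would have caught this: for the $1$-stretch of $\CLP((6,4,5),2)$ your product has degree $2+0+1+6\cdot 4=27$, whereas Figure~\ref{fig:stretchedposetexample} has $21$ nodes.

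The correct identity is
\[
\rank\bigl(\text{$k$-stretch of }\CLP(\bar a,l);q\bigr)=\operatorname{tr}\bigl(U^{d_1}\cdot B'\cdot U^{d_2}\cdot B'\cdots U^{d_s}\cdot B'\bigr),\qquad B'=\drm\bigl(B_{(k+2)\times l}\!\nearrow\bigr),\quad d_i=a_i-2l,
\]
whose degree $\sum_i d_i+s(k+2)l=n+skl$ is the right one; for instance for $\bar a=(2,2)$, $l=1$, $k=1$ it gives $\operatorname{tr}\bigl(\drm(B_{3\times 1}\!\nearrow)^2\bigr)=1+2q+3q^2+2q^3+3q^4+2q^5+q^6$, which agrees with a direct count of the lattice points of $\HL((2,1,2,1),1)$ slice by slice. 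Once this identification is made, your concluding step goes through verbatim: Theorem~\ref{prop:symmetry} is stated for an arbitrary box $B_{a\times b}\!\nearrow$, not only for $B_{2\times l}\!\nearrow$ --- this generality is presumably there precisely for this corollary --- so applying it with $a=k+2$ and $b=l$ gives the symmetry. So the fix is small, but as written the key bookkeeping step is incorrect and the argument does not establish the statement.
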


Using this, one can extend the main theorem to show stretched chainlink polytopes also enjoy complementary sections with the same Ehrhart quasipolynomial. They behave differently in terms of unimodality though. 

In the unstretched case, $\CLP((2,2),1)$ is the smallest non-unimodular example and it has two peaks.
Let us see how stretching affects the rank sequence:
\begin{itemize} \setlength\itemsep{0em}
    \item No stretch: $[1,2,1,2,1]$.
    \item $1$-stretch: $[1,2,3,2,3,2,3,2,1]$.
    \item $2$-stretch: $[1,2,3,4,3,4,3,4,3,4,3,2,1]$.
     \item $3$-stretch: $[1,2,3,4,5,4,5,4,5,4,5,4,5,4,3,2,1]$.¨
     \item $k$-stretch: $[1,2,3,\ldots,k+2,\overbrace{k+1,k+2}^{k+1},k+1,k,\ldots,2,1]$.
     
\end{itemize}

\begin{obs} The $k$-stretch of the chainlink poset $\CLP((2,2),1)$ has $k+2$ peaks.
\end{obs}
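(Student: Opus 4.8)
The statement concerns only the shape of the rank sequence of the $k$-stretch, so the plan is to first pin down that sequence and then count its strict local maxima. For the first step I would record that the $k$-stretch of $\CLP((2,2),1)$ is $\CLP((2,1^k,2,1^k),1)$ and, via the matrix formulation of the previous section — its rank polynomial being the trace of a suitable word in the matrices $U$ and $B=\drm(B_{2\times1}\!\nearrow)$, cf. the Corollary on stretched chainlink posets — that its rank sequence is exactly the one displayed in the list above,
$$r^{(k)}=\bigl(1,2,\ldots,k+2,\ \underbrace{k+1,\,k+2,\ \ldots,\ k+1,\,k+2}_{k+1\ \text{copies of }(k+1,k+2)},\ k+1,k,\ldots,2,1\bigr).$$
This would be confirmed by induction on $k$: the characteristic-polynomial recurrences of Lemma~\ref{lem:charpol} relate the trace defining the $(k{+}1)$-stretch to those of the $k$- and $(k{-}1)$-stretches, and the displayed sequences satisfy the resulting linear recurrence together with the base cases $k=0,1$.

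For the second step, note that consecutive entries of $r^{(k)}$ always differ by $1$, so a peak (a coefficient strictly larger than both of its neighbours) is unambiguous, and I would simply inspect the three blocks of $r^{(k)}$. In the opening strictly increasing run $1<2<\cdots<k+2$, only the final entry $k+2$ is a peak, since its right neighbour is the leading $k+1$ of the middle block. In the middle block the entries alternate between $k+1$ and $k+2$; each of its $k+1$ copies of $k+2$ lies between two $k+1$'s — the right neighbour of the last copy being the $k+1$ that opens the closing block — and hence is a peak, while each occurrence of $k+1$ there is a strict local minimum. In the closing strictly decreasing run $k+1>k>\cdots>1$ there are no peaks. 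Adding up gives $1+(k+1)=k+2$ peaks, each of value $k+2$.

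The only substantive step is the first one, establishing the closed form of $r^{(k)}$; once that is in hand, the peak count is a direct inspection. I would also point out that the case $k=0$ recovers the remark made just above: $\CLP((2,2),1)$, with rank sequence $(1,2,1,2,1)$, is the smallest non-unimodal chainlink poset and has exactly two peaks.
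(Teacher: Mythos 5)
Your peak-counting step and the $k=0$ check are fine, but the step you yourself flag as the only substantive one --- establishing the closed form $r^{(k)}$ --- is not established and, as set up, would not go through. (Note the paper offers no proof here either: the Observation is simply read off the displayed list, so the entire content of a proof is exactly the derivation you leave vague.) First, the trace formula \eqref{eq:matrixconnection} is only available when every exponent $d_i=a_i-2l$ is nonnegative; the inserted parts have size $l$, giving $d=-l$, so there is no off-the-shelf ``suitable word in $U$ and $\drm(B_{2\times 1}\!\nearrow)$'' whose trace is the stretched rank polynomial, and you never exhibit one. Second, Lemma~\ref{lem:charpol} lowers the power of a \emph{single} factor inside a trace, whereas passing from the $k$-stretch to the $(k+1)$-stretch inserts a new factor into \emph{each} of the two gaps, so no two-term recurrence between consecutive stretches falls out of that lemma; the proposed induction has no engine.

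Third, and decisively, the base case you defer would fail. Taking the paper's definition literally, the $1$-stretch of $\CLP((2,2),1)$ is the chainlink structure on the composition $(2,1,2,1)$ with $l=1$, i.e.\ the six-element poset $u_1\prec u_2\succ w\succ v_1\prec v_2\succ z\succ u_1$ (equivalently, the lattice points of $\CL((2,1,2,1),1)$ graded by coordinate sum). A direct enumeration gives $14$ ideals with rank sequence $[1,2,3,2,3,2,1]$ --- seven terms and \emph{two} peaks --- not the nine-term $[1,2,3,2,3,2,3,2,1]$ you assert; the same grid-minus-two-cells computation gives two peaks for every $k$. The displayed sequences are instead equal to $\operatorname{tr}\bigl(\drm(B_{2\times(k+1)}\!\nearrow)^2\bigr)$, i.e.\ the rank polynomials of $\CLP((2k+2,2k+2),k+1)$, the $(k+1)$-st dilates of $\CL((2,2),1)$. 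So your identification of the $k$-stretch and the closed form $r^{(k)}$ are mutually incompatible: one of them must be abandoned, the object being stretched must be pinned down, and the sequence must be genuinely derived (e.g.\ by computing the correct matrix word for that object) before the peak count can be read off.
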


\section{Properties of Chainlink polytopes}

In this section, we examine some properties of chainlink polytopes. 
\begin{lemma}
Let $\bar{a} \in \mathbb{N}^s$ be a composition of $n$ and let $l \in \mathbb{R}$. The chainlink polytope $\CL(\bar{a}, l)$ is full dimensional when $l < \operatorname{min} (\bar{a})$.
\end{lemma}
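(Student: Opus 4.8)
The goal is to exhibit a point in the interior of $\CL(\bar a,l)$ with respect to the ambient space $\mathbb{R}^s$; equivalently, to find a point satisfying all $3s$ defining inequalities strictly. This immediately forces $\dim \CL(\bar a,l)=s$, since a polytope containing an interior point of its ambient space has full dimension.

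The natural candidate is a scaled diagonal point $x=c\,(1,1,\ldots,1)$ for a suitable constant $c$. For this point the box constraints read $0<c<a_i$ for every $i$, which holds precisely when $0<c<\min(\bar a)$, and the link constraints read $x_i-x_{i+1}=0\le a_i-l$, which need $l<a_i$ for strictness, i.e.\ $l<\min(\bar a)$. So on the diagonal the link inequalities are satisfied but only with equality-to-zero on the left side while the right side $a_i-l$ is strictly positive — that gives strict satisfaction of the link constraints exactly under the hypothesis $l<\min(\bar a)$. Choosing, say, $c=\tfrac12\min(\bar a)$ (or any $c$ with $0<c<\min(\bar a)$; note $\min(\bar a)\ge 1$ so such $c$ exists) we get a point satisfying $0<x_i<a_i$ and $x_i-x_{i+1}=0<a_i-l$ for all $i\in[s]$.

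Hence every defining inequality of $\CL(\bar a,l)$ is strict at $x$, so $x$ lies in the topological interior of $\mathbb{R}^s$ relative to the polytope's affine hull — but since all inequalities are strict, a full-dimensional open ball around $x$ is contained in $\CL(\bar a,l)$, giving $\dim\CL(\bar a,l)=s$. The only subtlety is ensuring the admissible range for $c$ is nonempty, which is guaranteed because $\bar a$ is a composition of $n$ (so each $a_i\ge 1$, hence $\min(\bar a)\ge 1>0$); there is no real obstacle here, the argument is a one-line verification once the right test point is written down.
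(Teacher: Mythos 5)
Your proof is correct and uses essentially the same strategy as the paper: exhibit an explicit point of $\CL(\bar{a},l)$ at which all $3s$ defining inequalities hold strictly, so that an open ball around it lies in the polytope. The only (cosmetic) difference is the choice of witness --- you take the diagonal point $c(1,\ldots,1)$ with $0<c<\min(\bar{a})$, while the paper perturbs the corner $(a_1,\ldots,a_s)$ by small $\epsilon_i<\min(\bar{a})-l$; both verifications go through under the hypothesis $l<\min(\bar{a})$.
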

\begin{proof}

Let $\{\epsilon_i \}_{i \in [s]}$ be small positive real numbers all less than $\operatorname{min} (\bar{a}) - l$. Consider the point 
\[ x = (a_1 - \epsilon_1, \ldots, a_s - \epsilon_s).\] We have that 
\begin{align*}
    (a_i - \epsilon) - (a_{i (\operatorname{mod}(s)) + 1} - \epsilon) =&\,\,a_i - a_{i (\operatorname{mod}(s)) + 1} + \epsilon_i - \epsilon_{i (\operatorname{mod}(s)) + 1}\\
    <& \,\,a_i - \operatorname{min} \bar{a} + \epsilon_i \\
    <& \,\,a_i - l,
    \end{align*}
by the condition we have imposed on the $\epsilon_i$. Consequently,all points of the form $x$ above are in the polytope and constitute a full dimensional subset. 
\end{proof}
Determining exactly when these polytopes are non-empty is a tricky problem and does not seem to have a nice solution. We note though that a routine application of LP duality shows that the condition $l \leq (a_1 + \ldots + a_s)/s$ is necessary. 
\begin{lemma}
 Let $\bar{a} \in \mathbb{R}_{>0}^s$ and $ l \in \mathbb{R}_{\geq 0}$. Suppose that $ 0<l <\min_{i \in [s]} a_i$. Then the polytope $\CL ( \bar{a},l)$ has exactly $3s$ facets, defined by the equalities $ x_i =0$, $ x_i = a_i$ and $ x_i -x_{i+1} = a_i -l$. 
\end{lemma}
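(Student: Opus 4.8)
The plan is to show that each of the $3s$ listed inequalities is irredundant (defines a genuine facet), since by construction the polytope is cut out by exactly these $3s$ inequalities, so that no other facets can appear. The dimension lemma just proved gives that $\CL(\bar a,l)$ is full-dimensional, namely $\dim \CL(\bar a,l)=s$, so a facet is an $(s-1)$-dimensional face; it therefore suffices to exhibit, for each inequality, a point of $\CL(\bar a,l)$ at which that inequality is tight while all the others are strict, and then perturb within the bounding hyperplane to get $s$ affinely independent such points. Equivalently, and more cleanly, I would show each inequality is irredundant: for each of the $3s$ constraints there is a point satisfying all constraints strictly except the chosen one, which it satisfies with equality; a standard fact then yields that the constraint defines a facet of the full-dimensional polytope.

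First I would fix small positive reals $\epsilon_i < \tfrac12(\min_i a_i - l)$ and use the interior point $x^\circ = (a_1-\epsilon_1,\dots,a_s-\epsilon_s)$ from the preceding proof as a ``base camp'': it satisfies $0 < x^\circ_i < a_i$ for all $i$ and $x^\circ_i - x^\circ_{i+1} < a_i - l$ for all $i$, i.e. it lies in the relative interior. For the facet $x_k = a_k$: start from $x^\circ$ and move the $k$-th coordinate up to $a_k$, i.e. take $x = x^\circ + \epsilon_k e_k$. Then $x_k=a_k$ holds with equality; $0<x_i\le a_i$ still holds for all $i$ with equality only at $i=k$; the constraint $x_{k-1}-x_k\le a_{k-1}-l$ becomes $x^\circ_{k-1}-a_k$, which is even smaller than before, hence strict; and $x_k - x_{k+1} = a_k - x^\circ_{k+1} = a_k - a_{k+1}+\epsilon_{k+1} < a_k - l$ by the choice of $\epsilon_{k+1}$ (using $2\epsilon_{k+1} < \min a - l \le a_{k+1}-l$, wait — more simply $a_k - a_{k+1} + \epsilon_{k+1} < a_k - \min a + \epsilon_{k+1} < a_k - l$ exactly as in the dimension lemma), so that constraint is strict too; all remaining constraints are untouched and strict. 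So $x_k=a_k$ is irredundant. For the facet $x_k=0$: take $x = x^\circ - (a_k-\epsilon_k)e_k$, so $x_k=0$; then $0\le x_i\le a_i$ with equality only at $i=k$, the constraint $x_{k-1}-x_k = x^\circ_{k-1} - 0$ must be checked to be $< a_{k-1}-l$, which holds since $x^\circ_{k-1} = a_{k-1}-\epsilon_{k-1} < a_{k-1} < a_{k-1}+l \le$ — hmm, need $a_{k-1}-\epsilon_{k-1} < a_{k-1}-l$, which is false. So here I would instead also lower $x_{k-1}$ slightly, or better, choose the base point more carefully: take $x$ with $x_i = \delta_i$ for small positive $\delta_i$ with $x_k = 0$, i.e. work near the vertex $0$; then $x_{i}-x_{i+1} = \delta_i - \delta_{i+1}$, which can be made to have absolute value $< a_i - l$, and all the $x_i=a_i$ constraints are strict, and $x_i=0$ is tight only at $i=k$ provided we keep $\delta_i>0$ for $i\ne k$. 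For the facet $x_k - x_{k+1} = a_k - l$: this is where a little more care is needed. I want a point with $x_k - x_{k+1} = a_k-l$ but $0<x_k<a_k$, $0<x_{k+1}<a_{k+1}$, and all other diagonal constraints strict. Take $x_k = a_k - l + c$ and $x_{k+1} = c$ for a suitable constant $c$ with $0<c<l$ (so $0<x_{k+1}<l\le\tfrac12 a_{k+1}$ — note we have $2l\le\min a$ available here, or at least $l<\min a$; actually we only assumed $l<\min a$, so pick $0<c<\min(l,\,)$ and also need $x_k<a_k$, i.e. $c<l$, fine), and set all the other coordinates $x_j$, $j\ne k,k+1$, to whatever small-perturbation values make the remaining constraints $x_{k+1}-x_{k+2}$, $\dots$, $x_{k-1}-x_k$ all strictly less than their bounds — this is possible since those are $s-2$ constraints on the $s-2$ free coordinates plus the freedom that $x_k,x_{k+1}$ are already fixed to values with slack in the box, and one can solve the chain of inequalities greedily going around the cycle. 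This last point is the one requiring attention: one must check the cyclic system of strict inequalities is feasible with $x_k,x_{k+1}$ pinned, which follows because $l<\min a$ gives slack at every step, exactly as in the full-dimensionality argument — indeed $x = x^\circ$ shifted appropriately works: take $x_j = a_j - \eta_j$ for all $j\neq k+1$ with tiny $\eta_j$, and $x_{k+1}$ forced to $x_k-(a_k-l) = a_k-\eta_k-a_k+l = l-\eta_k$, which lies strictly between $0$ and $a_{k+1}$; then $x_{k+1}-x_{k+2} = (l-\eta_k)-(a_{k+2}-\eta_{k+2}) = l - a_{k+2} + \eta_{k+2}-\eta_k < l - a_{k+2}+\eta_{k+2} < 0 < a_{k+1}-l$ is strict, and $x_{k-1}-x_k = (a_{k-1}-\eta_{k-1})-(a_k-\eta_k)$ which is $<a_{k-1}-l$ by the usual estimate, and all intermediate diagonal constraints $x_j-x_{j+1} = a_j-a_{j+1}+\eta_{j+1}-\eta_j<a_j-l$ likewise.

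Having exhibited, for each of the $3s$ inequalities, a point of $\CL(\bar a,l)$ satisfying it with equality and all others strictly, I conclude each inequality is irredundant in the $H$-description of the full-dimensional polytope $\CL(\bar a,l)$, hence each defines a facet; and since the polytope is the intersection of precisely these $3s$ half-spaces, there are no other facets. The main obstacle, as flagged above, is the facet $x_k - x_{k+1} = a_k-l$: one has to pin two coordinates simultaneously and verify that the remaining cyclic chain of strict inequalities can still be satisfied — but the hypothesis $l<\min(\bar a)$ provides uniform slack at every link of the chain, so a perturbation of the base point $x^\circ$ from the previous lemma does the job. (Minor bookkeeping: the index $k+1$ is read mod $s$ throughout; when $s$ is small, e.g. $s=1$ or $s=2$, the argument degenerates and should be checked by hand, but the statement $l<\min a$ and $3s$ facets is easily verified directly there.)
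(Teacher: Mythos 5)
Your proposal is correct and follows essentially the same route as the paper: both arguments exploit the slack $l<\min_i a_i$ to exhibit explicit points on each of the $3s$ bounding hyperplanes witnessing that the corresponding face is $(s-1)$-dimensional (the paper perturbs near the corners $0$ and $\bar a$ and uses the point $(\ldots,\epsilon_{i-1},a_i-l+\delta,\delta,\epsilon_{i+1},\ldots)$ for the diagonal facet, while you use the equivalent ``tight in exactly one constraint'' criterion with very similar points). Your self-corrections land on valid constructions, so apart from the trivial degenerate case $s=1$ (where the diagonal constraint is vacuous, an edge case the paper also ignores) the argument is sound.
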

\begin{proof}
Let $i \in [s]$. Let $ \epsilon= ( \epsilon_1, \dots, \epsilon_s) \in \mathbb{R}^{s}_{\geq 0}$ be such that $ \epsilon_i =0$ and so that $ \epsilon_j \leq \min_{i \in [s]}a_i-l$. Then both $ \epsilon$ and $ (a_1, \dots,a_s)- \epsilon$ are in $ \CL ( \bar{a}, l)$. Thus the faces of $\CL ( \bar{a},l)$ defined by $ x_i =0$ and $ x_i =a_i$ are $s-1$-dimensional.

Take now a small positive $ \delta$ so that $\min \lbrace l, a_{i+1}-l \rbrace >\delta$. Consider the point
\begin{align*}
  p=  (\epsilon_1, \dots, \epsilon_{i-1}, a_i - l + \delta, \delta, \epsilon_{i+1}, \dots, \epsilon_s ).
\end{align*}
Then $ p \in \CL ( \bar{a}, l )$ as well. Thus the face defined by $ x_i -x_{i+1} = a_{i}-l$ is also $s-1$-dimensional. 
\end{proof}
\begin{lemma}
\label{vertexlemma} Let $\bar{a} \in \mathbb{R}_{>0}^s$ and $ l \in \mathbb{R}_{\geq 0}$. Suppose that $ 2l \leq \min_{i \in [s]} a_i$. 
The vertices $v$  of $ \CL ( \bar{a} , l )$ have the form $v_i \in \lbrace 0, l , a_i -l, a_i \rbrace $. Moreover each edge must be parallel either to the standard base vectors $e_i$ or to $e_{i}+e_{i+1} $ for some $i \in [s].$
\end{lemma}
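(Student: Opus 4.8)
The plan is to analyze an arbitrary point $x\in\CL(\bar a,l)$ and show that if $x$ violates the conclusion, then $x$ can be written as a midpoint of two other points in the polytope, hence is not a vertex. Say an index $i\in[s]$ is \emph{loose} if $x_i\notin\{0,l,a_i-l,a_i\}$. First I would organize the loose indices cyclically into maximal runs. A key observation is that within any run, consecutive loose indices must be linked by a tight diagonal constraint $x_i-x_{i+1}=a_i-l$: if $i$ and $i+1$ are both loose but $x_i-x_{i+1}<a_i-l$ strictly, then small perturbations $x_i\mapsto x_i\pm\epsilon$ keep all constraints (the box constraints $0\le x_i\le a_i$ have slack since $x_i$ is loose, hence not at $0$ or $a_i$; the diagonal at $i$ has slack by assumption; the diagonal at $i-1$ reads $x_{i-1}-x_i\le a_{i-1}-l$, which can be preserved by also moving $x_{i-1}$ appropriately, or one simply cuts the run at that point). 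So after refining, I may assume each maximal run of loose indices is \emph{saturated}: every internal diagonal constraint along the run is tight.

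Next comes the length bound on a saturated run. This is the computation already sketched in the commented-out proof: if $i,i+1,i+2$ all lie in one saturated run, then
\[
x_i-x_{i+2}=(x_i-x_{i+1})+(x_{i+1}-x_{i+2})=(a_i-l)+(a_{i+1}-l)=a_i+a_{i+1}-2l\ge a_i,
\]
using $2l\le\min_j a_j\le a_{i+1}$. Hence $x_i\ge x_{i+2}+a_i\ge a_i$, and combined with $x_i\le a_i$ this forces $x_i=a_i$, contradicting that $i$ is loose. So every saturated run has length $1$ or $2$. A length-$1$ run is a single loose coordinate $x_i$ with slack in every constraint touching it, which is immediately perturbable, so $x$ is not a vertex. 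Thus at a vertex every loose run has length exactly $2$: a pair $(i,i+1)$ with $x_i-x_{i+1}=a_i-l$ and both coordinates strictly between $0$ and $a_i$ (resp.\ $a_{i+1}$); moving along $e_i+e_{i+1}$ preserves this diagonal and, for small enough step, the box constraints — but it may hit the diagonal constraints at $i-1$ and at $i+1$ (i.e.\ the one reading $x_{i+1}-x_{i+2}\le a_{i+1}-l$). If \emph{both} of those are strict, $x$ is again perturbable. Hence at a vertex, a loose pair $(i,i+1)$ must have at least one of $x_{i-1}-x_i=a_{i-1}-l$ or $x_{i+1}-x_{i+2}=a_{i+1}-l$ tight; but $i-1$ and $i+2$ are not loose, so this is consistent and does not yet yield a contradiction — instead it pins down the vertex.

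To finish the first claim, I would argue that once \emph{no} index is loose, $v_i\in\{0,l,a_i-l,a_i\}$ is exactly the assertion; the point of the perturbation arguments above is precisely that a genuine vertex has no loose coordinate. The remaining subtlety — and the step I expect to be the main obstacle — is the edge-direction claim. For this I would take an edge $e$ of $\CL(\bar a,l)$, i.e.\ a $1$-dimensional face, cut out by $s-1$ linearly independent tight facet inequalities among the $3s$ available (using the earlier lemma that these are exactly the facets). The edge direction $u$ is the (up to scaling) unique vector orthogonal to those $s-1$ normals. The normals are among $\{\pm e_i\}$ and $\{e_i-e_{i+1}\}$. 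I would show combinatorially that the only primitive directions orthogonal to $s-1$ of these are $\pm e_i$ (when the tight set ``fixes'' all coordinates but one directly) or $\pm(e_i+e_{i+1})$ (when coordinate $i$ and $i+1$ are tied together by $e_i-e_{i+1}$ being tight and everything else is pinned): concretely, the tight normals induce a partition/forest structure on $[s]$ where the $e_i-e_{i+1}$ relations glue indices into blocks and the $\pm e_j$ relations fix blocks to constants; a $1$-dimensional solution space means exactly one block is free, it is glued along a path of consecutive indices, and the free direction is the all-ones vector on that block — but the length bound from the saturated-run analysis (or the same $a_i+a_{i+1}-2l\ge a_i$ computation applied to endpoints of the edge) shows the free block has size at most $2$. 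That caps the direction at $e_i$ or $e_i+e_{i+1}$. The bookkeeping of which subsets of normals are linearly independent and why a free block longer than $2$ is impossible is where the real care is needed; everything else is the perturbation boilerplate above.
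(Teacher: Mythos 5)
Your overall strategy --- perturb any point having a coordinate outside $\{0,l,a_i-l,a_i\}$ into a midpoint of two feasible points, and use the telescoping inequality $(a_i-l)+(a_{i+1}-l)\ge a_i$ to bound the length of any chain of tight diagonal constraints --- is sound and is essentially the paper's route. The paper's version is leaner on both halves: for the vertex claim it works facet by facet (any $p$ on the facet $x_i-x_{i+1}=a_i-l$ with $a_i-l<p_i<a_i$ is already a midpoint of two feasible points along $e_i+e_{i+1}$), and for the edge claim it observes directly that no two cyclically adjacent diagonal constraints can be simultaneously tight at a vertex, so the intersection of the relevant kernels is spanned by vectors of the form $e_j$ and $e_i+e_{i+1}$, from which the direction statement is immediate.

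However, your argument for the first claim has a genuine hole exactly where you stop. You reach the configuration of a loose pair $(i,i+1)$ with $x_i-x_{i+1}=a_i-l$, concede that one of the neighbouring diagonals $x_{i-1}-x_i\le a_{i-1}-l$ or $x_{i+1}-x_{i+2}\le a_{i+1}-l$ may be tight (``does not yet yield a contradiction --- instead it pins down the vertex''), and then in the very next sentence assert that a genuine vertex has no loose coordinate. Those two statements are incompatible, and the case is never closed. It can be closed by the same telescoping computation you already use: if $x_{i-1}-x_i=a_{i-1}-l$ were tight, then $x_{i-1}=x_i+(a_{i-1}-l)>(a_i-l)+(a_{i-1}-l)\ge a_{i-1}$ (using $x_i>a_i-l$ and $2l\le a_i$), violating $x_{i-1}\le a_{i-1}$; symmetrically, tightness of $x_{i+1}-x_{i+2}=a_{i+1}-l$ would force $x_{i+2}=x_{i+1}-(a_{i+1}-l)<2l-a_{i+1}\le 0$. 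Hence both neighbouring diagonals are strict, the pair is perturbable along $\pm(e_i+e_{i+1})$, and no vertex has a loose coordinate. You must add this step for the proof to go through; the remainder, including the edge-direction bookkeeping where the same inequality shows a free block of size at least $3$ is pinned by a tight box constraint, is a faithful if more laborious version of what the paper does.
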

\begin{proof}
Take $ i \in [s]$. Consider the facet $ F_i$ defined by $ x_i - x_{i+1} = a_i -l$.  Let $ p \in F_i$. If $a_i  >p_i > a_i -l$, take $p^+ = p + (a_i-p_i) (e_i - e_{i+1})$ and $ p^- = p +(a_i -l - p_i) ( e_{i}-e_{i+1})$. One sees easily that both $p^+ $ and $p^- $ are in $ \CL ( \bar{a}, l )$ and since $p$ is a convex sum of $ p^+$ and $p^-$, $p$ can not be a vertex. Thus, a vertex $ v \in F_i$ must satisfy $ v_i \in \lbrace a_i-l, a_i \rbrace$ and so $ v_{i+1} \in \lbrace 0, l \rbrace$. Since all the other facets are given by $x_i =0$ or $x_i = a_i$, we get that the coordinates of a vertex $v$ must be of the form  $v_i \in \lbrace 0, l , a_i -l, a_i \rbrace $. 

We'll show that the edges must be parallel to the $e_i$ or $e_i + e_{i+1} $ assuming that $ 2l < \min_{i \in [s]} a_i$. But since any $\bar{a}$ with $ 2l \leq \min_{i \in [s]} a_i$ can be approximated by $\bar{a}'$ satisfying the strict inequality, the statement holds in this case as well.

The kernels of the functionals that define the chainlink polytope have the following form
\begin{align*}
   \mathcal{A}_i&= \ker x_i =  span \lbrace e_j: j \in [s] -\lbrace i \rbrace \rbrace .\\
  \mathcal{B}_i &=  \ker x_i - x_{i+1} = span\lbrace e_i + e_{i+1}, e_j: j \in [s] - \lbrace i , i+1 \rbrace \rbrace.
\end{align*}
Call $A_i= \lbrace e_j: j \in [s] -\lbrace i \rbrace \rbrace$ and $ B_i =  \lbrace e_i + e_{i+1}, e_j: j \in [s] - \lbrace i , i+1 \rbrace \rbrace$.   Let $I\subset [s]$, we have
\begin{align*}
    \bigcap_{i \in I} \mathcal{A}_i = span ( \bigcap_{i \in I} A_i )
\end{align*}
 Let $J \subset [s]$ be a subset which contains no (cyclically) adjacent elements, then
 \begin{align*}
      \bigcap_{j \in J} \mathcal{B}_j = span ( \bigcap_{j \in J} B_j ).
 \end{align*}

Let $v$ be a vertex and $e$ an incident edge. Since $ 2l < \min_{i \in [s]} a_i$, the functionals $ x_{i-1} -x_i$ and $x_{i} -x_{i+1}$ can't both be maximized at $v$. Hence the set $ K=\lbrace j \in [s]: v_{j}- v_{j+1} = a_j -l \rbrace$ doesn't contain any (cyclically) adjacent elements. The edge $e$ must be parallel to a 1-dimensional subspace that is the intersection of the kernels of some of the functionals that are maximized at $v$. Thus $e$ is parallel to a 1-dimensional space of the form
\begin{align*}
    L =span(\bigcap_{i \in I} A_i) \cap span ( \bigcap_{j \in J} B_j).
\end{align*}
From this it's easy to see that $L = \mathbb{R} e_i$ or $ \mathbb{R} ( e_i + e_{i+1})$ for some $i \in [s]$.
\end{proof}

\begin{proof}[Proof of Proposition \ref{prop:vertices}]
By Lemma \ref{vertexlemma} the vertices are integral as any vertex $v$ must be of the form $ v_i \in \lbrace 0,l,a_i-l,a_i \rbrace$. 

For the second part, notice that the edges are transverse to the hyper-planes $ H^t=\lbrace x_1+ \cdots + x_s = t \rbrace$. Hence the vertices of $ \CL^t ( \bar{a},l \rbrace$ are the intersection of the edges with $H^t$. These intersections must be of the form $ v + a e_i$ or $v + b ( e_{i} + e_{i+1})$ for some vertex $v$ and some $a ,b\in \mathbb{R}.$ Since the vertices are integral, we must have $a \in \mathbb{Z}$ or $ b \in \frac{1}{2} \mathbb{Z} .$
\end{proof}
\begin{prop}
If we have the strict inequality $ 2l < \min_{i \in [s]} a_i$, then the polytope
$ \CL ( \bar{a}, l )$ is simple and the combinatorial structure doesn't depend on $\bar{a}$ or $l$.
\end{prop}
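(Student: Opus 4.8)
The plan is to show that under the strict inequality $2l < \min_i a_i$ the polytope is simple — i.e.\ every vertex lies on exactly $s$ facets — and then to exhibit an explicit vertex-facet incidence description that manifestly does not involve the numerical values $a_i$ or $l$, only the combinatorics of the cyclic index set $[s]$. By Lemma~\ref{vertexlemma}, every vertex $v$ satisfies $v_i \in \{0, l, a_i - l, a_i\}$ and each incident edge is parallel to some $e_i$ or $e_i + e_{i+1}$. The first step is to make the bijection from that lemma precise: a vertex is determined by choosing, for each $i \in [s]$, which of the four "active" facets among $x_i = 0$, $x_i = a_i$, $x_i - x_{i+1} = a_i - l$, $x_{i-1} - x_i = a_{i-1} - l$ pin down the coordinate $v_i$, subject to the compatibility constraints forced by the strict inequality (namely that the set $K = \{ j : v_j - v_{j+1} = a_j - l\}$ contains no two cyclically adjacent indices, as already established in the proof of Lemma~\ref{vertexlemma}).

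Next I would count, at a given vertex $v$, the facets through it and the edges out of it. Since $\CL(\bar{a},l) \subset \RR^s$ is $s$-dimensional (by the full-dimensionality lemma, using $l < \min \bar{a}$), simplicity is equivalent to: exactly $s$ facets pass through each vertex, equivalently exactly $s$ edges emanate from each vertex. The edges through $v$ are precisely the one-dimensional subspaces $L = \operatorname{span}(\bigcap_{i \in I} A_i) \cap \operatorname{span}(\bigcap_{j \in J} B_j)$ that are feasible directions; the computation in Lemma~\ref{vertexlemma} shows each such $L$ is $\RR e_i$ or $\RR(e_i + e_{i+1})$. The key bookkeeping step is to check that for each vertex and each coordinate block determined by $K$, exactly one edge direction is available "to the left" and one "to the right" in the appropriate sense, so that the total comes out to exactly $s$; the strict inequality is what rules out the degenerate coincidences (e.g.\ $l = a_i - l$, or two adjacent constraints both tight) that would otherwise merge facets and destroy simplicity. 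This is essentially a finite case analysis on the local pattern of $v$ near each index $i$, organized by the maximal runs of indices in $K$ and their complements.

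Finally, for the "combinatorial structure doesn't depend on $\bar{a}$ or $l$" claim, I would set up an explicit continuous deformation: given two admissible parameters $(\bar{a},l)$ and $(\bar{a}',l')$ both satisfying the strict inequality, connect them by the straight-line path $t \mapsto ((1-t)\bar{a} + t\bar{a}',\, (1-t)l + t l')$, which stays in the open region $\{2l < \min_i a_i,\ l \ge 0\}$ since that region is convex. Along this path no facet ever degenerates (each of the $3s$ defining inequalities remains facet-defining by the earlier lemma, and the strict inequality persists), so the face lattice is constant — one can make this rigorous either by a normal-fan/secondary-fan argument (the parameter region lies in the interior of a single chamber) or, more hands-on, by observing that the combinatorial type of a simple polytope is locally constant under perturbations that keep all facets facet-defining and introduce no new incidences, which is exactly what the strict inequality guarantees. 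The main obstacle I anticipate is the middle step: correctly enumerating the edges at a vertex and verifying the count is exactly $s$ in every local configuration, since the cyclic adjacency condition on $K$ interacts with the choices $v_i \in \{0, l, a_i-l, a_i\}$ in a way that requires careful, though elementary, case analysis.
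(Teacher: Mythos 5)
Your overall strategy for simplicity is essentially the paper's: assign to each index $i$ the unique defining functional that pins down the coordinate $v_i$, and show this gives a bijection $[s] \to F(v)$ between indices and facets through $v$. But you stop short of the one step that actually carries the proof --- verifying that this assignment is well defined and hits every maximized functional --- and you flag it yourself as the anticipated obstacle. The paper closes this gap explicitly: the map sends $i$ to $-x_i$, $x_i$, $x_i - x_{i+1}$, or $x_{i-1}-x_i$ according as $v_i = 0$, $a_i$, $a_i - l$, or $l$ (these cases are disjoint precisely because $2l < \min_i a_i$ makes $0, l, a_i-l, a_i$ pairwise distinct), and then checks surjectivity: if some $x_i - x_{i+1}$ is maximized at $v$ but allegedly not in the image, then either $v_i = a_i$ and $v_{i+1}=l$, or $v_i = a_i - l$ and $v_{i+1}=0$, and in either case that functional is already $f(i+1)$ or $f(i)$ respectively. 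Without this verification, ``the total comes out to exactly $s$'' is an assertion rather than a proof, so you should supply it; the case analysis is short once organized as above and does not really require tracking the runs of $K$.

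For the invariance of the combinatorial type you take a genuinely different route: a deformation along a straight line in the convex parameter region $\{2l < \min_i a_i\}$, using that the combinatorial type of a simple polytope with all inequalities facet-defining is locally constant under perturbation. This is valid (granted simplicity at every point of the path, which your first part would provide) and is arguably slicker. The paper instead writes down, purely in terms of the cyclic order on $[s]$, the list of admissible sets $F(v)$ of maximized functionals --- at most one of $x_i$, $-x_i$; at most one of $x_i - x_{i+1}$, $x_{i-1}-x_i$; and two implication rules --- and observes that this list does not mention $\bar{a}$ or $l$. The paper's version is self-contained and exhibits the vertex-facet incidences explicitly; yours outsources the work to a standard stability fact that you would need to either cite precisely or prove.
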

\begin{proof}
Let $v$ be a vertex. Denote by $ F( v)$ the set of defining functionals which are maximized in $ \CL (\bar{a},l) \text{ on } v $. We'll show that $v$ is simple by constructing a bijection
\begin{align*}
    f:[s] \xrightarrow[ ]{} F(v).
\end{align*}
If $v_i =0 $, map $f: i \mapsto -x_i$ and if $ v_i = a_i$  map $ f:i \mapsto x_i$. If we have $ v_{i} \not \in \lbrace 0, a_i \rbrace$, then either $v_i = a_i -l$ or $ v_i =l$. In the former case we must have $ v_{i+1}=0$ and in the latter $v_{i-1}= a_{i-1} $. In the former case map $ f:i \mapsto x_i - x_{i+1}$ and in the latter $ f:i \mapsto x_{i-1} - x_i$. Suppose that a linear functional $ \phi \in F(v) $ is not in the image of the function $f$. Clearly it can not be either of the functionals $x_i$ or $-x_i$ for any $i$. So
 $\phi= x_i-x_{i+1}$ for some $i$. Hence either $v_i =a_i$ and $v_{i+1}=l$ or $ v_{i} = a_i -l$ and $ v_{i+1}=0$. But  then in the former case $f(i+1) = x_i-x_{i+1}$ and in the latter case  $f( i)= x_{i-1} - x_i$. So $v$ is simple. 

Let $F  $ be a subset of the defining linear functionals of size $s$. Suppose that $F$ satisfies the following for each $i \in [s]$
\begin{itemize}
    \item At most one of $ x_i$ and $ -x_{i}$ is in $ F$.
    \item At most one of $x_i-x_{i+1} $ and $ x_{i-1}- x_{i}$ is in $F$.
    \item If $x_i - x_{i+1} \in F$ then either $ x_{i} \in F$ or $ -x_{i+1} \in F$.
    \item If $x_i \in F$ then $ -x_{i+1} \not \in F$. 
\end{itemize}
Then there is a unique vertex $v$ whose set of maximised functionals is $F$, that is, $ F = F(v)$.  Since the possible sets don't depend on $ \bar{a}$ or $l$, we get a combinatorial equivalence between any two $s$-dimensional chainlink polytopes satisfying $ 2l < \min_{i \in [s]} a_i$. 
\end{proof}

\begin{prop}
Let $\bar{a} \in \mathbb{R}_{>0}^s$ and $ l \in \mathbb{R}_{\geq 0}$. Suppose that $ 2l \leq \min_{i \in [s]} a_i$. The number of vertices of the chainlink polytope $ \CL ( \bar{a}, l) $ is given by 
\begin{align*}
   \textbf{Vert} ( \CL ( \bar{a}, l))= \operatorname{tr} ( A_1 \cdots A_s)
\end{align*}
where each $ A_i= A$ if $ a_i > 2l$ or $A_i =B$ if $ a_i = 2l$, where 
\begin{align*}
    A= \begin{bmatrix} 1 & 1 &1 \\
    1 & 0 & 0 \\
    1 & 1 & 1
    \end{bmatrix}, \quad  B = \begin{bmatrix} 1 & 1 &1 \\
    1 & 0 & 0 \\
    1 & 0 & 1
    \end{bmatrix} .
\end{align*}
\end{prop}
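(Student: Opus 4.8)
The plan is to determine the vertex set of $\CL(\bar a, l)$ combinatorially and then count it by a transfer matrix running around the cyclic index set $[s]$. We may assume $l>0$, so that (using $2l\le\min_i a_i$) we are in the regime $0<l<\min_i a_i$, where the facets of $\CL(\bar a,l)$ are exactly $x_i=0$, $x_i=a_i$ and $x_i-x_{i+1}=a_i-l$. By Lemma~\ref{vertexlemma} every vertex $v$ has $v_i\in\{0,l,a_i-l,a_i\}$, so it remains to decide which such tuples are vertices. I claim that a tuple $v$ with these coordinates and lying in $\CL(\bar a,l)$ is a vertex if and only if it has no \emph{free middle coordinate}, i.e.\ for every $i$ with $v_i\notin\{0,a_i\}$ at least one of the two diagonal inequalities $x_{i-1}-x_i\le a_{i-1}-l$ and $x_i-x_{i+1}\le a_i-l$ is an equality at $v$. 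This follows from the standard criterion that a point of a full-dimensional polytope is a vertex iff the normals of the facets through it span $\mathbb{R}^s$: a vector $w$ in the common kernel of all tight functionals must vanish on every coordinate $i$ with $v_i\in\{0,a_i\}$ and must satisfy $w_i=w_{i+1}$ whenever the $i$-th diagonal is tight, and a short case check shows these relations force $w=0$ precisely when no middle coordinate is free.

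Since this criterion only couples consecutive coordinates, the number of vertices equals $\operatorname{tr}(M_1\cdots M_s)$ for suitable transfer matrices. I would use four states per position: $0$ (meaning $v_i=0$), $a$ (meaning $v_i=a_i$), and two middle states $\mathrm U,\mathrm D$, where $\mathrm U$ records that $v_i$ is middle with its incoming diagonal tight, and $\mathrm D$ that $v_i$ is middle with its incoming diagonal slack but its outgoing one tight. When $a_i>2l$ the states $\mathrm U,\mathrm D$ correspond to the two distinct middle values $l$ and $a_i-l$; when $a_i=2l$ these two values coincide but the states are kept distinct as bookkeeping, and one checks that assigning to each vertex the sequence of its states is well defined and gives a bijection onto the admissible cyclic state sequences. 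Tabulating which neighbouring pairs are compatible with the polytope inequality $v_i-v_{i+1}\le a_i-l$ and with the no-free-middle condition, one obtains $\textbf{Vert}(\CL(\bar a,l))=\operatorname{tr}(M_1\cdots M_s)$, where (rows and columns ordered $0,\mathrm U,\mathrm D,a$)
\[
M_i \;=\; \begin{pmatrix} 1 & 0 & 1 & 1 \\ 1 & 0 & 1 & 1 \\ 1 & 0 & 0 & 0 \\ 0 & 1 & \delta_i & 1 \end{pmatrix},
\qquad
\delta_i = \begin{cases} 1 & \text{if } a_i > 2l, \\ 0 & \text{if } a_i = 2l, \end{cases}
\]
so that $M_i$ depends only on $a_i$.

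To recover the stated $3\times 3$ formula I would pass to the common invariant subspace $W=\{w\in\mathbb{R}^4: w_0=w_{\mathrm U}\}$. Both possible values of $M_i$ send $\mathbb{R}^4$ into $W$ (their columns all have equal first two entries), hence preserve $W$; so the product maps into $W$ and $\operatorname{tr}(M_1\cdots M_s)$ equals the trace of its restriction to $W$. Choosing the basis $e_0+e_{\mathrm U},\,e_{\mathrm D},\,e_a$ of $W$, a direct computation gives $M_i|_W=A$ when $a_i>2l$ and $M_i|_W=B$ when $a_i=2l$, which yields $\textbf{Vert}(\CL(\bar a,l))=\operatorname{tr}(A_1\cdots A_s)$ with $A_i$ as in the statement. (Alternatively, without exhibiting $W$: each $4\times4$ matrix has a simple zero eigenvalue and the remaining $3\times3$ part has characteristic polynomial $\lambda(\lambda^2-2\lambda-1)$, which is that of $A$, respectively $B$.)

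The step I expect to be most delicate is the bookkeeping in the degenerate case $a_i=2l$: there $\CL(\bar a,l)$ is not simple, a middle vertex coordinate can sit on two tight diagonals at once, and the single physical middle value must be split into the two logical states $\mathrm U$ and $\mathrm D$ in exactly the right way for the vertex-to-sequence map to remain a bijection — this is precisely what produces the single altered entry $\delta_i$ and the matrix $B$. The vertex criterion, the compatibility table, and the linear-algebra reduction to $3\times3$ are then routine.
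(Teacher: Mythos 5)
Your argument is correct and is, at its core, the same proof as the paper's: both rest on Lemma~\ref{vertexlemma}, encode each vertex as a cyclic word over a small alphabet whose admissibility is a nearest-neighbour condition, and count the words by a trace of position-dependent transfer matrices, with the single altered transition occurring exactly when $a_i=2l$. The only real difference is presentational: the paper works directly with three letters (small, medium, large), where ``small'' means $v_i=0$ \emph{or} ($v_{i-1}=a_{i-1}$ and $v_i=l$), while you first use four states $0,\mathrm{U},\mathrm{D},a$ and then lump $0$ and $\mathrm{U}$ together via the invariant subspace $W$; your basis vector $e_0+e_{\mathrm U}$ is precisely the paper's ``small'' letter, $\mathrm{D}$ its ``medium'' and $a$ its ``large'', and your $4\times4$ matrices and the reduction to $A$ and $B$ check out (your explicit vertex criterion via spanning of the tight normals is, if anything, more carefully justified than the paper's). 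Two small caveats: your parenthetical alternative to the invariant-subspace step is not valid as stated --- the characteristic polynomial of $B$ is $\lambda^3-2\lambda^2-\lambda+1$, not $\lambda(\lambda^2-2\lambda-1)$, and in any case knowing the spectra of the individual factors would not determine the trace of a mixed product, so the explicit subspace argument is the one to keep; and your reduction ``we may assume $l>0$'' quietly discards the case $l=0$, where the polytope is a box with $2^s$ vertices and the trace formula actually fails --- but this degenerate case is equally unaddressed in the paper's statement and proof, so it is a shared caveat rather than a defect of your approach.
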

\begin{proof}
Let $v$ be a vertex of $ \CL ( \bar{a}, l)$. From \ref{vertexlemma}, we know that $ v_i \in \lbrace 0,l,a_i-l, a_i \rbrace$. Moreover, if $ v_i \not \in \lbrace 0, a_i \rbrace$, then $v$ is either contained in the facet defined by $ x_i - x_{i+1} = a_i -l$  and thus $ v_i = a_i -l$ and $ v_{i+1} = 0$ or $v$ is contained in the facet defined by $ x_{i-1}- x_i = a_{i-1} -l$, in which case $ v_{i-1} = a_{i-1}$ and $ v_i = l$. 

In light of this, we encode the vertices as follows: 
\begin{itemize}
    \item If $ v_i =0$ or if $ v_{i_1} =a_{i-1}$ and $ v_i =l$, the $i$th index is called \textbf{small}.
    \item If $v_{i}= a_{i}- l$ and if $ v_{i-1} - v_i \neq a_{i-1} -l$, then the $i$th index is called $\textbf{medium}$.  Notice that a medium index must be followed by a small index. 
    \item If $ v_i =a_i$, then the $i$th index is \textbf{large}.
\end{itemize}

From a vertex $v$, we construct a word $ w_v : [s]  \rightarrow \lbrace s, m,L \rbrace$ where 
\begin{align*}
    w_v (  i ) = \begin{cases} s & \text{ if  the index } i \text{ is small.} \\
    m & \text{ if  the index } i \text{ is medium.} \\
    L & \text{ if  the index } i \text{ is large.} 
    \end{cases}.
\end{align*}
The correspondence $ v \mapsto w_v$ is 1-1, given $\bar{a}$ and $w_v$, we can reconstruct $v$. The words $w_v$ obey two simple rules:
\begin{itemize}
\item An $m$ is followed by an $s$.
\item If $ a_i = 2l$ and if $ w_v (i-1) = L $, then the $i$th index can't be medium, that is $ w_v( i) \neq m$. 
\end{itemize}

Consider words of length $s+1$ that satisfy the above two rules. Construct matrices  $M(\bar{a}) = ( M_{ij}(\bar{a})_{i,j \in \lbrace m,s L \rbrace}$ so that $M_{ij}(\bar{a})$ are the number of length $ s+1$-words that begin with $i$ and end  with $j$.

Let $ \bar{a}'$ denote the first $s-1$ terms of $\bar{a}$. If the last entry $ a_{s}=2l$, we have
\begin{align*}
    M (\bar{a}) = M(\bar{a}') \begin{bmatrix} 1 & 1 &1 \\
    1 & 0 & 0 \\
    1 & 0 & 1
    \end{bmatrix}.
\end{align*}
And if $a_i > 2l$, we have
\begin{align*}
    M(\bar{a}) = M(\bar{a}') \begin{bmatrix} 1 & 1 &1 \\
    1 & 0 & 0 \\
    1 & 1 & 1
    \end{bmatrix}.
\end{align*}
Since the vertices of $ \CL ( \bar{a}, l ) $ correspond to the words that begin and end with the same letter, we get the trace formula in the proposition.
\end{proof}
\begin{corollary}
In particular if $ 2l < \min_{ i \in [s] } a_i$, then 
\begin{align*}
    \textbf{Vert} ( \CL ( \bar{a}, l)) = \operatorname{tr} (A^s)
\end{align*}
which satisfies the linear recurrence
\begin{align*}
    \operatorname{tr} (A^s)= 2 \operatorname{tr} (A^{s-1}) + \operatorname{tr} (A^{s-2})
\end{align*}
for $ s \geq 3$. It can be seen easily that $ \operatorname{tr} (A) =2$ and $ \operatorname{tr} (A^2) =6$. These are dubbed the "Companion Pell Numbers" in A002203.
\end{corollary}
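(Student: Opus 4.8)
The statement to prove is the Corollary: when $2l < \min_i a_i$, all matrices $A_i$ equal $A$, so $\textbf{Vert}(\CL(\bar a,l)) = \operatorname{tr}(A^s)$, and this quantity satisfies $\operatorname{tr}(A^s) = 2\operatorname{tr}(A^{s-1}) + \operatorname{tr}(A^{s-2})$ for $s \geq 3$, with $\operatorname{tr}(A) = 2$ and $\operatorname{tr}(A^2) = 6$, identifying the sequence as the companion Pell numbers. The first assertion is immediate from the preceding Proposition: the hypothesis $2l < \min_i a_i$ forces $a_i > 2l$ for every $i$, so every $A_i$ is the matrix $A$ and the trace formula specializes directly. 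So the content is the linear recurrence and the initial values.

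The plan is to obtain the recurrence from the characteristic polynomial of $A$ via the Cayley--Hamilton theorem, exactly in the spirit of Lemma~\ref{lem:charpol} earlier in the paper. First I would compute $\det A$ and the coefficients of $\chi_A(x) = x^3 - c_2 x^2 + c_1 x - c_0$, where $c_2 = \operatorname{tr}(A)$, $c_0 = \det A$, and $c_1$ is the sum of the $2\times 2$ principal minors. A direct computation with
\[
A = \begin{bmatrix} 1 & 1 & 1 \\ 1 & 0 & 0 \\ 1 & 1 & 1\end{bmatrix}
\]
gives $\operatorname{tr}(A) = 2$, $\det A = 0$ (rows $1$ and $3$ are equal), and $c_1 = -1$ (the three principal $2\times 2$ minors are $-1$, $0$, $0$). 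Hence $\chi_A(x) = x^3 - 2x^2 - x$, i.e. $A^3 = 2A^2 + A$. Multiplying by $A^{s-3}$ and taking traces yields $\operatorname{tr}(A^s) = 2\operatorname{tr}(A^{s-1}) + \operatorname{tr}(A^{s-2})$ for all $s \geq 3$. The initial values are then a one-line check: $\operatorname{tr}(A) = 1 + 0 + 1 = 2$, and for $\operatorname{tr}(A^2)$ one computes the diagonal entries of $A^2$ (or uses $\operatorname{tr}(A^2) = (\operatorname{tr} A)^2 - 2c_1 = 4 + 2 = 6$). Finally, the recurrence $x_s = 2x_{s-1} + x_{s-2}$ with $x_1 = 2$, $x_2 = 6$ matches OEIS A002203 (companion Pell / Pell--Lucas numbers, shifted), which is the claimed identification.

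There is essentially no obstacle here; the only thing to be a little careful about is the indexing and whether the recurrence is claimed to start at $s = 3$ (it does: Cayley--Hamilton gives $A^3$ in terms of lower powers, so $\operatorname{tr}(A^3) = 2\operatorname{tr}(A^2) + \operatorname{tr}(A^1)$ is the first instance). One should also note that since $\det A = 0$ the recurrence is genuinely only two-term despite $A$ being $3\times 3$ — the zero eigenvalue drops out — which is why the companion Pell recurrence (rather than a three-term one) appears. I would present the computation of $\chi_A$ compactly, cite Cayley--Hamilton, and state the OEIS match, which completes the proof.
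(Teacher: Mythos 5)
Your proposal is correct and matches what the paper intends: the specialization to $A_i = A$ is immediate from the preceding proposition, and the recurrence follows from the Cayley--Hamilton relation $A^3 = 2A^2 + A$ (using $\det A = 0$ and the principal-minor sum $-1$), exactly in the spirit of the paper's Lemma~\ref{lem:charpol}. The computations $\operatorname{tr}(A)=2$, $\operatorname{tr}(A^2)=6$ and the match with A002203 all check out.
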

\begin{corollary}
If $ a_i =2l$ for each $ i \in [s]$, then 
\begin{align*}
    \textbf{Vert} ( \CL ( \bar{a}, l)) = \operatorname{tr} (B^s)
\end{align*}
which satisfies the linear recurrence
\begin{align*}
    \operatorname{tr} (B^s)=2  \operatorname{tr} (B^{s-1}) +\operatorname{tr} (B^{s-2}) - \operatorname{tr} (B^{s-3})  
\end{align*}
for $ s \geq 3$. It's easy to see that $ \operatorname{tr} (B^{0}) =3 $, $ \operatorname{tr} (B)=2 $ and $ \operatorname{tr} (B^{2})=6 $. This is the sequence
A033304. Note that the matrix $B$ shows up but it is conjugated by a symmetric matrix. 
\end{corollary}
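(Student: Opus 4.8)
The plan is to get the first identity for free from the preceding Proposition: when $a_i = 2l$ for every $i\in[s]$, each factor $A_i$ in the trace formula $\textbf{Vert}(\CL(\bar a,l))=\operatorname{tr}(A_1\cdots A_s)$ is forced to equal $B$, so $\textbf{Vert}(\CL(\bar a,l))=\operatorname{tr}(B^s)$ with nothing further to prove. Everything else is the standard translation between the characteristic polynomial of a $3\times 3$ matrix and a linear recurrence for the traces of its powers.

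For the recurrence I would compute $\chi_B(x)=\det(xI-B)$. Reading off the coefficients from $\operatorname{tr}(B)$, the sum of $2\times 2$ principal minors, and $\det B$ gives $\chi_B(x)=x^3-2x^2-x+1$, so by Cayley--Hamilton $B^3=2B^2+B-I$. Multiplying on the left by $B^{s-3}$ (legitimate once $s\geq 3$) and taking traces yields exactly
$$\operatorname{tr}(B^s)=2\operatorname{tr}(B^{s-1})+\operatorname{tr}(B^{s-2})-\operatorname{tr}(B^{s-3}).$$
The three seed values are immediate: $\operatorname{tr}(B^0)=\operatorname{tr}(I)=3$; $\operatorname{tr}(B)=1+0+1=2$; and squaring $B$ produces diagonal entries $3,1,2$, so $\operatorname{tr}(B^2)=6$. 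Unwinding the recurrence gives $3,2,6,11,26,57,\dots$, which one checks term-by-term against A033304.

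For the closing remark I would simply note that $B=B^{T}$, i.e. $B$ is literally a symmetric matrix (in contrast with the matrix $A$ of the previous corollary, which is not); if one prefers an eigenvalue statement, a quick sign analysis of $\chi_B$ at $-1,0,1,3$ shows $\chi_B$ has three distinct real roots, so $B$ is diagonalizable over $\mathbb{R}$ and hence similar to a diagonal (in particular symmetric) matrix. This is what the parenthetical comment about the OEIS presentation is pointing at.

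I do not expect a genuine obstacle: the corollary is a special case of the previous Proposition together with the routine characteristic-polynomial-to-recurrence argument. The only points requiring care are getting the signs in $\chi_B$ right and checking that the recurrence's range ($s\geq 3$, with $\operatorname{tr}(B^0)=3$ supplying the third initial condition) lines up with the offset of A033304; both are bookkeeping rather than mathematics.
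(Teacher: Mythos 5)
Your proposal is correct and follows the same route the paper intends: the identity $\textbf{Vert}(\CL(\bar{a},l))=\operatorname{tr}(B^s)$ is the immediate specialization of the preceding proposition (every $A_i=B$ when $a_i=2l$), and the recurrence is exactly the Cayley--Hamilton consequence of $\chi_B(x)=x^3-2x^2-x+1$, with the initial traces $3,2,6$ as you computed. The only quibble is interpretive and immaterial: the paper's closing aside about conjugation by a symmetric matrix refers to how the sequence is presented in OEIS A033304, not to a fact needing proof, so your observation that $B=B^{T}$ is a harmless alternative gloss.
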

The calculation of the volume of a chainlink polytope has quite a straightforward formula in the case $ 2l \leq \min_{i \in [s]} a_i$. 
\begin{prop}
Let $\bar{a} \in \mathbb{R}_{>0}^s$ and $ l \in \mathbb{R}_{\geq 0}$. Suppose that $ 2l \leq \min_{i \in [s]} a_i$.  The volume of the chainlink polytope $ \HL( \bar{a},l )$ is given by the following trace formula.
 \begin{align*}
    \Vol ( \HL( \bar{a}, l ))= \operatorname{tr} \left(\begin{bmatrix}
    a_1 & \frac{-l^2}{2} \\
    1 & 0   
    \end{bmatrix}\cdot \begin{bmatrix}
    a_2 & \frac{-l^2}{2} \\
    1 & 0   
    \end{bmatrix}\cdot \begin{bmatrix}
    a_3 & \frac{-l^2}{2} \\
    1 & 0   
    \end{bmatrix}\cdots\begin{bmatrix}
    a_s & \frac{-l^2}{2} \\
    1 & 0   
    \end{bmatrix}\right).
\end{align*}

\end{prop}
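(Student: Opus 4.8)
The plan is to compute $\Vol(\CL(\bar a,l))$ by inclusion--exclusion starting from the cuboid $\CL(\bar a,0)$ and then to recognise the resulting alternating sum as the stated trace. Write $Q=[0,a_1]\times\cdots\times[0,a_s]=\CL(\bar a,0)$, and for $i\in[s]$ let $B_i=Q\cap\{x_i-x_{i+1}>a_i-l\}$ be the part of the cuboid where the $i$-th chainlink inequality fails, indices read cyclically. Then $\CL(\bar a,l)=Q\setminus\bigcup_{i=1}^s B_i$, so
\[\Vol(\CL(\bar a,l))=\sum_{S\subseteq[s]}(-1)^{|S|}\,\Vol\Bigl(\,\bigcap_{i\in S}B_i\Bigr).\]
The first step is to observe that this sum collapses: if $x\in B_i$ then $x_{i+1}<x_i-(a_i-l)\le l$, whereas if $x\in B_{i+1}$ then $x_{i+1}>a_{i+1}-l\ge l$ because $2l\le\min_i a_i$; hence $B_i\cap B_{i+1}=\varnothing$, and more generally $\bigcap_{i\in S}B_i=\varnothing$ whenever $S$ contains two cyclically adjacent indices. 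So only the subsets $S$ that are independent sets of the cycle on $[s]$ survive.

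For such an $S$ the pairs $\{i,i+1\}$ with $i\in S$ are pairwise disjoint, so the conditions defining $\bigcap_{i\in S}B_i$ decouple over these pairs and over the remaining coordinates. For a single $i\in S$ the inequality $x_{i+1}\le a_{i+1}$ is automatic on $B_i$ (since $x_i-(a_i-l)\le l\le a_{i+1}$), and $\int_{a_i-l}^{a_i}\bigl(x_i-(a_i-l)\bigr)\,dx_i=l^2/2$; meanwhile each $x_j$ with $j\notin S\cup(S+1)$ ranges freely over $[0,a_j]$. Hence $\Vol\bigl(\bigcap_{i\in S}B_i\bigr)=(l^2/2)^{|S|}\prod_{j\notin S\cup(S+1)}a_j$, and therefore
\[\Vol(\CL(\bar a,l))=\sum_{\substack{S\subseteq\mathbb{Z}/s\mathbb{Z}\\ S\ \text{independent}}}\Bigl(\tfrac{-l^2}{2}\Bigr)^{|S|}\prod_{j\notin S\cup(S+1)}a_j.\]

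It remains to match this with the trace. Put $c=-l^2/2$ and $M_i=\begin{bmatrix}a_i&c\\ 1&0\end{bmatrix}$, and expand $\operatorname{tr}(M_1\cdots M_s)$ as a sum over closed sequences $p_0,p_1,\ldots,p_s=p_0$ with $p_k\in\{1,2\}$, weighted by $\prod_{i}(M_i)_{p_{i-1},p_i}$, where $(M_i)_{1,1}=a_i$, $(M_i)_{1,2}=c$, $(M_i)_{2,1}=1$, $(M_i)_{2,2}=0$. A nonzero term forces every index $k$ with $p_k=2$ to have $p_{k-1}=p_{k+1}=1$ (cyclically); thus the nonzero terms are indexed exactly by the independent sets $S=\{k:p_k=2\}$ of the cycle, and one checks the weight attached to $S$ is $c^{|S|}\prod_{j\notin S\cup(S+1)}a_j$: each element of $S$ produces one factor $c$ and one factor $1$, and the factor $a_i$ from bond $i$ survives precisely when neither endpoint of bond $i$ is in state $2$, i.e.\ when $i\notin S\cup(S+1)$. (This is just the transfer--matrix formula for the independence polynomial of a cycle.) Comparing the two displays proves the proposition. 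The one delicate point is this last piece of bookkeeping — keeping straight which $a_i$ attaches to which bond of the walk — together with the incompatibility observation that drives the collapse of the inclusion--exclusion sum, which is exactly where the hypothesis $2l\le\min_i a_i$ enters.
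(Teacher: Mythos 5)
Your proposal is correct and follows essentially the same route as the paper: inclusion--exclusion over the shaved-off regions of the cuboid, the observation (using $2l\le\min_i a_i$) that regions attached to cyclically adjacent constraints are disjoint so only matchings/independent sets of the cycle survive, the computation $\int_{a_i-l}^{a_i}(x_i-(a_i-l))\,dx_i=l^2/2$ for each surviving pair, and the transfer-matrix identification of the resulting signed sum with the trace. If anything, you spell out the disjointness argument and the trace bookkeeping in more detail than the paper does.
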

\begin{proof} We shall give a description of the volume of the polytope in terms of the mathings of the cyclic graph on $[s]$. We define a matching to be any subset of edges that are pairwise disjoint, and denote by $\mathcal{M}_k ([s])$ matchings of the cyclic graph on $[s]$ with exactly $k$ edges. We will additionally use the shorthand $i \in M$ to denote when $i \in [s]$ is covered by an edge of a matching $M$.
The chainlink polytope is the rectangular prism $ P=\prod_{i=1}^s [0,a_i]$ with the sets  $ S_{(i,i+1)}:i \in [s]$ shaved off, where
\begin{align*}
    S_{(i,i+1)} = \lbrace x \in P: x_i -x_{i+1} > a_i -l \rbrace.
\end{align*}
We'll think of the indexing of the sets as edges of the cyclic graph on $1, \dots,s$.
By inclusion-exclusion, one gets
\begin{align*}
   \Vol ( \HL( \bar{a}, l ))= a_1 \cdots a_s+ \sum_{k=1}^s \sum_{1 \leq i_1 < i_2< \dots <i_k \leq s} \Vol( \bigcap_{j=1}^k S_{(i_j,i_{j+1})} ) (-1)^k.
\end{align*}
If two edges $e$ and $f$ of the cyclic graph $C_s$ intersect, then the intersection $S_e \cap S_f = \emptyset$. Thus we only need to be concerned with the terms that come from matchings $m \in \mathcal{M}_k (s)$ in the above formula. For a matching $M  \in \mathcal{M}_k (s) $, we have, after permuting the coordinates,
\begin{align*}
    \bigcap_{e \in M} S_e = \prod_{i \not \in M} [0,a_i] \times \prod_{(i,i+1) \in M} \lbrace (x,y) \in \mathbb{R}^{2} : a_{i } \geq x > a_{i} -l , l > y \geq 0, x - y > a_{i} -l \rbrace
\end{align*}
Here, we consider $(i,i+1)$ modulo $s$ as usual. The volume of the above is clearly $ \prod_{i \not \in m} a_i \frac{l^{2k}}{2^k}$, which matches the trace formula.
\end{proof}

\section{Remarks and further work}
There are several questions about these chainlink polytopes that naturally arise. 
\begin{itemize}
\item \textbf{Ehrhart-Equivalence:} Two  rational polytopes $P, Q \in \mathbb{R}^d$ are said to be Ehrhart-Equivalent if they have the same  Ehrhart quasi-polynomial.  They are said to be $GL$ equidecomposable if we may partition $P = U_1 \cup \ldots \cup U_n$ and $Q = V_1 \cup \ldots \cup V_n$ into relatively open simplices such that for each $i$, $U_i$ and $V_i$ are $GL_d(\mathbb{Z})$ equivalent. In \cite{haase2008quasi}, it was conjectured that Ehrhart-equivalent polytopes are $GL$ equidecomposable. This is known to be true for dimensions $2$ \cite{greenberg1993piecewise} and $3$ \cite{erbe2019ehrhart}. Sections of chainlink polytopes provide us with a large class of examples to test this conjecture. 
\item \textbf{Multimodality:} Theorem \ref{unimodality} can be expressed in the following way: Let $\bar{a}$ be a composition of $n$. Then the function from $\{0, \ldots, n\}$ to $\mathbb{N}$ given by 
\[k \rightarrow \# CL^k(\bar{a}, 1),\]
is unimodal save when $\bar{a} = (a, 1, a, 1)$ or $(1, a, 1, a)$  and is bimodal in these cases. If we instead fix a positive integer $l$ such that $2l \leq \operatorname{min} \{a_i\}$ and look at 
\[k \rightarrow \# CL^k(\bar{a}, l),\]
the function may be multimodal. Indeed, we have that when $\bar{a} = (2k, 2k)$ and $l = k$, we seem to have $k+1$ peaks. Can one describe the maximal number of modes that may arise for fixed $l$ and when these are attained? 

\item \textbf{The General Chainlink Polytope:} In the case $ 2l > \min_{i \in [s]} a_i$, most interesting properties of the polytope $ \CL ( \bar{a},l)$ vanish. Namely, the vertices of the sections are no longer half-integral, we lose the  equality of volumes of complementary sections, various formulae for the number of the vertices and the volume of $\CL ( \bar{a}, l)$ no longer hold. Given that the chainlink polytope $ \CL ( \bar{a}, l)$ is full-dimensional when $ l < \min_{i \in [s]} a_i$, this leaves a lot to be investigated, both combinatorially and geometrically.
\end{itemize}

\bibliographystyle{plain}
\bibliography{chainlink}
\end{document}